%% file: sh-128.tex
\documentclass[a4paper]{amsart}

\input{macro}

\usepackage[initials,backrefs,msc-links]{amsrefs}

\title[Degrees of Genus-Two Split-Jacobian Loci]{Degrees of Genus-Two Split-Jacobian Loci and Humbert-Form Reconstruction}
 
\author{T. Shaska}
\address{Department of Mathematics and Statistics, Oakland University, Rochester, MI, 48309, USA}
\email{shaska@oakland.edu}

\subjclass[2020]{Primary 14H40, 11F46; Secondary 11F50, 14H10, 14Q05}
\keywords{genus-two curves, split Jacobians, elliptic subcovers, Humbert surfaces, Siegel modular forms, Jacobi forms, Igusa invariants}

\begin{document}

\begin{abstract}
Let \(H_{n^2}\subset\cA_2\) be the Humbert surface of discriminant \(n^2\),  \(G_{n^2}\) be the Siegel modular form of level one with divisor \(H_{n^2}\), and  \(k(H_{n^2})\) be its weight.
Under the Torelli map, \(H_{n^2}\) is the closure of the locus \(\cL_n\subset\cM_2\) of genus-two curves admitting a maximal degree-\(n\) elliptic subcover, the zero locus of an irreducible weighted-homogeneous \(F_n\in\Z[J_2,J_4,J_6,J_{10}]\).
Let \(\nu(n)=\deg\bigl(X_1(n)\to X(1)\bigr)\).
We prove that the meromorphic form \(F_n(\tau)\) obtained from \(F_n\) has a pole of order exactly \(\nu(n)\) along the product locus, that \(G_{n^2}\) is a constant multiple of \(\chi_{10}^{\,\nu(n)}F_n(\tau)\), and that
\[
\degw F_n \;=\; k(H_{n^2})\;-\;10\,\nu(n)
\]
for every \(n\geq 2\), even or odd.
We determine the restriction of \(G_{n^2}\) to the product locus as a product of modular polynomials, its leading Fourier--Jacobi coefficient as a product of theta functions over the points of exact order \(n\), and we characterize \(G_{n^2}\) up to scalar by vanishing on a single torsion divisor.
These data convert the computation of \(G_{n^2}\) into a linear problem of the prescribed size, which we carry out for \(n=5\), producing the exact Humbert form \(G_{25}\) in the generators $\psi_4,\psi_6,\chi_{10},\chi_{12}$ and the equation \(F_5\) over $\Q$.
\end{abstract}

\maketitle

\setcounter{tocdepth}{2}

\input{main}

\input{sec-7}

\bibliographystyle{amsalpha}
\bibliography{sh-128}

\end{document}

%% file: macro.tex
\usepackage{amsmath,amssymb,amsfonts,amsthm}
\usepackage{mathtools}

\usepackage{tikz}
\usepackage{tikz-cd}
\usetikzlibrary{calc,intersections}

\usepackage[
  colorlinks=true,
  linkcolor=blue,
  citecolor=blue,
  urlcolor=blue
]{hyperref}

\usepackage[nameinlink,noabbrev]{cleveref}

\numberwithin{equation}{section}

\newtheorem{thm}{Theorem}[section]
\newtheorem{prop}[thm]{Proposition}
\newtheorem{lem}[thm]{Lemma}
\newtheorem{cor}[thm]{Corollary}

\newtheorem{rem}[thm]{Remark}

\newtheorem*{thm*}{Theorem}
\newtheorem*{cor*}{Corollary}

\theoremstyle{definition}
\newtheorem{defn}[thm]{Definition}

\crefname{equation}{Eq.}{Eqs.}
\Crefname{equation}{Eq.}{Eqs.}
\crefformat{equation}{Eq.~#2(#1)#3}
\Crefformat{equation}{Eq.~#2(#1)#3}

\crefname{thm}{Thm.}{Thms.}
\Crefname{thm}{Thm.}{Thms.}
\crefname{prop}{Prop.}{Props.}
\Crefname{prop}{Prop.}{Props.}
\crefname{lem}{Lem.}{Lems.}
\Crefname{lem}{Lem.}{Lems.}
\crefname{cor}{Cor.}{Cors.}
\Crefname{cor}{Cor.}{Cors.}
\crefname{rem}{Rem.}{Rems.}
\Crefname{rem}{Rem.}{Rems.}
\crefname{defn}{Def.}{Defs.}
\Crefname{defn}{Def.}{Defs.}
\crefname{exa}{Ex.}{Exs.}
\Crefname{exa}{Ex.}{Exs.}

\crefname{section}{Sec.}{Secs.}
\Crefname{section}{Sec.}{Secs.}
\crefname{subsection}{Sec.}{Secs.}
\Crefname{subsection}{Sec.}{Secs.}

\crefname{table}{Tab.}{Tabs.}
\Crefname{table}{Tab.}{Tabs.}
\crefname{figure}{Fig.}{Figs.}
\Crefname{figure}{Fig.}{Figs.}

\newcommand{\bA}{\mathbb{A}}
\newcommand{\bH}{\mathbb{H}}
\newcommand{\F}{\mathbb{F}}
\newcommand{\Gm}{\mathbb{G}_m}

\DeclareMathOperator{\Sp}{Sp}
\DeclareMathOperator{\Div}{div}
\DeclareMathOperator{\ord}{ord}

\newcommand{\degw}{\deg_w}

\newcommand{\wphi}{\tilde{\phi}}

\newcommand{\bP}{\mathbb P}

\newcommand{\Q}{\mathbb{Q}}
\newcommand{\R}{\mathbb{R}}

\newcommand{\C}{\mathbb{C}}

\newcommand{\cP}{\mathcal{P}}

\newcommand{\Z}{\mathbb{Z}}

\DeclareMathOperator{\Frac}{Frac}
\DeclareMathOperator{\Spec}{Spec}
\DeclareMathOperator{\Pic}{Pic}
\DeclareMathOperator{\End}{End}

\DeclareMathOperator{\diag}{diag}

\DeclareMathOperator{\Aut}{Aut}

\DeclareMathOperator{\SL}{SL}

\DeclareMathOperator{\chara}{char}

\newcommand{\Jac}{\operatorname{Jac}}

\newcommand{\cO}{\mathcal O}
\newcommand{\cA}{\mathcal A}
\newcommand{\cE}{\mathcal E}
\newcommand{\cH}{\mathcal H}
\newcommand{\cL}{\mathcal L}
\newcommand{\cR}{\mathcal R}

\newcommand{\cM}{\mathcal{M}}

\DeclareMathOperator{\rank}{rank}

%% file: main.tex
\section{Introduction}
\label{sec-1}

For an integer \(n\geq 2\), let \(\cL_n\subset\cM_2\) denote the locus of genus two curves admitting a maximal degree \(n\) elliptic subcover.   The Igusa invariants identify \(\cM_2\) with the open subvariety \(\{J_{10}\neq 0\}\) of \(\bP(2,4,6,10)\), and there \(\cL_n\) is the zero set of a single irreducible weighted-homogeneous polynomial \(F_n\in\Z[J_2,J_4,J_6,J_{10}]\).
The first systematic computation of these equations was carried out in \cite{2001-0}: the loci were parametrized by invariants of a group action on the parameters of the defining families, the dihedral invariants, and the equations were obtained by elimination with resultants. This method produced the equations of \(\cL_n\) for $n=2, 3$  in \cite{2000-2}, \cite{2004-2} by this author, and later $n=5$ in \cite{MSV09}.    The case of $n$ even is slightly different and the equation of  \(F_4\)    was computed  in \cite{BD11}  by interpolation.  Kumar in \cite{Kum15}  constructed dominant families of split Jacobians for every \(n\le 11\).
The weighted degrees of such polynomials are  \(\degw F_2=30\),  \(\degw F_3=80\),   \(\degw F_4=180\), and \(\degw F_5=480\).  
A formula for \(\degw F_n\), a description of the support, and a bound on the size of the linear problem remained unavailable before this paper.


Let \(\cA_2\) be the moduli space of principally polarized abelian surfaces and let \(\iota:\cM_2\to\cA_2\) be the Torelli map, sending a curve to its Jacobian with the theta polarization. The closure of \(\iota(\cL_n)\) in \(\cA_2\) is the Humbert surface \(H_{n^2}\). An exact Humbert form \(G_{n^2}\) is a Siegel modular form of level one with divisor exactly \(H_{n^2}\); its weight \(k(H_{n^2})\) is computable for every \(n\) from van der Geer's recursion. Substituting Igusa's invariants in terms of Siegel modular forms  into \(F_n\) produces a meromorphic Siegel modular form \(F_n(\tau)\) whose only pole is along the product locus \(H_1=\{\chi_{10}=0\}\).
Let us  denote  
\(
\deg\bigl(X_1(n)\to X(1)\bigr), 
\)
by
\[
\nu(n)=\deg\bigl(X_1(n)\to X(1)\bigr)=
\begin{cases}
3, & n=2,\\
\tfrac{1}{2}\,\varphi(n)\,\psi(n), & n\geq 3,
\end{cases}
\]
where \(\varphi\) is Euler's function and \(\psi\) is the Dedekind psi function.
The main result of this paper is that the pole of \(F_n(\tau)\) along \(H_1\) has order exactly \(\nu(n)\). In other words,  for every \(n\geq 2\) there is \(c\in\C^\times\) such that
\(
G_{n^2}=c\,\chi_{10}^{\,\nu(n)}F_n(\tau).
\)
The essential point is the identification of the exponent with a torsion count: \(\nu(n)\) is the number of pairs \((E,\pm x)\), where \(x\) is a point of exact order \(n\).
 Since \(G_{n^2}\) has weight \(k(H_{n^2})\), while \(\chi_{10}\) has weight \(10\) and \(F_n(\tau)\) has weight \(\degw F_n\), comparison of weights gives
\[
\operatorname{wt}(G_{n^2})  =k(H_{n^2})   =10\,\nu(n)+\degw F_n,
\]
and hence
\(
\degw F_n=k(H_{n^2})-10\,\nu(n)
\)
for every \(n\geq 2\), even or odd.

The main idea of the proof is as follows. 
Let  \(e_n\) be the pole order of \(F_n(\tau)\) along \(H_1\). A comparison of divisor classes in \(\Pic_{\Q}(\cA_2)=\Q\lambda\) gives \(\degw F_n=k(H_{n^2})-10\,e_n\) and \(G_{n^2}=c\,\chi_{10}^{e_n}F_n(\tau)\) (\cref{thm:closed,cor:exact-form}). Everything then reduces to the determination of \(e_n\). However,   \(e_n\) is a boundary invariant, so it is the index of the first non-vanishing Fourier-Jacobi coefficient of \(G_{n^2}\) (\cref{thm:fj-order}). Humbert's singular relations force every Fourier-Jacobi coefficient to vanish at every torsion point of exact order \(n\), and counting zeros of Jacobi forms gives \(e_n\geq\nu(n)\) (\cref{prop:lower-bound}). For the reverse inequality, the restriction \(g_n=G_{n^2}|_{z=0}\) satisfies \(e_n\leq\ord_{q_2}(g_n)\), and \(g_n/(\Delta\Delta')^{k/12}\) is a polynomial in the two \(j\)-invariants whose zero divisor is the intersection cycle \(H_{n^2}\cdot H_1\), a sum of isogeny curves with forced multiplicities. An elementary identity for the Humbert weights, \(k(H_{n^2})=12\,\nu(n)+12\,\widetilde{R}(n)\) for \(n\geq 3\), with \(\widetilde{R}(n)\) an explicit divisor sum (\cref{thm:weight-identity}), converts the degree count into \(e_n\leq\nu(n)\).

Equality throughout determines the boundary data of \(G_{n^2}\). 
The restriction \(g_n\) is a product of classical modular polynomials
(\cref{cor:boundary-product}), and, for \(n\geq 3\), the leading Fourier-Jacobi coefficient is
a product of theta functions over the torsion points of order \(n\) (\cref{prop:theta}).
These data do not determine \(G_{n^2}\), but a single linear condition along a torsion divisor does. The resulting procedure computes \(F_n\) by linear algebra in spaces of modular forms rather than by elimination.
This is a different computational approach to the equations \(F_n\) other than  elimination methods of \cite{2000-2, 2001-0, 2004-2}.
 
The exponent \(\nu(n)\) counts pairs \((E,\pm x)\) with \(x\) of exact order \(n\), so the branches are indexed by \(X_1(n)\). The count \(\psi(n)\) of cyclic subgroups of order \(n\), indexed by \(X_0(n)\), agrees with \(\nu(n)\) exactly when \(\varphi(n)\leq 2\), that is, for \(n\in\{2,3,4,6\}\); the two differ first at \(n=5\), the smallest \(n\) with \(\varphi(n)=4\).

The procedure is carried out at \(n=5\). In \(\Z[J_2,J_4,J_6,J_{10}]\) the equation \(F_5\) has \(76713\) terms among the \(82189\) weighted-homogeneous candidates of degree \(480\), recorded in \cref{sec-7} and accompanying this article as an ancillary file. 
The number of unknowns is prescribed by the degree formula; the height \( h (G_{n^2})\)
of the primitive modular coefficient vector is bounded only through the interpolation points, by \cref{lem:hadamard}, whose bound at \(n=5\) overshoots the actual height by five orders of magnitude. The former fixes the cost of one solve, which at \(n=7\) requires about two tebibytes, whereas the latter fixes the number of primes and is known only after the modular coefficient vector has been reconstructed.

\Cref{sec-2} collects the preliminaries. \Cref{sec-3} proves the closed formula, the Fourier-Jacobi characterization, and the lower bound. \Cref{sec-4} proves the weight identity. \Cref{sec-5} proves the factorization and the degree formula. \Cref{sec-6} records the boundary data in closed form and the characterization of \(G_{n^2}\) by a single linear condition. \Cref{sec-7} records the procedure that computes \(F_n\) from the boundary data and the computation of \(F_5\), and the arithmetic size of the problem.
\section{Modular and geometric preliminaries}
\label{sec-2}

We write \(\bP(2,4,6,10)\) for the weighted projective space with weights the degrees of the invariants \(J_2,J_4,J_6,J_{10}\), that is, the quotient of \(\bA^4\setminus\{0\}\) by the \(\Gm\)-action \(t\cdot(x_0,x_1,x_2,x_3)=(t^2x_0,t^4x_1,t^6x_2,t^{10}x_3)\), and \(\degw\) for the weighted degree of a weighted-homogeneous polynomial.
A genus two curve \(C\) defined over a field \(K\) of characteristic \(\chara(K)\neq 2\) is a curve with affine equation
\begin{equation}
\label{affine:C}
y^2 = a_6 x^6 + a_5 x^5 + \cdots + a_1 x + a_0.
\end{equation}
Since \(\chara(K)\neq 2\), the ring of even-degree \(\SL_2\)-invariants of binary sextics is generated by the \textbf{Igusa invariants} \(J_2,J_4,J_6,J_{10}\), and these identify \(\cM_2\) with the open subvariety \(\{J_{10}\neq 0\}\) of \(\bP(2,4,6,10)\). We write \(I(C)=[J_2:J_4:J_6:J_{10}]\) for the moduli point of \(C\).

\begin{defn}
\label{defn:subcover}
Let \(C\) be a genus-two curve over \(\C\). An \textbf{elliptic subcover} of degree \(n\) is a finite covering \(\pi:C\to E\) onto an elliptic curve with \(\deg\pi=n\). It is \textbf{maximal} if it does not factor as \(C\to E'\to E\) with \(E'\to E\) an isogeny of degree larger than one. For \(n\geq 2\) set
\[
\cL_n=\bigl\{\,[C]\in\cM_2 \;:\; C\ \text{admits a maximal degree-}n\ \text{elliptic subcover}\,\bigr\}.
\]
\end{defn}

\subsection{Modular curves}
\label{subsec:modular-curves}

Let \(X(1)\) be the modular curve of elliptic curves, the compactification of \(\bH/\SL_2(\Z)\), with coordinate the absolute invariant \(j\). For \(n\geq 2\), the curve \(X_0(n)\) parametrizes pairs \((E,G)\) of an elliptic curve with a cyclic subgroup of order \(n\), and the curve \(X_1(n)\) parametrizes pairs \((E,\pm x)\) of an elliptic curve with a point \(x\) of exact order \(n\), taken up to inversion since \(-1\in\Aut(E)\). The degrees of the natural coverings to \(X(1)\) are
\[
\deg\bigl(X_0(n)\to X(1)\bigr)=\psi(n),\qquad 
\deg\bigl(X_1(n)\to X(1)\bigr)=\nu(n),
\]
where \(\psi(n)=n\prod_{p\mid n}(1+\tfrac{1}{p})\) is the Dedekind psi function and
\[
\nu(n)=
\begin{cases}
3, & n=2,\\
\tfrac{1}{2}\,\varphi(n)\,\psi(n), & n\geq 3,
\end{cases}
\]
with \(\varphi\) Euler's function. Indeed
\[
[\SL_2(\Z):\Gamma_1(n)]=n^2\prod_{p\mid n}(1-p^{-2})=\varphi(n)\psi(n),
\]
which is the number of points of exact order \(n\) on an elliptic curve, and \(-I\in\Gamma_1(n)\) if and only if \(n\leq 2\). One has \(\nu(n)=\psi(n)\) if and only if \(\varphi(n)\leq 2\), that is, for \(n\in\{2,3,4,6\}\).

For \(M\geq 1\) let \(T_M\subset X(1)\times X(1)\) be the modular curve of cyclic \(M\)-isogenies in the coordinates \((j_1,j_2)\): the image of \(\tau_1\mapsto(j(\tau_1),j(P\tau_1))\) for any primitive integer matrix \(P\) with \(\det P=M\). It is an irreducible plane curve of degree \(\psi(M)\) in each variable, cut out by the 
\textbf{classical modular polynomial} \(\Phi_M(j_1,j_2)\), and \(T_M\neq T_{M'}\) for \(M\neq M'\).

\subsection{Modular forms}

Let \(\bH_2\) be the Siegel upper half-space of degree two, with coordinates
\[
\tau=\begin{pmatrix}\tau_1 & z\\ z & \tau_2\end{pmatrix},\qquad q_1=e^{2\pi i\tau_1},\qquad q_2=e^{2\pi i\tau_2},
\]
and let \(\cA_2=\bH_2/\Sp_4(\Z)\) be the moduli space of principally polarized abelian surfaces, \(\iota:\cM_2\to\cA_2\) the Torelli map, and \(\lambda\) the Hodge line bundle. Every principally polarized abelian surface is either the Jacobian of a smooth genus-two curve or a product of two elliptic curves with the product polarization (see \cite{sh-93}), so \(\iota\) is injective with complement the product locus. We use only one fact about the Picard group: \(\Pic_{\Q}(\cA_2)=\Q\lambda\), and a meromorphic Siegel modular form of weight \(w\) has divisor class \(w\lambda\).

Let \(M_k=M_k(\Sp_4(\Z))\) be the space of scalar Siegel modular forms of weight \(k\) and level one, and let \(\cE_w=M_w(\SL_2(\Z))\) be the space of elliptic modular forms of weight \(w\) and level one. Igusa's structure theorem states that the even-weight ring
\[
M^{\mathrm{even}}:=\bigoplus_{k\ \mathrm{even}} M_k=\C[\psi_4,\psi_6,\chi_{10},\chi_{12}]
\]
is a free polynomial algebra, where \(\psi_4,\psi_6\) are the Eisenstein series of weights \(4,6\) and \(\chi_{10},\chi_{12}\) are cusp forms of weights \(10,12\); the full ring is generated over it by the odd cusp form \(\chi_{35}\); see \cite{Igusa62,Igusa67}.

Igusa expressed the invariants of the sextic in \cref{affine:C} through these generators, for the period matrix \(\tau\) of \(\Jac(C)\): up to nonzero rational constants, \(I_2=\chi_{12}/\chi_{10}\), \(I_4=\psi_4\), \(I_{10}=\chi_{10}\), and \(I_6\) is a linear combination of \(\psi_6\) and \(\psi_4\chi_{12}/\chi_{10}\); see\cite{Igusa67}.   The constants in the present normalization being written out in \cite{2016-3}. 
The only polar divisor of these coordinates is the product locus \(H_1=\{\chi_{10}=0\}\), so \(\iota(\cM_2)=\cA_2\setminus H_1\), and \(\Div(\chi_{10})=H_1\) with \([H_1]=10\lambda\).

Every \(F\in M_k\) has a Fourier-Jacobi expansion
\[
F(\tau)=\sum_{m\geq 0}\phi_m(\tau_1,z)\,q_2^m,
\]
in which \(\phi_m\) is a Jacobi form of weight \(k\) and index \(m\) for the Jacobi group \(\SL_2(\Z)\ltimes\Z^2\); see \cite{EichlerZagier}. We write \(\ord_{q_2}(F)\) for the smallest \(m\) with \(\phi_m\neq 0\). The constant coefficient \(\phi_0\) is independent of \(z\) and equals the image of \(F\) under the Siegel operator; which sends \(\psi_4,\psi_6\) to \(E_4,E_6\) and annihilates \(\chi_{10}\) and \(\chi_{12}\). The index-one Fourier-Jacobi coefficients of \(\chi_{10}\) and \(\chi_{12}\) are nonzero constant multiples of the Jacobi cusp forms \(\phi_{10,1}\) and \(\phi_{12,1}\) of Eichler--Zagier; indeed \(\chi_{10}\) and \(\chi_{12}\) are the Maass lifts of \(\phi_{10,1}\) and \(\phi_{12,1}\); see \cite[Ch.~6]{EichlerZagier}. Hence
\[
\ord_{q_2}(\psi_4)=\ord_{q_2}(\psi_6)=0,\quad    
\ord_{q_2}(\chi_{10})=\ord_{q_2}(\chi_{12})=1.
\]
A \textbf{Jacobi form} of weight \(k\) and index \(m\) for \(\SL_2(\Z)\ltimes\Z^2\) is a holomorphic function \(\phi:\bH\times\C\to\C\) satisfying the modular law for every
\(\begin{pmatrix}a&b\\ c&d\end{pmatrix}\in\SL_2(\Z)\),
\[
\phi\Bigl(\frac{a\tau_1+b}{c\tau_1+d},\,\frac{z}{c\tau_1+d}\Bigr)=(c\tau_1+d)^k\,e^{2\pi imcz^2/(c\tau_1+d)}\,\phi(\tau_1,z),
\]
the elliptic law
\[
\phi(\tau_1,z+\ell\tau_1+\ell')=e^{-2\pi im(\ell^2\tau_1+2\ell z)}\,\phi(\tau_1,z),\qquad \ell,\ell'\in\Z,
\]
and admitting a Fourier expansion
\[
\phi=\sum_{N,r}c(N,r)\,q_1^N\zeta^r
\]
with \(\zeta=e^{2\pi iz}\) and \(c(N,r)=0\) unless \(4Nm\geq r^2\); see \cite{EichlerZagier}. It is a cusp form if \(c(N,r)=0\) unless \(4Nm>r^2\).

We use two further standard facts from \cite{EichlerZagier}. First, for fixed \(\tau_1\) the function \(z\mapsto\phi(\tau_1,z)\), if not identically zero, has exactly \(2m\) zeros in a fundamental domain for \(\C/\Lambda_{\tau_1}\), \(\Lambda_{\tau_1}=\Z\tau_1+\Z\), counted with multiplicity \cite[Thm.~1.2]{EichlerZagier}. Second, for even \(k\) one has \(\phi(\tau_1,-z)=\phi(\tau_1,z)\).

We record one consequence for the weak Jacobi generator \(\wphi_{-2,1}\) of Eichler and Zagier \cite[Thm.~9.3]{EichlerZagier}. The restriction \(\wphi_{-2,1}(\tau_1,0)\) is a holomorphic modular form of weight \(-2\), hence zero; the weight being even, the vanishing order at \(z=0\) is at least \(2\), and an index-one Jacobi form has exactly two zeros \cite[Thm.~1.2]{EichlerZagier}. Hence the divisor of \(z\mapsto\wphi_{-2,1}(\tau_1,z)\) on \(\C/\Lambda_{\tau_1}\) is \(2\cdot(0)\).

\begin{lem}
\label{lem:2torsion-parity}
Let \(\phi\) be a Jacobi form of even weight and index \(m\in\Z\), and let \(z_0=(\ell\tau_1+\ell')/2\) with \(\ell,\ell'\in\{0,1\}\), \((\ell,\ell')\neq(0,0)\), be a nontrivial 2-torsion point. Then the vanishing order of \(z\mapsto\phi(\tau_1,z)\) at \(z_0\) is even.
\end{lem}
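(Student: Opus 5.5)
The plan is to show that \(z\mapsto\phi(\tau_1,z)\) is, up to a nowhere-vanishing factor, an even function of \(z-z_0\). Evenness of the weight gives \(\phi(\tau_1,-z)=\phi(\tau_1,z)\). The elliptic law then relates the values at \(z_0+w\) and \(z_0-w\), because \(-(z_0+w)=z_0-w-(\ell\tau_1+\ell')\).

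Concretely, fix \(\tau_1\) and set \(f(w)=\phi(\tau_1,z_0+w)\). By evenness, \(f(-w)=\phi(\tau_1,-z_0+w)\). Since \(-z_0+w=z_0+w-(\ell\tau_1+\ell')\), the elliptic law with shift \((-\ell,-\ell')\) gives
\[
f(-w)=e^{-2\pi i m(\ell^2\tau_1-2\ell(z_0+w-\ell\tau_1-\ell'))}\,\phi(\tau_1,z_0+w)\cdot(\text{check sign conventions}),
\]
so \(f(-w)=c\,e^{a w}f(w)\) with constants \(c\neq 0\) and \(a=4\pi i m\ell\) (up to sign). I will carry out this exponent computation carefully, but only its shape matters: the factor is \(c\,e^{aw}\) with \(a\) linear in \(m\ell\).

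Next define \(g(w)=e^{a w/2}f(w)\). Then
\[
g(-w)=e^{-aw/2}f(-w)=c\,e^{aw/2}f(w)=c\,g(w).
\]
Applying this twice gives \(c^2=1\), assuming \(g\not\equiv0\); if \(\phi(\tau_1,\cdot)\equiv 0\) there is nothing to prove, since the order is then conventionally infinite or the statement is vacuous. So \(g\) is either even or odd in \(w\).

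Since \(e^{aw/2}\) does not vanish, \(\ord_{w=0}f=\ord_{w=0}g\). If \(g\) is even, its order at \(0\) is even. If \(c=-1\), then \(g\) is odd, its order at \(0\) is odd, and the lemma would fail. The main obstacle is therefore to show that \(c=+1\). For that I will compute \(c\) exactly. Substituting \(z_0=(\ell\tau_1+\ell')/2\), the constant term of the exponent becomes \(-2\pi i m(\ell^2\tau_1-2\ell(z_0-\ell\tau_1-\ell'))\) evaluated at \(w=0\). This reduces to \(-2\pi i m(2\ell^2\tau_1\cdot\text{stuff}+\ell\ell'\cdot\text{integer})\), and the \(\tau_1\)-terms must cancel against the normalization in \(e^{aw/2}\).

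It is cleaner to use the symmetric normalization \(g(w)=e^{2\pi i m\ell w}\,\phi(\tau_1,z_0+w)\) together with the corresponding relation derived directly. I expect to find \(c=(-1)^{2m\ell\ell'}\cdot 1=1\), since \(m\ell\ell'\in\Z\) makes any residual factor of the form \(e^{2\pi i m\ell\ell'}\) trivial. Once \(c=1\), \(g\) is even, so the vanishing order of \(\phi(\tau_1,\cdot)\) at \(z_0\) is even.
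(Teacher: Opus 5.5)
Your route is the paper's: evenness in \(z\) plus one application of the elliptic law gives \(\phi(\tau_1,z_0-w)=u(w)\,\phi(\tau_1,z_0+w)\) with \(u\) nowhere zero, and everything hinges on \(u(0)=+1\). That is exactly the step you leave as an expectation, and your sketch of it is wrong.

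\begin{itemize}
\item Your displayed exponent \(-2\pi i m\bigl(\ell^2\tau_1-2\ell(z_0+w-\ell\tau_1-\ell')\bigr)\) evaluates the multiplier for the shift \((-\ell,-\ell')\) at the image point \(-z_0+w\) instead of at \(z_0+w\). At \(w=0\), \(\ell=1\) it equals \(-2\pi i m(2\tau_1+\ell')\), which leaves a spurious factor \(e^{-4\pi i m\tau_1}\).
\item Your suggestion that the \(\tau_1\)-terms ``cancel against the normalization in \(e^{aw/2}\)'' cannot work, since \(e^{aw/2}\) equals \(1\) at \(w=0\) and does not affect \(c=u(0)\).
\item The relation \(c^2=1\) only gives \(c=\pm1\). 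As you note yourself, \(c=-1\) would make the order odd.
\end{itemize}

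So as written, the proof stops short of its only substantive step.

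The missing computation is one line. Apply the elliptic law at \(z=z_0+w\) with shift \((-\ell,-\ell')\):
\[
\phi(\tau_1,-z_0+w)=e^{-2\pi i m(\ell^2\tau_1-2\ell(z_0+w))}\,\phi(\tau_1,z_0+w).
\]
Since \(2z_0=\ell\tau_1+\ell'\), one has \(\ell^2\tau_1-2\ell z_0=-\ell\ell'\), so the \(\tau_1\)-terms cancel on their own. Hence \(f(-w)=e^{2\pi i m\ell\ell'}e^{4\pi i m\ell w}f(w)\), that is, \(c=e^{2\pi i m\ell\ell'}=1\) because \(m\ell\ell'\in\Z\), and \(a=4\pi i m\ell\).

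With this in hand, the auxiliary \(g\) and the \(c^2=1\) step are unnecessary. Write \(f(w)=\alpha_t w^t+O(w^{t+1})\) with \(\alpha_t\neq 0\), and compare \(w^t\)-coefficients in \(f(-w)=c\,e^{aw}f(w)\). This gives \((-1)^t=c=1\), which is precisely how the paper concludes.
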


\begin{proof}
Write \(2z_0=\ell\tau_1+\ell'\) and let \(w\) be near \(0\). Evenness in \(z\), which holds
since the weight is even, gives
\(
\phi(\tau_1,z_0-w)=\phi(\tau_1,-z_0+w).
\)
The argument on the right may be rewritten using \(-z_0=(z_0+w)-2z_0-w\), that is,
\[
-z_0+w=(z_0+w)-2z_0=(z_0+w)-(\ell\tau_1+\ell'),
\]
so it differs from \(z_0+w\) by the lattice translation \(-\ell\tau_1-\ell'\). The elliptic
law with \((\ell,\ell')\) replaced by \((-\ell,-\ell')\), applied at \(z=z_0+w\), therefore gives
\[
\phi(\tau_1,z_0-w)=u(\tau_1,w)\,\phi(\tau_1,z_0+w),
\qquad
u(\tau_1,w)=e^{-2\pi im(\ell^2\tau_1-2\ell(z_0+w))},
\]
with \(u\) nowhere vanishing. Substituting \(2z_0=\ell\tau_1+\ell'\) at \(w=0\) cancels the
\(\tau_1\)-terms and gives \(u(\tau_1,0)=e^{2\pi im\ell\ell'}=1\), since \(m\ell\ell'\in\Z\). If
\(t\) denotes the vanishing order at \(z_0\), comparing the coefficients of \(w^t\) on both
sides yields \((-1)^t=1\).
\end{proof}

\begin{lem}
\label{lem:algindep}
Functions \(E_4\), \(E_6\), \(\phi_{10,1}\), \(\phi_{12,1}\) are algebraically independent over \(\C\).
\end{lem}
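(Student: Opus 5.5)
We reduce the statement to the freeness of Igusa's ring \(M^{\mathrm{even}}=\C[\psi_4,\psi_6,\chi_{10},\chi_{12}]\) by passing to Fourier--Jacobi coefficients. All four functions live on \(\bH\times\C\) with coordinates \((\tau_1,z)\). The plan is to argue by contradiction. Suppose \(P(E_4,E_6,\phi_{10,1},\phi_{12,1})=0\) for a nonzero polynomial \(P\in\C[X_4,X_6,X_{10},X_{12}]\). The relation is an identity of holomorphic functions of \((\tau_1,z)\). Under the action of the Jacobi group, \(E_4,E_6\) have index \(0\), \(\phi_{10,1},\phi_{12,1}\) have index \(1\), and the weights are \(4,6,10,12\). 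Since the automorphy factors \((c\tau_1+d)^k e^{2\pi i m c z^2/(c\tau_1+d)}\) for distinct \((k,m)\) are independent characters, the standard argument lets us assume \(P\) is bihomogeneous. In that argument one applies the group and separates graded pieces; equivalently, one uses that monomials of distinct bidegree cannot cancel, since for fixed \((\tau_1,z)\) one varies \(\gamma\) and gets an identity \(\sum_{(k,m)} (c\tau_1+d)^k e(\cdots)^m P_{k,m}=0\). So \(P\) has weight \(k\) and total degree \(m\) in \((X_{10},X_{12})\).

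Next, we lift to Siegel modular forms. Consider \(G:=P(\psi_4,\psi_6,\chi_{10},\chi_{12})\in M_k\). By freeness of Igusa's ring, \(G\neq 0\). We then compute its Fourier--Jacobi expansion. We have \(\psi_4=E_4+O(q_2)\) and \(\psi_6=E_6+O(q_2)\). By the facts recalled in the preliminaries, \(\chi_{10}=c_{10}\phi_{10,1}q_2+O(q_2^2)\) and \(\chi_{12}=c_{12}\phi_{12,1}q_2+O(q_2^2)\) with \(c_{10},c_{12}\neq 0\). Hence the coefficient of \(q_2^m\) in \(G\) is \(P(E_4,E_6,c_{10}\phi_{10,1},c_{12}\phi_{12,1})\). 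After rescaling the variables \(X_{10},X_{12}\) (which preserves the hypothesis up to replacing \(P\) by another nonzero bihomogeneous polynomial), this coefficient vanishes. So \(\phi_m(G)=0\), and moreover all lower coefficients \(\phi_{m'}\), \(m'<m\), vanish because every monomial contains \(q_2^m\).

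This alone gives no contradiction, so the real argument must be different. The cleaner route is to apply the Fourier--Jacobi setup to the polynomial algebra structure directly. Write \(P=\sum_{i+j=m}a_{ij}(X_4,X_6)X_{10}^iX_{12}^j\). Dividing the relation by \(\phi_{10,1}^m\) gives a polynomial relation in \(t=\phi_{12,1}/\phi_{10,1}\) with coefficients in \(\C[E_4,E_6]\). Now \(t\) is a meromorphic Jacobi function of weight \(2\) and index \(0\), and it is not a function of \(\tau_1\) alone. Indeed, by \cite[Thm.~9.3]{EichlerZagier}, \(\phi_{12,1}/\phi_{10,1}=\wphi_{0,1}/\wphi_{-2,1}\cdot(\text{const})\) up to adding a multiple of \(E_2\)-free modular terms. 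More precisely, \(\phi_{10,1}=\Delta\wphi_{-2,1}\) up to a constant, and \(\phi_{12,1}=\Delta(a\wphi_{0,1}+bE_4\cdot 0)\)-type combinations, i.e. \(\phi_{12,1}=\tfrac{1}{12}\Delta\wphi_{0,1}\) up to constants. Hence \(t\) has a double pole at \(z=0\) in each fiber, by the divisor \(2\cdot(0)\) of \(\wphi_{-2,1}\) recorded above, while \(\wphi_{0,1}(\tau_1,0)=12\neq 0\).

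The main step is then the following. Fix generic \(\tau_1\); the coefficients \(a_{ij}(E_4,E_6)\) become constants \(\alpha_j\) (after setting \(i=m-j\)). The relation says that the nonconstant elliptic function \(z\mapsto t(\tau_1,z)\), which has a pole, satisfies \(\sum_j \alpha_j t^j=0\). Hence all \(\alpha_j(\tau_1)=0\) for generic \(\tau_1\), so each \(a_{ij}(E_4,E_6)\equiv 0\). Finally, \(E_4,E_6\) are algebraically independent, by the classical structure of \(M_*(\SL_2(\Z))\), so each \(a_{ij}=0\) and \(P=0\), a contradiction. The expected obstacle is the identification of \(\phi_{10,1},\phi_{12,1}\) with \(\Delta\wphi_{-2,1},\Delta\wphi_{0,1}\) up to constants, which we take from \cite[Thm.~9.3]{EichlerZagier}. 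The ratio \(t\) is nonconstant in \(z\) because \(\wphi_{0,1}(\tau_1,0)=12\) while \(\wphi_{-2,1}\) vanishes there. This makes the Lifting detour unnecessary, and the bihomogeneity reduction is handled by grading by index and weight as described.
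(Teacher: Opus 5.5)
Your final argument is correct, but it takes a different route from the paper. The paper uses \cite[Thm.~9.3]{EichlerZagier} in full strength. The weak Jacobi forms of even weight form the free algebra \(\C[E_4,E_6,\wphi_{-2,1},\wphi_{0,1}]\), so these four functions are algebraically independent. Since \(\Delta\) is a nonzero element of \(\C[E_4,E_6]\), the field they generate equals \(\C(E_4,E_6,\phi_{10,1},\phi_{12,1})\), and four generators of a field of transcendence degree four are independent.

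You take from that theorem only three facts: the identifications \(\phi_{10,1}=c\,\Delta\wphi_{-2,1}\) and \(\phi_{12,1}=c'\,\Delta\wphi_{0,1}\), and the behaviour of \(\wphi_{-2,1}\), \(\wphi_{0,1}\) at \(z=0\). From these you reprove the independence directly:
\begin{itemize}
\item reduce to an index-homogeneous relation;
\item dehomogenize in \(t=\phi_{12,1}/\phi_{10,1}\), where \(\Delta\) cancels, so no field-theoretic step is needed;
\item observe that \(t(\tau_1,\cdot)\) is an elliptic function with a double pole at \(0\) (a constant multiple of \(\wp\)), so it cannot be a root of a nonzero polynomial whose coefficients are constant in \(z\);
\item finish with the independence of \(E_4\) and \(E_6\).
\end{itemize}
Your version is more self-contained and shows where the independence comes from: the ratio of the two index-one forms genuinely depends on \(z\). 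The paper's version is two lines, but it hides the grading argument inside the word ``free''.

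To turn the proposal into a clean proof, make these changes.
\begin{itemize}
\item Delete the opening claim that you reduce to the freeness of Igusa's ring, and delete the whole Siegel-lift paragraph. As you noticed, it yields nothing. It also could only be completed using the statement you are proving, since the paper uses this lemma exactly to compute \(q_2\)-orders of monomials in \(\psi_4,\psi_6,\chi_{10},\chi_{12}\).
\item Replace the garbled sentence about ``\(E_2\)-free modular terms'' and ``\(bE_4\cdot 0\)-type combinations'' with the plain statement \(\phi_{10,1}=c\,\Delta\wphi_{-2,1}\), \(\phi_{12,1}=c'\,\Delta\wphi_{0,1}\), where \(c,c'\neq 0\).
\item Drop the weight grading. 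Your main step uses only index-homogeneity: once \(\tau_1\) is fixed, the coefficients \(a_{ij}(E_4,E_6)\) are constants whatever their weights.
\item Fix the index reduction. The automorphy factors are cocycles, not ``independent characters''. The clean justification is the elliptic law. If \(\sum_m G_m\equiv 0\) with \(G_m\) of index \(m\), replacing \(z\) by \(z+\ell\tau_1\) gives \(\sum_m x_\ell^{\,m}\,G_m(\tau_1,z)=0\), where \(x_\ell=e^{-2\pi i(\ell^2\tau_1+2\ell z)}\). These values are pairwise distinct for \((\tau_1,z)\) off a countable union of proper analytic subsets. A Vandermonde argument then gives \(G_m(\tau_1,z)=0\) on a dense set, hence \(G_m\equiv 0\).
\end{itemize}
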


\begin{proof}
By the structure theorem for weak Jacobi forms of even weight
\cite[Thm.~9.3]{EichlerZagier}, the ring of such forms is the free polynomial algebra
\(\C[E_4,E_6,\wphi_{-2,1},\wphi_{0,1}]\), and \(\phi_{10,1}\), \(\phi_{12,1}\) are nonzero
constant multiples of \(\Delta\wphi_{-2,1}\), \(\Delta\wphi_{0,1}\), where
\(\Delta=(E_4^3-E_6^2)/1728\). The four generators are therefore algebraically independent,
so the field \(\C(E_4,E_6,\wphi_{-2,1},\wphi_{0,1})\) has transcendence degree four over \(\C\).
Since \(\Delta\) lies in \(\C[E_4,E_6]\) and is nonzero, it is invertible in that field, whence
\[
\C\bigl(E_4,E_6,\phi_{10,1},\phi_{12,1}\bigr)=\C\bigl(E_4,E_6,\wphi_{-2,1},\wphi_{0,1}\bigr).
\]
Four elements generating a field of transcendence degree four are algebraically independent.
\end{proof}

\subsection{Humbert surfaces}

For \(D\equiv 0,1\pmod 4\), \(D>0\), let \(H_D\subset\cA_2\) be the Humbert surface of discriminant \(D\); it is an irreducible divisor, by Humbert's theorem \cite{Humbert1899}, see \cite[Ch.~IX]{HvdG}. 
The preimage of \(H_D\) in \(\bH_2\) is the union, over all primitive quintuples \((a,b,c,d,e)\in\Z^5\) with \(b^2-4ac-4de=D\), of the singular relations
\begin{equation}
\label{eq:singular-relation}
a\tau_1+bz+c\tau_2+d(z^2-\tau_1\tau_2)+e=0,
\end{equation}
and \(\Sp_4(\Z)\) acts transitively on the primitive quintuples of a fixed discriminant.
The quintuple \((0,1,0,0,0)\) gives the relation \(z=0\), so \(H_1\) is the product locus.

For every \(n\geq 2\), with no parity assumption on \(n\), a curve \(C\) lies in \(\cL_n\) if and only if \(\iota(C)\in H_{n^2}\), and
\[
\iota(\cL_n)=H_{n^2}\cap\iota(\cM_2)
\]
is Zariski open and dense in \(H_{n^2}\); see  \cite{MSV09}. In particular \(\cL_n\) is a closed irreducible subvariety of \(\cM_2\) of dimension two for every \(n\geq 2\), even or odd: it is the preimage of \(H_{n^2}\) under \(\iota\), and it is open and dense in the irreducible surface \(H_{n^2}\). 
The complement \(H_{n^2}\setminus\iota(\cL_n)=H_{n^2}\cap H_1\) is a curve of products of isogenous elliptic curves; its image in the coordinates \((j_1,j_2)\) is a union of the modular curves \(T_M\) of \cref{subsec:modular-curves}; see \cite{Kani97} for details.

The closure of \(\cL_n\) in \(\bP(2,4,6,10)\) is an irreducible surface, hence a prime Weil divisor. The divisor class group of \(\bP(2,4,6,10)\) is generated by \(\cO(1)\), so this closure is the zero set of a single irreducible weighted-homogeneous polynomial, unique up to a nonzero scalar. The locus is defined over \(\Q\) \cite{Kani97}, so this polynomial may be taken primitive. It is unique up to multiplication by a sign.
We denote it by  \(F_n\in\Z[J_2,J_4,J_6,J_{10}]\) and will call it the \textbf{equation} of \(\cL_n\). 

An \textbf{exact Humbert form} for \(D\) is a scalar Siegel modular form \(G_D\) of level one with \(\Div(G_D)=H_D\); its weight is denoted \(k(H_D)\). Such a form exists for every \(D\) \cite{HvdG,Gruenewald}, and it is unique up to a nonzero scalar, since the ratio of two forms with the same divisor is holomorphic of weight zero together with its inverse. For \(D=1\) one has \(G_1=\chi_{10}\) and \(k(H_1)=10\). The weight \(k(H_D)\) is even for every \(D\): the element \(\diag(1,-1,1,-1)\in\Sp_4(\Z)\) maps \((\tau_1,z,\tau_2)\) to \((\tau_1,-z,\tau_2)\) with automorphy factor \((-1)^k\), so a form of odd weight vanishes on \(\{z=0\}\), its divisor contains \(H_1\), and this contradicts \(\Div(G_D)=H_D\) for \(D>1\).

By van der Geer's theorem on the Humbert generating series \cite{HvdG}, the weights satisfy
\[
\sum_{m\mid n} v(m^2)\,k(H_{m^2})=\tfrac{1}{2}\,a_{n^2},\qquad v(m^2)=\begin{cases}\tfrac12,&m\in\{1,2\},\\ 1,&m\geq 3,\end{cases}
\]
where \(\sigma_1(N)=\sum_{d\mid N}d\) and \(a_D\) is the \(D\)-th Fourier coefficient of \(120\,\cH_{5/2}\), the Cohen--Eisenstein series of weight \(5/2\) \cite{Cohen75}, given by Siegel's formula
\[
a_D=24\sum_{\substack{x\in\Z\\ (D-x^2)/4\in\Z_{>0}}}\sigma_1\Bigl(\frac{D-x^2}{4}\Bigr)+
\begin{cases}
12D-2,&D\text{ a square},\\
0,&\text{otherwise}.
\end{cases}
\]
Together with \(k(H_1)=10\), the recursion determines \(k(H_{n^2})\) for every \(n\). 

\section{The Humbert form along the product locus}
\label{sec-3}


Let \(F_n\in\Z[J_2,J_4,J_6,J_{10}]\) be the irreducible equation of \(\cL_n\) and let \(F_n(\tau)\) denote the meromorphic Siegel modular form of weight \(\degw F_n\) obtained by the substitution of \cref{sec-2}.  Define the order of the polar divisor as 
\begin{equation}
\label{defn:en}
e_n:=-\ord_{H_1}\bigl(F_n(\tau)\bigr)\in\Z.
\end{equation}

\begin{thm}
\label{thm:closed}
For every \(n\geq 2\),
\[
\degw F_n \;=\; k(H_{n^2})\;-\;10\,e_n.
\]
\end{thm}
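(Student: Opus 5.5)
The plan is to compare divisors of the meromorphic Siegel modular form \(F_n(\tau)\) on \(\cA_2\) and then read off weights using \(\Pic_{\Q}(\cA_2)=\Q\lambda\).

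First I determine the divisor of \(F_n(\tau)\) away from \(H_1\). On \(\cA_2\setminus H_1=\iota(\cM_2)\), the Igusa coordinates identify this open set with \(\{J_{10}\neq 0\}\subset\bP(2,4,6,10)\), up to the finite quotient singularities. Since \(F_n\) is irreducible and its zero set there is \(\cL_n\), the zero divisor of \(F_n(\tau)\) on \(\cA_2\setminus H_1\) is \(\iota(\cL_n)=H_{n^2}\setminus H_1\) with some positive multiplicity \(m\). I expect \(m=1\): \(F_n\) is reduced, and the map from \(\bH_2\) to the affine cone is étale at a generic point of \(H_{n^2}\). To justify this, I would use a generic point of \(H_{n^2}\) whose stabilizer is just \(\pm 1\), and note that the substitution \(J_i\mapsto\) modular expressions is a local isomorphism of the weighted cone there. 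This multiplicity check is the main obstacle I see. If it resists, I would instead compare with \(G_{n^2}\): the ratio \(F_n(\tau)/G_{n^2}^{m}\) has no zeros or poles off \(H_1\), and since the irreducible \(F_n\) is not a power in the function field of the weighted space, \(m=1\).

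Next I treat \(H_1\). Its only other contribution is \(H_1\) itself, with order \(-e_n\) by definition \cref{defn:en}. Here \(H_{n^2}\neq H_1\) because \(n\geq 2\), so
\[
\Div\bigl(F_n(\tau)\bigr)=H_{n^2}-e_n H_1 .
\]
Taking classes in \(\Pic_{\Q}(\cA_2)=\Q\lambda\), the left side is \((\degw F_n)\lambda\). On the right, \([H_{n^2}]=k(H_{n^2})\lambda\) since \(G_{n^2}\) has divisor \(H_{n^2}\), and \([H_1]=10\lambda\). Since \(\lambda\) is nontorsion (its multiples are the weights of nonzero forms, e.g. \(\psi_4\)), comparing coefficients yields \(\degw F_n=k(H_{n^2})-10e_n\).

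Alternatively, the whole argument can be phrased without the Picard group. The quotient \(\chi_{10}^{e_n}F_n(\tau)/G_{n^2}\) is a meromorphic modular form with trivial divisor, hence a holomorphic nowhere-vanishing form of weight \(10e_n+\degw F_n-k(H_{n^2})\). By the Koecher principle such a form is a nonzero constant of weight \(0\), which gives the formula and also the corollary \(G_{n^2}=c\,\chi_{10}^{e_n}F_n(\tau)\).
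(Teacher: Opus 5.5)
Your strategy is the paper's: show \(\Div\bigl(F_n(\tau)\bigr)=H_{n^2}-e_nH_1\), then compare classes in \(\Pic_{\Q}(\cA_2)=\Q\lambda\).

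\textbf{The multiplicity-one step.} This is where you diverge, and your primary argument cannot be carried out at \(n=2\). Every curve in \(\cL_2\) has an extra (elliptic) involution, so no point of \(H_4\) has stabilizer \(\{\pm 1\}\). The preimage of \(H_4\) in \(\bH_2\) is fixed pointwise by reflections; for example, the swap fixes \(\{\tau_1=\tau_2\}\). So \(\bH_2\to\cA_2\) is ramified along it, and the \'etaleness you invoke is false there.

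It is also unnecessary. Divisors of modular forms in the paper are Weil divisors on the coarse space \(\cA_2\). That is the convention in which \(\Div(\chi_{10})=H_1\), even though \(\chi_{10}\) vanishes to order two along \(z=0\) in \(\bH_2\). Moreover, \(\iota\) identifies \(\cA_2\setminus H_1\) with \(\{J_{10}\neq 0\}\subset\bP(2,4,6,10)\) as varieties, with no qualification about quotient singularities. Hence \(m\) is simply the multiplicity of \(V(F_n)\) in the weighted projective space. The paper computes it on the chart \(\{J_2\neq 0\}\cong\bA^3\), which is affine space because all weights are even. There the dehomogenization of \(F_n\) stays irreducible and generates a prime ideal, so \(m=1\) for every \(n\).

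\textbf{Your fallback.} It does work for every \(n\), but ``not a power in the function field'' must be replaced by a unique-factorization argument. That argument needs two inputs you left implicit.
\begin{enumerate}
\item A meromorphic form with divisor supported on \(H_1\) is \(c\,\chi_{10}^{r}\); your own Koecher argument gives this. Hence \(F_n(\tau)=c\,\chi_{10}^{r}G_{n^2}^{m}\).
\item \(G_{n^2}\) has even weight, so it lies in \(\C[\psi_4,\psi_6,\chi_{10},\chi_{12}]\subset\C[T,\psi_4,\psi_6,\chi_{10}]\). This is the image of \(\C[J_2,J_4,J_6,J_{10}]\) under the isomorphism \(\Theta\) of \cref{lem:igusa-relations}.
\end{enumerate}
Pulling back gives \(F_n=c'J_{10}^{r}P^{m}\) in \(\C[J_2,J_4,J_6,J_{10}][J_{10}^{-1}]\). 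Since \(J_{10}\nmid F_n\), this forces \(F_n=c'Q^{m}\) with \(Q\) a nonconstant polynomial, and irreducibility gives \(m=1\).

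\textbf{Your Koecher variant.} Your closing variant is correct and is a genuine economy over the paper. It replaces \(\Pic_{\Q}(\cA_2)=\Q\lambda\) with two facts: nonzero holomorphic forms of negative weight do not exist, and holomorphic forms of weight zero are constant. It also yields \cref{cor:exact-form} at the same time.
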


\begin{proof}
We prove
\(
\Div\bigl(F_n(\tau)\bigr)=H_{n^2}-e_nH_1
\)
and then compare divisor classes.

On \(\cA_2\setminus H_1=\iota(\cM_2)\), the form \(F_n(\tau)\) is
holomorphic and vanishes exactly on \(\iota(\cL_n)\), because \(F_n\)
cuts out \(\cL_n\) in \(\cM_2\). Since the closure of \(\iota(\cL_n)\)
in \(\cA_2\) is \(H_{n^2}\) by \cref{sec-2}, every component of
\(\Div(F_n(\tau))\) other than \(H_1\) is \(H_{n^2}\). Thus
\[
\Div(F_n(\tau))=mH_{n^2}+aH_1
\]
for some \(m\geq 1\) and \(a\in\Z\), and \(a=-e_n\) by \cref{defn:en}.

It remains to show that \(m=1\). Since \(\iota\) is an open immersion
onto \(\cA_2\setminus H_1\), it suffices to show that \(F_n\) cuts out
\(\cL_n\) with multiplicity one in \(\bP(2,4,6,10)\). All four weights
are even, so the chart \(\{J_2\neq 0\}\) is the affine space with
coordinates \(J_4/J_2^2\), \(J_6/J_2^3\), \(J_{10}/J_2^5\). Since
\(F_n\) is irreducible of weighted degree larger than two, it is not a
constant multiple of \(J_2\), so \(J_2\nmid F_n\) and the closure of
\(\cL_n\) meets the chart; being dense in its closure, \(\cL_n\) meets
it as well.

On the chart, the dehomogenization \(F_n/J_2^{\degw F_n/2}\) is
irreducible. Indeed, rehomogenizing a factorization into two
nonconstant factors gives \(J_2^{\,e}F_n=G_1G_2\) with \(e\geq 0\) and
\(G_1,G_2\) weighted-homogeneous of positive degree; since \(F_n\) is
irreducible and \(J_2\nmid F_n\), one \(G_i\) is a power of \(J_2\),
whose dehomogenization is constant, a contradiction. The
dehomogenization therefore generates a radical ideal, \(F_n\) vanishes
with multiplicity one along \(\cL_n\), and \(m=1\).

Finally, a meromorphic Siegel modular form of weight \(w\) has divisor
class \(w\lambda\), while \([H_{n^2}]=k(H_{n^2})\lambda\) and
\([H_1]=10\lambda\), because \(H_{n^2}=\Div(G_{n^2})\) and
\(H_1=\Div(\chi_{10})\). Comparing classes in
\(\Pic_{\Q}(\cA_2)=\Q\lambda\),
\[
\degw F_n\cdot\lambda
  = [H_{n^2}]-e_n[H_1]
  = \bigl(k(H_{n^2})-10e_n\bigr)\lambda,
\]
and \(\degw F_n=k(H_{n^2})-10e_n\).
\end{proof}

\begin{cor}
\label{cor:exact-form}
There is \(c\in\C^\times\) with
\(
G_{n^2}=c\,\chi_{10}^{\,e_n}\,F_n(\tau),
\)
and \(G_{n^2}\) is unique up to a nonzero scalar.
\end{cor}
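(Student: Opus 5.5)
The plan is to compare divisors directly, using the computation already made in the proof of \cref{thm:closed}. That proof shows
\[
\Div\bigl(F_n(\tau)\bigr)=H_{n^2}-e_nH_1 .
\]
Since \(\Div(\chi_{10})=H_1\), the meromorphic form \(\chi_{10}^{\,e_n}F_n(\tau)\) has divisor \(e_nH_1+H_{n^2}-e_nH_1=H_{n^2}\).

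This divisor is effective, so \(\chi_{10}^{\,e_n}F_n(\tau)\) is holomorphic on \(\bH_2\). Its weight is \(10e_n+\degw F_n=k(H_{n^2})\), where the equality is \cref{thm:closed}. Holomorphy at the cusps follows from Koecher's principle in degree two, so \(\chi_{10}^{\,e_n}F_n(\tau)\in M_{k(H_{n^2})}\). Its divisor is exactly \(H_{n^2}\), so it is itself an exact Humbert form for \(D=n^2\).

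For uniqueness, let \(G\) and \(G'\) be two level-one forms with divisor \(H_{n^2}\). Their weights agree, since both divisor classes equal \([H_{n^2}]\) in \(\Pic_{\Q}(\cA_2)=\Q\lambda\). The quotient \(G/G'\) is therefore a meromorphic modular form of weight \(0\) with empty divisor, hence holomorphic together with its inverse. By Koecher it is a holomorphic modular form of weight \(0\), hence constant. Applying this with \(G'=\chi_{10}^{\,e_n}F_n(\tau)\) gives \(G_{n^2}=c\,\chi_{10}^{\,e_n}F_n(\tau)\) for some \(c\in\C^\times\), and the same argument gives uniqueness up to a nonzero scalar.

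The only delicate point is regarding the divisor computation on \(\bH_2\) rather than on the coarse space \(\cA_2\). Multiplicity one of \(F_n(\tau)\) along \(H_{n^2}\) was established through the open immersion \(\iota\). I would note that the vanishing order of a modular form along a component of the preimage of \(H_{n^2}\) in \(\bH_2\) equals its order along \(H_{n^2}\) in the orbifold sense, consistent with the convention in which \(\Div(G_D)=H_D\) is meant. This should pose no real obstacle, since both forms are compared under the same convention.
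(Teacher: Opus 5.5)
Your proposal is correct and is essentially the paper's argument. The proof of \cref{thm:closed} gives \(\Div(\chi_{10}^{\,e_n}F_n(\tau))=H_{n^2}\) with weight \(10e_n+\degw F_n=k(H_{n^2})\), and the ratio of two forms with equal divisor is a nowhere-vanishing holomorphic form of weight zero, hence constant; your appeals to Koecher's principle, to \(\Pic_{\Q}(\cA_2)=\Q\lambda\) for equality of weights, and your remark on using one divisor convention throughout only make explicit what the paper leaves implicit.
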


\begin{proof}
By the proof of \cref{thm:closed}, \(\Div(\chi_{10}^{\,e_n}F_n(\tau))=e_n H_1+H_{n^2}-e_nH_1=H_{n^2}\), and the weight is \(10e_n+\degw F_n=k(H_{n^2})\). The ratio of two meromorphic forms with equal divisor is holomorphic of weight zero together with its inverse, hence a nonzero constant.
\end{proof}

\begin{lem}
\label{lem:igusa-relations}
Set \(T:=\chi_{12}/\chi_{10}\). There is an isomorphism of \(\C\)-algebras
\[
\Theta\colon\C[J_2,J_4,J_6,J_{10}]\;\xrightarrow{\ \sim\ }\;\C[T,\psi_4,\psi_6,\chi_{10}]
\]
under which
\[
J_2\mapsto a\,T,\quad
J_4\mapsto b_1\psi_4+b_2T^2,\quad
J_6\mapsto c_1\psi_6+c_2\psi_4T+c_3T^3,\quad
J_{10}\mapsto d\,\chi_{10},
\]
with \(a,b_1,c_1,d\in\Q^{\times}\) and \(b_2,c_2,c_3\in\Q\). For every \(n\geq 2\) one has
\(\Theta(F_n)=F_n(\tau)\).
\end{lem}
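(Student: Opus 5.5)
The plan is to build \(\Theta\) from Igusa's formulas recalled in \cref{sec-2} and then to check that it is an isomorphism by exhibiting an inverse. First I make the formulas precise. Up to nonzero rational constants, \(I_2=\chi_{12}/\chi_{10}\), \(I_4=\psi_4\), \(I_{10}=\chi_{10}\), and \(I_6\) is a combination of \(\psi_6\) and \(\psi_4\chi_{12}/\chi_{10}=\psi_4T\), with the coefficient of \(\psi_6\) nonzero. The relation between \((J_2,J_4,J_6,J_{10})\) and \((I_2,I_4,I_6,I_{10})\) is the standard one, with rational coefficients:
\[
J_2\propto I_2,\quad J_4\in\Q^\times I_4+\Q I_2^2,\quad J_6\in\Q^\times I_6+\Q I_2I_4+\Q I_2^3,\quad J_{10}\propto I_{10}.
\]
These are the normalizations of \cite{2016-3}. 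Substituting gives images of exactly the stated shape. The \(J_6\) image collects a \(\psi_6\) term, a \(\psi_4T\) term (from both \(I_6\) and \(I_2I_4\)) and a \(T^3\) term. The leading coefficients \(a,b_1,c_1,d\) are products of nonzero rationals, so they are nonzero.

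Next I define \(\Theta\) as the \(\C\)-algebra map from the free polynomial ring \(\C[J_2,J_4,J_6,J_{10}]\) sending the generators to these four elements. It lands in \(\C[T,\psi_4,\psi_6,\chi_{10}]\). To see that it is an isomorphism, I note that the substitution is triangular. The inverse map sends
\[
T\mapsto J_2/a,\quad \psi_4\mapsto \bigl(J_4-b_2(J_2/a)^2\bigr)/b_1,\quad \psi_6\mapsto\bigl(J_6-c_2\psi_4T-c_3T^3\bigr)/c_1,\quad \chi_{10}\mapsto J_{10}/d,
\]
where in the \(\psi_6\) entry \(\psi_4\) and \(T\) stand for the expressions just defined. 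The two composites fix the generators, so they are the identity.

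One subtlety remains: the target must really be a polynomial ring, meaning \(T,\psi_4,\psi_6,\chi_{10}\) must be algebraically independent as functions on \(\bH_2\). Surjectivity onto the subring they generate is automatic, so injectivity is the content. I get this from Igusa's structure theorem. The forms \(\psi_4,\psi_6,\chi_{10},\chi_{12}\) are algebraically independent, so the field \(\C(\psi_4,\psi_6,\chi_{10},\chi_{12})=\C(\psi_4,\psi_6,\chi_{10},T)\) has transcendence degree four, and hence the four generators \(T,\psi_4,\psi_6,\chi_{10}\) are algebraically independent. (\cref{lem:algindep} would give the same conclusion after passing to Fourier–Jacobi coefficients, but Igusa suffices.)

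Finally, \(\Theta(F_n)=F_n(\tau)\) holds by definition. \(F_n(\tau)\) was defined in \cref{sec-3} as the result of substituting these expressions into \(F_n\), which is exactly the evaluation of the algebra map \(\Theta\) at \(F_n\).

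The main obstacle is bookkeeping rather than conceptual: checking that the Igusa formulas in the chosen normalization contain no additional terms and that the leading constants are nonzero. I would take this directly from \cite{Igusa67,2016-3} rather than recompute it.
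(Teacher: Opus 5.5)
Your proposal is correct and follows essentially the same route as the paper: Igusa's relations composed with the triangular substitution between \((I_2,I_4,I_6,I_{10})\) and \((J_2,J_4,J_6,J_{10})\) give the stated shape with nonzero leading constants, and algebraic independence of \(T,\psi_4,\psi_6,\chi_{10}\) follows from that of \(\psi_4,\psi_6,\chi_{10},\chi_{12}\) via \(\chi_{12}=T\chi_{10}\). Triangularity then yields the isomorphism. Your explicit inverse and transcendence-degree argument simply spell out steps the paper states tersely, and \(\Theta(F_n)=F_n(\tau)\) is indeed true by definition.
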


\begin{proof}
Igusa's relations express the Igusa--Clebsch invariants through the generators
\cite{Igusa67}: there are \(\alpha,\beta,\delta,\gamma_1\in\Q^{\times}\) and
\(\gamma_2\in\Q\) with
\[
I_2=\alpha\,\frac{\chi_{12}}{\chi_{10}},\qquad
I_4=\beta\,\psi_4,\qquad
I_6=\gamma_1\psi_6+\gamma_2\psi_4\frac{\chi_{12}}{\chi_{10}},\qquad
I_{10}=\delta\,\chi_{10}.
\]
Composing with the triangular substitution between \((I_2,I_4,I_6,I_{10})\) and
\((J_2,J_4,J_6,J_{10})\) of \cref{sec-2} gives the displayed assignments, the leading
constants \(a,b_1,c_1,d\) being nonzero rational multiples of \(\alpha,\beta,\gamma_1,\delta\).
The four elements \(T,\psi_4,\psi_6,\chi_{10}\) are algebraically independent, since
\(\psi_4,\psi_6,\chi_{10},\chi_{12}\) are and \(\chi_{12}=T\chi_{10}\); the assignments are
triangular in the order \(T,\psi_4,\psi_6,\chi_{10}\) with \(a,b_1,c_1,d\neq0\), so the map is
an isomorphism onto \(\C[T,\psi_4,\psi_6,\chi_{10}]\).
\end{proof}

\begin{thm}
\label{thm:fj-order}
Let \(n\geq 2\),  \(e_n\) be the polar order of \cref{defn:en}, and  \(G_{n^2}\)
be the exact Humbert form of discriminant \(n^2\). Then \(e_n=\ord_{q_2}(G_{n^2})\).
\end{thm}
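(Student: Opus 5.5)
We plan to read off the \(q_2\)-order of \(G_{n^2}\) from the identity \(G_{n^2}=c\,\chi_{10}^{\,e_n}F_n(\tau)\) of \cref{cor:exact-form}, computing the \(q_2\)-order of each factor in a ring of formal Fourier--Jacobi series. Let \(\mathcal K\) be the field of meromorphic functions of \((\tau_1,z)\), and work in the ring \(\mathcal K((q_2))\) of formal Laurent series in \(q_2\). This ring is a field, and the \(q_2\)-order is additive in it. Every holomorphic Siegel modular form embeds there through its Fourier--Jacobi expansion. For a nonzero holomorphic \(F\), the order in \(\mathcal K((q_2))\) is \(\ord_{q_2}(F)\) as defined in \cref{sec-2}. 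From \cref{sec-2} we have
\[
\psi_4=E_4+O(q_2),\quad \psi_6=E_6+O(q_2),\quad \chi_{10}=q_2\,\phi_{10,1}+O(q_2^2),\quad \chi_{12}=q_2\,\phi_{12,1}+O(q_2^2),
\]
where the index-one coefficients are nonzero up to the constants absorbed. Hence \(T=\chi_{12}/\chi_{10}\) is a well-defined element of \(\mathcal K((q_2))\) of order \(0\), with leading coefficient \(\phi_{12,1}/\phi_{10,1}\).

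Next we expand \(F_n(\tau)\) via \cref{lem:igusa-relations}. Write \(\Theta(F_n)=\sum_{d\geq 0}P_d(T,\psi_4,\psi_6)\,\chi_{10}^{\,d}\) with \(P_d\) polynomials. The monomial \(T^a\psi_4^b\psi_6^c\chi_{10}^d\) has \(q_2\)-order \(d\) and leading coefficient \((\phi_{12,1}/\phi_{10,1})^aE_4^bE_6^c\phi_{10,1}^d\). So the coefficient of \(q_2^0\) in \(F_n(\tau)\) is \(P_0(\phi_{12,1}/\phi_{10,1},E_4,E_6)\).

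We must show this coefficient is nonzero. First, \(P_0\neq 0\). Since \(J_{10}\mapsto d\,\chi_{10}\) and the other generators do not involve \(\chi_{10}\), the condition \(P_0=0\) means exactly \(J_{10}\mid F_n\). By irreducibility this would force \(F_n=\pm J_{10}\), which is absurd: \(J_{10}\) does not vanish on \(\cL_n\subset\cM_2\). Second, \(P_0(\phi_{12,1}/\phi_{10,1},E_4,E_6)\neq 0\). This follows from \cref{lem:algindep}: \(E_4,E_6,\phi_{10,1},\phi_{12,1}\) are algebraically independent, so \(E_4,E_6,\phi_{12,1}/\phi_{10,1}\) are too. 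Therefore \(F_n(\tau)\) has \(q_2\)-order exactly \(0\) in \(\mathcal K((q_2))\).

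Finally, by additivity,
\[
\ord_{q_2}(G_{n^2})=e_n\,\ord_{q_2}(\chi_{10})+\ord_{q_2}(F_n(\tau))=e_n.
\]
The left side agrees with the holomorphic notion because \(G_{n^2}\) is holomorphic.

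The main point to handle carefully is the formal framework: the quotient \(T\) and the meromorphic form \(F_n(\tau)\) have to be embedded compatibly into \(\mathcal K((q_2))\), so that the identity of \cref{cor:exact-form} holds there. We get this by clearing denominators. \(\chi_{10}^{N}F_n(\tau)\) is a holomorphic polynomial in \(\psi_4,\psi_6,\chi_{10},\chi_{12}\) for \(N\) large, and the identity \(\chi_{10}^{N}G_{n^2}=c\,\chi_{10}^{\,e_n+N}F_n(\tau)\) is an equality of holomorphic forms, hence of their Fourier--Jacobi expansions. Dividing by the nonzero element \(\chi_{10}^{N}\) is legitimate in the field \(\mathcal K((q_2))\).
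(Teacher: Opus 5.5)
Your proof is correct and uses the same two ingredients as the paper's. The first is $J_{10}\nmid F_n$, so $\Theta(F_n)$ has a monomial free of $\chi_{10}$. The second is the algebraic independence of $E_4,E_6,\phi_{10,1},\phi_{12,1}$ from \cref{lem:algindep}, applied to leading Fourier--Jacobi coefficients. The difference is only packaging. The paper rewrites $\chi_{10}^{e_n}\Theta(F_n)$ as a polynomial in $\psi_4,\psi_6,\chi_{10},\chi_{12}$ and shows that $\ord_{q_2}$ equals the minimal total degree in $\chi_{10},\chi_{12}$ over the support. You instead use additivity of the $q_2$-valuation on the field $\mathcal K((q_2))$ to get $\ord_{q_2}F_n(\tau)=0$ directly, and you also justify $J_{10}\nmid F_n$ by irreducibility, which the paper leaves implicit.
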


\begin{proof}
By \cref{cor:exact-form} and \cref{lem:igusa-relations},
\(G_{n^2}=c\,\chi_{10}^{\,e_n}\,Q\) with \(Q=\Theta(F_n)\in\C[T,\psi_4,\psi_6,\chi_{10}]\)
and \(c\in\C^\times\). Here \(\chi_{10}\nmid Q\), since \(J_{10}\nmid F_n\) and
\(J_{10}\mapsto\chi_{10}\) up to a nonzero constant.

Write \(Q=\sum q_{i,j,a,b}\,T^i\psi_4^a\psi_6^b\chi_{10}^j\). Since \(T=\chi_{12}/\chi_{10}\),
\[
\chi_{10}^{\,e_n}Q=\sum q_{i,j,a,b}\,\psi_4^a\psi_6^b\chi_{10}^{\,e_n+j-i}\chi_{12}^{\,i},
\]
an identity in \(\C[\psi_4,\psi_6,\chi_{12}][\chi_{10}^{-1}]\). The map
\((i,j)\mapsto(e_n+j-i,\,i)\) is injective, so distinct quadruples \((i,j,a,b)\) give distinct
terms on the right and no cancellation occurs. Hence the terms of
\(G_{n^2}=c\,\chi_{10}^{\,e_n}Q\) are exactly these, with coefficients \(c\,q_{i,j,a,b}\), and
since \(G_{n^2}\) lies in the free algebra \(\C[\psi_4,\psi_6,\chi_{10},\chi_{12}]\), every
exponent \(e_n+j-i\) is nonnegative. In particular
\[
\min\bigl\{u+v: \psi_4^{a}\psi_6^{b}\chi_{10}^{u}\chi_{12}^{v}\in\operatorname{supp}(G_{n^2})\bigr\} = \min_{(i,j)}\bigl((e_n+j-i)+i\bigr)=e_n+\min_j j=e_n,
\]
since some monomial of \(Q\) has \(j=0\).

Recall that \(q_2= e^{2\pi i \tau_2}\).
It remains to show that \(\ord_{q_2}\) of a sum of distinct monomials in the generators equals the minimum of \(u+v\) over the support. By \cref{sec-2}, the Fourier-Jacobi expansions of the generators begin
\[
\begin{split}
\psi_4&=E_4(\tau_1)+O(q_2),\qquad \psi_6=E_6(\tau_1)+O(q_2),\\
\chi_{10}&=c_{10}\,\phi_{10,1}\,q_2+O(q_2^2),\qquad \chi_{12}=c_{12}\,\phi_{12,1}\,q_2+O(q_2^2),
\end{split}
\]
with \(c_{10},c_{12}\in\C^\times\). Orders add under multiplication because the leading coefficients are nonzero holomorphic functions on a domain, so the monomial \(\psi_4^{a}\psi_6^{b}\chi_{10}^{u}\chi_{12}^{v}\) has \(q_2\)-order exactly \(u+v\), with leading coefficient a nonzero multiple of \(E_4^aE_6^b\phi_{10,1}^{u}\phi_{12,1}^{v}\). Let \(e\) be the minimum of \(u+v\) over the support of \(G_{n^2}\). The coefficient of \(q_2^{\,e}\) in \(G_{n^2}\) is a nonzero polynomial expression in \(E_4,E_6,\phi_{10,1},\phi_{12,1}\), which does not vanish by \cref{lem:algindep}. Hence \(\ord_{q_2}(G_{n^2})=e=e_n\).
\end{proof}


\begin{prop}
\label{prop:torsion-vanishing}
Let \(n\geq 2\) and  \(a,e\in\Z\) with \(\gcd(a,e,n)=1\). Then \(G_{n^2}\) vanishes identically on the divisor \(\{nz=a\tau_1+e\}\subset\bH_2\). Consequently every Fourier-Jacobi coefficient \(\phi_m\) of \(G_{n^2}\) satisfies
\[
\phi_m\Bigl(\tau_1,\frac{a\tau_1+e}{n}\Bigr)=0\qquad\text{for all }\tau_1\in\bH.
\]
\end{prop}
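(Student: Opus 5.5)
The plan is to recognize \(\{nz=a\tau_1+e\}\) as the preimage of a singular relation of discriminant \(n^2\), so that it lies in the preimage of \(H_{n^2}=\Div(G_{n^2})\). Rewrite the equation as
\[
a\tau_1-nz+0\cdot\tau_2+0\cdot(z^2-\tau_1\tau_2)+e=0,
\]
which is the relation \cref{eq:singular-relation} for the quintuple \((a,b,c,d,e)=(a,-n,0,0,e)\). Its discriminant is \(b^2-4ac-4de=n^2\). The quintuple is primitive exactly when \(\gcd(a,n,e)=1\), which is the hypothesis. By the description of the preimage of \(H_D\) in \(\bH_2\) recalled in \cref{sec-2}, this hyperplane section is therefore a component of the preimage of \(H_{n^2}\).

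Next I would use \cref{cor:exact-form}: \(G_{n^2}\) is holomorphic on \(\bH_2\) with divisor exactly \(H_{n^2}\) on \(\cA_2\). Its pullback to \(\bH_2\) thus vanishes on the full preimage of \(H_{n^2}\), which is locally finite, and in particular on the analytic hypersurface \(\{nz=a\tau_1+e\}\). One point deserves a check: a modular form's divisor on the quotient pulls back to zeros on every lift, because the automorphy factor is nowhere vanishing. This is immediate, and it is also the reason \(G_{n^2}\) exists as a function on \(\bH_2\) with zero set the \(\Sp_4(\Z)\)-orbit of any one such hyperplane. The paper also states that \(\Sp_4(\Z)\) acts transitively on primitive quintuples of fixed discriminant, so no component is missed.

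For the Fourier-Jacobi consequence, fix \(\tau_1\) and set \(z_0=(a\tau_1+e)/n\). Then \(G_{n^2}(\tau_1,z_0,\tau_2)=0\) for every \(\tau_2\) with \(\operatorname{Im}\tau_2\) large enough that \((\tau_1,z_0,\tau_2)\in\bH_2\); since the constraint \(nz_0=a\tau_1+e\) does not involve \(\tau_2\), all \(\tau_2\) with \(\operatorname{Im}\tau_2>(\operatorname{Im}z_0)^2/\operatorname{Im}\tau_1\) are allowed. The expansion \(\sum_m\phi_m(\tau_1,z_0)q_2^m\) converges there and vanishes identically as a function of \(q_2\) on a punctured disc. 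Uniqueness of power-series coefficients then gives \(\phi_m(\tau_1,z_0)=0\) for all \(m\). The only mild obstacle is to make sure the Fourier-Jacobi expansion converges at points with this fixed \(z_0\). This holds because the expansion converges absolutely and locally uniformly on \(\bH_2\), and the admissible set of \(\tau_2\) is a nonempty half-plane.
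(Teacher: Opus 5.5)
Your proposal is correct and follows the paper's proof essentially step for step. The primitive quintuple \((a,-n,0,0,e)\) of discriminant \(n^2\) places \(\{nz=a\tau_1+e\}\) in the preimage of \(H_{n^2}\), and uniqueness of the \(q_2\)-expansion on the admissible half-plane then gives the vanishing of each \(\phi_m\); your explicit bound \(\operatorname{Im}\tau_2>(\operatorname{Im}z_0)^2/\operatorname{Im}\tau_1\) simply makes the paper's ``sufficiently large'' precise.
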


\begin{proof}
The quintuple \((a,-n,0,0,e)\) is primitive, since \(\gcd(a,n,e)=1\), and has discriminant \(n^2\). Its singular relation \cref{eq:singular-relation} is \(a\tau_1-nz+e=0\), so the divisor \(\{nz=a\tau_1+e\}\) lies in the preimage of \(H_{n^2}\) in \(\bH_2\), on which the pullback of \(G_{n^2}\) vanishes. Fix \(\tau_1\in\bH\) and \(z=(a\tau_1+e)/n\). The matrix \(\tau\) lies in \(\bH_2\) for every \(\tau_2\) with \(\operatorname{Im}\tau_2\) sufficiently large, and on this open set the convergent series \(\sum_m\phi_m(\tau_1,z)\,q_2^m\) vanishes identically in \(\tau_2\). Uniqueness of Fourier coefficients in \(\tau_2\) gives \(\phi_m(\tau_1,z)=0\) for every \(m\).
\end{proof}

The points \((a\tau_1+e)/n\) appearing in \cref{prop:torsion-vanishing} are exactly the points of exact order \(n\) on the elliptic curve \(\C/\Lambda_{\tau_1}\), and the pairs \((a,e)\) modulo inversion index the fiber of \(X_1(n)\to X(1)\) over the point of \(X(1)\) determined by \(\tau_1\). The proposition thus attaches to each point of \(X_1(n)\) a branch of \(H_{n^2}\) along \(H_1\) on which every Fourier-Jacobi coefficient vanishes; the covering degree \(\nu(n)\) of \cref{subsec:modular-curves} then bounds the vanishing from below.

\begin{prop}
\label{prop:lower-bound}
For every \(n\geq 2\), \(e_n\geq\nu(n)\). Consequently
\[
\degw F_n \;\leq\; k(H_{n^2})\;-\;10\,\nu(n).
\]
\end{prop}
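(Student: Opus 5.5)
The plan is to bound the index of the first nonvanishing Fourier--Jacobi coefficient of \(G_{n^2}\) by counting zeros, and then transfer the bound to \(e_n\). Set \(m:=\ord_{q_2}(G_{n^2})\). By \cref{thm:fj-order}, \(m=e_n\), so it suffices to prove \(m\geq\nu(n)\). The degree inequality then follows at once from \cref{thm:closed}: \(\degw F_n=k(H_{n^2})-10e_n\leq k(H_{n^2})-10\nu(n)\).

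Let \(\phi_m\) be the leading coefficient. It is a nonzero Jacobi form of weight \(k(H_{n^2})\), which is even by \cref{sec-2}, and of index \(m\). Since \(\phi_m\not\equiv0\), I fix \(\tau_1\in\bH\) such that \(z\mapsto\phi_m(\tau_1,z)\) is not identically zero. By \cref{prop:torsion-vanishing}, this function vanishes at every point \((a\tau_1+e)/n\) with \(\gcd(a,e,n)=1\). Modulo \(\Lambda_{\tau_1}\), these are exactly the points of exact order \(n\) on \(\C/\Lambda_{\tau_1}\), and there are \(\varphi(n)\psi(n)\) of them.

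First I rule out \(m=0\). In that case \(\phi_0\) is independent of \(z\), so it is a nonzero constant in \(z\). This contradicts its vanishing at a torsion point. Hence \(m\geq1\), and \cite[Thm.~1.2]{EichlerZagier} gives exactly \(2m\) zeros with multiplicity in a fundamental domain.

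For \(n\geq3\), the distinct torsion zeros give \(2m\geq\varphi(n)\psi(n)\), so \(m\geq\tfrac12\varphi(n)\psi(n)=\nu(n)\).

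For \(n=2\), there are only three points of exact order \(2\), and this naive count gives only \(m\geq2\). This is where \cref{lem:2torsion-parity} is needed, and it is the one delicate step. The lemma says the vanishing order at each nontrivial \(2\)-torsion point is even. Since that order is positive, each of the three points is a zero of order at least \(2\). Therefore \(2m\geq6\), so \(m\geq3=\nu(2)\).

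The remaining points to check are routine. Prime-to-\(n\) pairs \((a,e)\) that are distinct modulo \(n\) give distinct points of \(\C/\Lambda_{\tau_1}\), so no zero is counted twice. The Jacobi forms involved are holomorphic, so the zero-count theorem applies.
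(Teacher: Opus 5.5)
Your proof is correct and follows the paper's argument. You combine torsion vanishing from \cref{prop:torsion-vanishing}, the Eichler--Zagier zero count, and \cref{lem:2torsion-parity} (to double the multiplicities at \(n=2\)), then transfer the bound via \cref{thm:fj-order} and \cref{thm:closed}. The only cosmetic difference is that you apply the count directly to the leading coefficient \(\phi_{e_n}\), whereas the paper shows by contradiction that \(\phi_m=0\) for each \(m<\nu(n)\); your separate treatment of \(m=0\) is harmless but already covered by the zero count.
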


\begin{proof}
Let \(m<\nu(n)\) and let \(\phi_m\) be the \(m\)-th Fourier-Jacobi coefficient of \(G_{n^2}\); it is a Jacobi form of even weight \(k(H_{n^2})\) and index \(m\). Suppose \(\phi_m\neq 0\) and fix \(\tau_1\) with \(\phi_m(\tau_1,\cdot)\not\equiv 0\). The points
\[
x_{a,e}=\frac{a\tau_1+e}{n}\ \bmod\ \Lambda_{\tau_1},\qquad (a,e)\in(\Z/n\Z)^2,\ \gcd(a,e,n)=1,
\]
are pairwise distinct and are exactly the \(\varphi(n)\psi(n)\) points of exact order \(n\) on \(\C/\Lambda_{\tau_1}\). By \cref{prop:torsion-vanishing}, \(\phi_m(\tau_1,\cdot)\) vanishes at every \(x_{a,e}\).

Suppose \(n\geq 3\). No \(x_{a,e}\) is 2-torsion, so the zeros number at least \(\varphi(n)\psi(n)=2\nu(n)>2m\), contradicting the zero count \cite[Thm.~1.2]{EichlerZagier}.

Assume \(n=2\), so \(m\leq 2\). The points \(x_{a,e}\) are the three nontrivial 2-torsion points \(1/2\), \(\tau_1/2\), \((\tau_1+1)/2\). By \cref{lem:2torsion-parity} the vanishing order of \(\phi_m(\tau_1,\cdot)\) at each of them is even, hence at least \(2\), so the zeros number at least \(6>2m\), contradicting the zero count \cite[Thm.~1.2]{EichlerZagier}.

Hence \(\phi_m=0\) for every \(m<\nu(n)\), and \(e_n=\ord_{q_2}(G_{n^2})\geq\nu(n)\) by \cref{thm:fj-order}. The degree bound follows from \cref{thm:closed}.
\end{proof}
\section{The intersection with the product locus and the weight identity}
\label{sec-4}

We will consider 
the upper bound \(e_n\leq\nu(n)\)  in \cref{sec-5} by restricting \(G_{n^2}\) to the product locus, and the restriction vanishes along the curve \(H_{n^2}\cap H_1\). This section describes that curve through modular curves and proves the weight identity that converts its degree into \(\nu(n)\).

A point of \(H_1\) is a product \(E_1\times E_2\), with coordinates the absolute invariants \((j_1,j_2)\). A product lies on \(H_{n^2}\) if and only if it satisfies a primitive singular relation \cref{eq:singular-relation} of discriminant \(n^2\). On the product locus such a relation forces a cyclic isogeny \(E_1\to E_2\) of degree \(M=(n^2-x^2)/(4u^2)\), where \((x,u)\) runs over the pairs occurring in the sum below; see \cite[Ch.~IX]{HvdG}. The intersection \(H_{n^2}\cap H_1\) is therefore a union of the modular curves \(T_M\) of \cref{subsec:modular-curves}, and its degree, counted with one branch per pair, is the divisor sum
\[
\widetilde{R}(n)=\sum_{\substack{x\in\Z,\ |x|<n\\ x\equiv n\ (\mathrm{mod}\ 2)}}\ \sum_{\substack{u\geq 1,\ \gcd(u,x)=1\\ u^2\mid (n^2-x^2)/4}}\psi\Bigl(\frac{n^2-x^2}{4u^2}\Bigr),
\]
where \(x\) runs over both signs, \(x=0\) is counted once, and \(\psi(M)=\deg_{j_2}(T_M)\). Thus \(\widetilde{R}(1)=0\), \(\widetilde{R}(2)=1\), \(\widetilde{R}(3)=6\), \(\widetilde{R}(4)=14\), \(\widetilde{R}(5)=38\), \(\widetilde{R}(6)=48\), \(\widetilde{R}(7)=116\). The precise intersection cycle, with multiplicities, is determined in \cref{lem:cycle} and \cref{cor:sharp}.

\begin{lem}
\label{lem:sigma-psi}
Let \(\sigma_1(N)=\sum_{d\mid N}d\).
For every \(N\geq 1\),
\(
\sigma_1(N)=\sum_{u^2\mid N}\psi\Bigl(\frac{N}{u^2}\Bigr).
\)
\end{lem}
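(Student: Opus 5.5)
Both sides are multiplicative in \(N\), so it suffices to check the identity on prime powers. The left side \(\sigma_1\) is multiplicative by the standard divisor-sum argument, and \(\psi\) is multiplicative by its product formula \(\psi(N)=N\prod_{p\mid N}(1+1/p)\).

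For the right side, write \(f(N)=\sum_{u^2\mid N}\psi(N/u^2)\). This is the Dirichlet convolution \(\psi*\mathbf 1_{\square}\), where \(\mathbf 1_{\square}\) is the indicator of perfect squares. Indeed, the terms are indexed by factorizations \(N=u^2\cdot d\) with \(u\geq 1\), that is, by divisors \(u^2\) of \(N\) that are squares. The indicator of squares is multiplicative, and a convolution of multiplicative functions is multiplicative, so \(f\) is multiplicative.

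It then remains to verify \(\sigma_1(p^r)=\sum_{0\le 2s\le r}\psi(p^{r-2s})\) for a prime \(p\) and \(r\geq 0\). We have \(\psi(1)=1\) and \(\psi(p^t)=p^t+p^{t-1}\) for \(t\geq 1\). Group the sum by the parity of \(r\).

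\emph{Case \(r=2k\).} The terms are \(\psi(p^{2k}),\psi(p^{2k-2}),\dots,\psi(p^2),\psi(1)\). Each term with \(t=2k-2s\geq 1\) contributes \(p^t+p^{t-1}\), which is the pair of consecutive powers \(\{p^t,p^{t-1}\}\). As \(s\) runs, these pairs tile the exponents \(2k,2k-1,\dots,1\) exactly once. The final term \(\psi(1)=1\) supplies \(p^0\). The total is \(\sum_{i=0}^{2k}p^i=\sigma_1(p^{2k})\).

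\emph{Case \(r=2k+1\).} The terms are \(\psi(p^{2k+1}),\dots,\psi(p)\). These contribute the pairs \(\{p^{2k+1},p^{2k}\},\dots,\{p,1\}\), which together cover the exponents \(0,\dots,2k+1\) exactly once. The total is again \(\sigma_1(p^{r})\).

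There is no real obstacle here; the only point needing care is that the constant term \(\psi(1)=1\) is not of the form \(p^t+p^{t-1}\). It appears only when \(r\) is even, where it supplies the missing exponent \(0\).

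If one prefers to avoid multiplicativity, there is a direct bijection. The number \(\psi(M)\) counts cyclic subgroups of order \(M\) in \((\Z/M)^2\)-type lattices, that is, index-\(M\) sublattices of \(\Z^2\) that are not contained in \(p\Z^2\) for any prime \(p\). Every index-\(N\) sublattice of \(\Z^2\) can be written uniquely as \(u\Lambda'\), where \(u\) is its content and \(\Lambda'\) is primitive of index \(N/u^2\). The total number of index-\(N\) sublattices of \(\Z^2\) is \(\sigma_1(N)\). I would present the multiplicative computation as the proof and mention the lattice interpretation as a remark.
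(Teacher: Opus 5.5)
Your proof is correct and follows the paper's argument. You reduce to prime powers by multiplicativity, then observe that each term \(\psi(p^{r-2s})=p^{r-2s}+p^{r-2s-1}\) contributes one pair of consecutive exponents, the pairs tile the exponents, and \(\psi(1)=1\) supplies \(p^0\) exactly when \(r\) is even. Your explicit justification that the right side is multiplicative, as the Dirichlet convolution \(\psi*\mathbf{1}_{\square}\), fills in a step the paper asserts without comment.
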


\begin{proof}
Both sides are multiplicative in \(N\), so it suffices to take \(N=p^a\) with \(p\) prime. The right-hand side is
\(
\sum_{0\leq 2j\leq a}\psi(p^{a-2j}).
\)
Since \(\psi(p^e)=p^e+p^{e-1}\) for \(e\geq 1\), the term with \(a-2j\geq 1\) contributes the two powers \(p^{a-2j}\) and \(p^{a-2j-1}\). The term with \(a-2j=0\), present only when \(a\) is even, contributes \(\psi(1)=1\). Each power \(p^i\) with \(0\leq i\leq a\) occurs exactly once, so the sum is \(\sigma_1(p^a)\).
\end{proof}

\begin{lem}
\label{lem:phi-psi-square}
For every \(n\geq 1\),
\(
n^2=\sum_{d\mid n}\varphi(d)\,\psi(d).
\)
This is Jordan's totient identity \(\sum_{d\mid n}J_2(d)=n^2\), since
\(J_2(d)=\varphi(d)\psi(d)\).
\end{lem}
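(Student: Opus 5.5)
The identity \(n^2=\sum_{d\mid n}\varphi(d)\psi(d)\) is most cleanly proved by counting the points of the group \((\Z/n\Z)^2\) according to their exact order.

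Every element \(v\in(\Z/n\Z)^2\) has an exact order \(d\) dividing \(n\), and \(v\) lies in the subgroup \(\tfrac{n}{d}(\Z/n\Z)^2\cong(\Z/d\Z)^2\). Conversely, each element of exact order \(d\) in \((\Z/d\Z)^2\) corresponds to exactly one element of \((\Z/n\Z)^2\) of exact order \(d\). Hence
\[
n^2=\#(\Z/n\Z)^2=\sum_{d\mid n}\#\{v\in(\Z/d\Z)^2:\ v\ \text{of exact order}\ d\}.
\]

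It remains to show that the number of elements of exact order \(d\) in \((\Z/d\Z)^2\) equals \(\varphi(d)\psi(d)\). This count was already noted in \cref{subsec:modular-curves} as the index \([\SL_2(\Z):\Gamma_1(d)]=d^2\prod_{p\mid d}(1-p^{-2})\), that is, the number of pairs \((a,e)\bmod d\) with \(\gcd(a,e,d)=1\). To verify it directly, observe that the count is multiplicative in \(d\) by the Chinese remainder theorem. For \(d=p^k\), the elements of exact order \(p^k\) are those outside \(p(\Z/p^k\Z)^2\), so there are \(p^{2k}-p^{2k-2}\) of them. On the other side, \(\varphi(p^k)\psi(p^k)=p^{k-1}(p-1)\cdot p^{k-1}(p+1)=p^{2k-2}(p^2-1)\), which agrees. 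Substituting this into the sum gives the identity.

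As an alternative route, both sides are multiplicative functions of \(n\), since \(\varphi\psi\) is multiplicative and Dirichlet convolution with \(1\) preserves multiplicativity. It then suffices to check a prime power: the right-hand side telescopes as \(1+\sum_{k=1}^{a}(p^{2k}-p^{2k-2})=p^{2a}\).

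None of these steps poses an obstacle. The only point needing care is the convention \(\psi(1)=\varphi(1)=1\), which makes the \(d=1\) term equal to \(1\), the count of the single point of order one.
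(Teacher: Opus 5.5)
Your proof is correct. Your ``alternative route'' is the paper's proof: both sides are multiplicative, and at \(n=p^a\) the right-hand side telescopes as \(1+\sum_{j=1}^{a}(p^{2j}-p^{2j-2})=p^{2a}\).

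Your main route is a genuinely different organization. You partition \((\Z/n\Z)^2\) by exact order, so the identity says that the \(d\)-th summand counts the points of exact order \(d\) in \(n\)-torsion. This is the same reading of \(\varphi(d)\psi(d)=[\SL_2(\Z):\Gamma_1(d)]\) that the paper records in \cref{subsec:modular-curves} and uses for the lower bound \(e_n\geq\nu(n)\). What it buys is a conceptual account of how the lemma enters \cref{thm:weight-identity}: there the terms \(6\varphi(m)\psi(m)\), \(m\mid n\), assemble into the \(6n^2\) of Siegel's formula, which is simply the \(n\)-torsion partitioned by exact order.

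It does not save work. Identifying the exact-order count with \(\varphi(d)\psi(d)\) still needs a local computation, and your count \(p^{2k}-p^{2k-2}\) of elements of exact order \(p^k\) is exactly the \(k\)-th telescoping term of the paper's sum. So the two proofs coincide at prime powers, and the paper's version is the shorter one.
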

\begin{proof}
The right-hand side is multiplicative, and for \(n=p^a\) it equals
\[
1+\sum_{j=1}^{a}\bigl(p^j-p^{j-1}\bigr)\bigl(p^j+p^{j-1}\bigr)
=1+\sum_{j=1}^{a}\bigl(p^{2j}-p^{2j-2}\bigr)=p^{2a}.\qedhere
\]
\end{proof}

Siegel's formula of \cref{sec-2} expresses \(a_{n^2}\) through the sum \(A(n)\) below. That
sum counts all singular relations of discriminant \(n^2\) on the product locus, primitive or
not. The next proposition stratifies them by content, each relation of content \(h\) being
\(h\) times a primitive relation of discriminant \((n/h)^2\).
 For \(n\geq 1\) set
\[
A(n)=\sum_{\substack{x\in\Z,\ |x|<n\\ x\equiv n\ (\mathrm{mod}\ 2)}}\sigma_1\Bigl(\frac{n^2-x^2}{4}\Bigr),
\]
so that Siegel's formula reads \(a_{n^2}=24\,A(n)+12n^2-2\).

\begin{prop}
\label{prop:stratification}
For every \(n\geq 1\),
\(
A(n)=\sum_{m\mid n}\widetilde{R}(m).
\)
\end{prop}

\begin{proof}
By \cref{lem:sigma-psi},
\(
A(n)=\sum_{(x,u)}\psi\Bigl(\frac{n^2-x^2}{4u^2}\Bigr),
\)
the sum over all pairs \((x,u)\) with \(|x|<n\), \(x\equiv n\pmod 2\), \(u\geq 1\), and \(u^2\mid(n^2-x^2)/4\). Given such a pair, set \(h=\gcd(x,u)\), with the convention \(\gcd(0,u)=u\). From \(4h^2\mid 4u^2\mid n^2-x^2\) and \(h^2\mid x^2\) we get \(h^2\mid n^2\), hence \(h\mid n\). Write \(m=n/h\), \(x=hx'\), \(u=hu'\), so \(\gcd(x',u')=1\). Then
\[
\frac{n^2-x^2}{4}=h^2\cdot\frac{m^2-x'^2}{4},\qquad
\frac{n^2-x^2}{4u^2}=\frac{m^2-x'^2}{4u'^2},
\]
and \(u^2\mid(n^2-x^2)/4\) if and only if \(4u'^2\mid m^2-x'^2\). The last divisibility forces \(m\equiv x'\pmod 2\) and \(u'^2\mid(m^2-x'^2)/4\), and \(|x|<n\) is equivalent to \(|x'|<m\); so \((x',u')\) is a pair counted by \(\widetilde{R}(m)\). 

Conversely, given \(h\mid n\) and a pair \((x',u')\) counted by \(\widetilde{R}(n/h)\), the pair \((hx',hu')\) satisfies all the conditions for \(n\): the parity holds because \(n-hx'=h(m-x')\) with \(m-x'\) even. The two assignments are inverse bijections and preserve the summand, so \(A(n)=\sum_{h\mid n}\widetilde{R}(n/h)\).
\end{proof}

\begin{thm}
\label{thm:weight-identity}
For every \(n\geq 3\),
\[
k(H_{n^2})\;=\;12\,\nu(n)\;+\;12\,\widetilde{R}(n),
\]
and \(k(H_4)=12\,\nu(2)+24\,\widetilde{R}(2)=60\). In particular \(12\mid k(H_{n^2})\) for every \(n\geq 2\).
\end{thm}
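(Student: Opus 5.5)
Proof plan. We use van der Geer's recursion together with \cref{prop:stratification}, \cref{lem:phi-psi-square}, and induction on \(n\).

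Start from Siegel's formula \(a_{n^2}=24A(n)+12n^2-2\). Substitute \(A(n)=\sum_{m\mid n}\widetilde R(m)\) from \cref{prop:stratification} and \(n^2=\sum_{m\mid n}\varphi(m)\psi(m)\) from \cref{lem:phi-psi-square}. This gives
\[
\tfrac12 a_{n^2}=\sum_{m\mid n}\bigl(12\widetilde R(m)+6\varphi(m)\psi(m)\bigr)-1 .
\]
For \(m\geq 3\) we have \(6\varphi(m)\psi(m)=12\nu(m)\). For the small divisors:
\begin{itemize}
\item \(m=1\): the summand is \(12\widetilde R(1)+6=6\), and \(-1\) is absorbed to give \(5=v(1)k(H_1)\), since \(v(1)k(H_1)=\tfrac12\cdot 10\).
\item \(m=2\): the summand is \(12\cdot 1+6\cdot 3=30\). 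This should equal \(v(4)k(H_4)=\tfrac12 k(H_4)\), i.e.\ \(k(H_4)=60=12\nu(2)+24\widetilde R(2)\).
\end{itemize}
We check the \(m=2\) case directly from the recursion with \(n=2\): \(a_4=24\sigma_1(1)+12\cdot4-2=70\), so \(\tfrac12 a_4=35=5+\tfrac12k(H_4)\), giving \(k(H_4)=60\). This matches the claim, since \(12\cdot3+24\cdot1=60\).

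So the identity reads
\[
\sum_{m\mid n}v(m^2)k(H_{m^2})=5+[2\mid n]\cdot 30+\sum_{m\mid n,\,m\geq3}\bigl(12\nu(m)+12\widetilde R(m)\bigr).
\]
We argue by strong induction on \(n\geq3\). Assume \(k(H_{m^2})=12\nu(m)+12\widetilde R(m)\) for all divisors \(3\leq m<n\), and use the known values \(v(1)k(H_1)=5\) and \(v(4)k(H_4)=30\). Subtracting all proper-divisor terms from both sides leaves \(v(n^2)k(H_{n^2})=12\nu(n)+12\widetilde R(n)\) with \(v(n^2)=1\), which is the claim. Because the recursion determines \(k(H_{n^2})\) uniquely, this matching term-by-term is the whole argument.

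The divisibility \(12\mid k(H_{n^2})\) then follows immediately: \(k(H_4)=60\), and for \(n\geq3\) the right-hand side is a multiple of 12.

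The main obstacle is bookkeeping rather than ideas, specifically:
\begin{itemize}
\item checking that the conventions in \(\widetilde R\) (\(x\) over both signs, \(x=0\) counted once, \(\gcd(0,u)=u\)) match those in \(A(n)\), which is already handled by \cref{prop:stratification};
\item handling the constant \(-1\) and the weights \(v=\tfrac12\) at \(m=1,2\), where \(6\varphi\psi\neq 12\nu\) (\(\nu(2)=3\) but \(\tfrac12\varphi(2)\psi(2)=\tfrac32\)).
\end{itemize}
The plan addresses this by verifying the \(m=1\) and \(m=2\) contributions numerically, as above, before running the induction.
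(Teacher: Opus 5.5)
Your proposal is correct and follows essentially the paper's argument: both substitute \cref{prop:stratification} and \cref{lem:phi-psi-square} into Siegel's formula for \(\tfrac12 a_{n^2}\) and match the divisor sum term by term against van der Geer's recursion, treating the \(m=1,2\) contributions separately. The paper packages the same computation as checking that \(B(1)=10\), \(B(2)=60\), \(B(n)=12\nu(n)+12\widetilde{R}(n)\) satisfies the recursion, so that \(B(n)=k(H_{n^2})\) by uniqueness; this is your strong induction in different words.
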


\begin{proof}
Define \(B(1)=10\), \(B(2)=60\), and \(B(n)=12\,\nu(n)+12\,\widetilde{R}(n)\) for \(n\geq 3\). The recursion of \cref{sec-2} determines \(k(H_{n^2})\) for every \(n\) from the initial value \(k(H_1)=10\) by induction on \(n\), since \(v(n^2)\neq 0\). It therefore suffices to prove that \(B\) satisfies the same recursion, that is,
\[
\sum_{m\mid n}v(m^2)\,B(m)=\tfrac12\,a_{n^2}=12\,A(n)+6n^2-1
\qquad\text{for every } n\geq 1.
\]
Write \(B(1)=12\,\varphi(1)\psi(1)-2+12\,\widetilde{R}(1)\) and \(B(2)=12\,\varphi(2)\psi(2)+24\,\widetilde{R}(2)\). For the first summands, \(v(1)\cdot\bigl(12\varphi(1)\psi(1)-2\bigr)=6\varphi(1)\psi(1)-1\), \(v(4)\cdot 12\varphi(2)\psi(2)=6\varphi(2)\psi(2)\), and \(v(m^2)\cdot 12\nu(m)=6\varphi(m)\psi(m)\) for \(m\geq 3\); summing over \(m\mid n\) and applying \cref{lem:phi-psi-square} gives
\[
\sum_{m\mid n}6\,\varphi(m)\psi(m)-1=6n^2-1.
\]
For the second summands, \(v(1)\cdot 12\widetilde{R}(1)=0\), \(v(4)\cdot 24\widetilde{R}(2)=12\widetilde{R}(2)\), and \(v(m^2)\cdot 12\widetilde{R}(m)=12\widetilde{R}(m)\) for \(m\geq 3\); summing over \(m\mid n\) and applying \cref{prop:stratification} gives \(12\,A(n)\). Adding the two contributions yields \(12A(n)+6n^2-1\), as required. Divisibility by \(12\) for \(n\geq 3\) is immediate from the identity, and \(k(H_4)=60=12\cdot 5\).
\end{proof}

The identity splits the Humbert weight into two geometric contributions along the product
locus. The term \(12\,\widetilde{R}(n)\) is the degree of the intersection curve
\(H_{n^2}\cap H_1\). The term \(12\,\nu(n)\) is the transversal vanishing along that curve,
indexed by the covering \(X_1(n)\to X(1)\). The two halves are matched in \cref{sec-5}.

\section{The upper bound and the main theorem}
\label{sec-5}


Throughout, \(k=k(H_{n^2})\), which is divisible by \(12\) by \cref{thm:weight-identity}.
%
Let
\begin{equation}
\label{defn:restriction}
g_n(\tau_1,\tau_2):=G_{n^2}\bigl(\begin{smallmatrix}\tau_1&0\\ 0&\tau_2\end{smallmatrix}\bigr),
\end{equation}
the \emph{restriction of the exact Humbert form} to the product locus \(z=0\).
%
The restriction \(g_n\) is a modular form of weight \(k\) in each of \(\tau_1,\tau_2\) separately, symmetric under the swap \(\tau_1\leftrightarrow\tau_2\), since \(\bigl(\begin{smallmatrix}0&I\\ I&0\end{smallmatrix}\bigr)\)-conjugation lies in \(\Sp_4(\Z)\).

\begin{lem}
\label{lem:restriction-bound}
For every $n\geq 2$, 
\(e_n\leq\ord_{q_2}(g_n)\), where \(\ord_{q_2}(g_n)\) is the vanishing order of \(g_n\) in \(q_2\) at the cusp of the second factor.
\end{lem}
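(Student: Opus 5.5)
The inequality comes from comparing the Fourier--Jacobi expansion of \(G_{n^2}\) with its restriction to \(z=0\). By \cref{thm:fj-order}, \(e_n=\ord_{q_2}(G_{n^2})\), so the expansion starts at index \(e_n\):
\[
G_{n^2}(\tau)=\sum_{m\geq e_n}\phi_m(\tau_1,z)\,q_2^m .
\]
Setting \(z=0\) gives
\[
g_n(\tau_1,\tau_2)=\sum_{m\geq e_n}\phi_m(\tau_1,0)\,q_2^m .
\]
This substitution is legitimate because the diagonal matrices \(\diag(\tau_1,\tau_2)\) lie in \(\bH_2\) and the Fourier--Jacobi series converges locally uniformly there, so it may be specialised termwise.

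The restriction \(g_n\) is a modular form of weight \(k\) in \(\tau_2\) with coefficients depending holomorphically on \(\tau_1\). Its \(q_2\)-expansion at the cusp of the second factor is therefore exactly the series above, with coefficients \(\phi_m(\tau_1,0)\). By uniqueness of Fourier coefficients in \(\tau_2\), every coefficient of \(q_2^m\) with \(m<e_n\) vanishes. Hence \(\ord_{q_2}(g_n)\geq e_n\).

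If \(g_n\equiv 0\), we read \(\ord_{q_2}(g_n)=\infty\) and the inequality is trivial. Later, \(g_n\neq 0\) follows anyway, since \(H_1\not\subset H_{n^2}\) for \(n\geq 2\): the divisor of \(G_{n^2}\) is exactly \(H_{n^2}\), which does not contain \(H_1\).

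The only point needing care is that restricting to \(z=0\) commutes with the Fourier--Jacobi expansion, i.e.\ that the \(q_2\)-expansion of \(g_n\) is literally the specialization of the Fourier--Jacobi series. This follows from absolute convergence of the Fourier series of \(G_{n^2}\) on \(\bH_2\). The inequality can be strict, because \(\phi_{e_n}(\tau_1,0)\) may vanish identically. This possible strictness is why the lemma gives only an upper bound for \(e_n\), to be combined with the degree count of \cref{sec-4}.
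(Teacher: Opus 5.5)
Your proposal is correct and follows the paper's own argument: you use \cref{thm:fj-order} to start the Fourier--Jacobi expansion of \(G_{n^2}\) at index \(e_n\), then restrict termwise to \(z=0\), exactly as the paper does. The extra points you add are justifications the paper leaves implicit, namely the convergence needed to set \(z=0\) term by term, uniqueness of the \(q_2\)-coefficients, and the case \(g_n\equiv 0\).
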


\begin{proof}
Write the Fourier-Jacobi expansion \(G_{n^2}=\sum_{m\geq e_n}\phi_m(\tau_1,z)\,q_2^m\), with \(\phi_{e_n}\neq 0\) by \cref{thm:fj-order}. Restricting to \(z=0\) gives \(g_n=\sum_{m\geq e_n}\phi_m(\tau_1,0)\,q_2^m\), so every term of \(g_n\) has \(q_2\)-order at least \(e_n\).
\end{proof}

The inequality can be strict for a single form, since \(\phi_{e_n}(\tau_1,0)\) may vanish identically; the content of the main theorem is that it does not.

\begin{lem}
\label{lem:poly}
Let \(\Delta=\Delta(\tau_1)\), \(\Delta'=\Delta(\tau_2)\), \(j_1=j(\tau_1)\), \(j_2=j(\tau_2)\). Then
\[
f_n(j_1,j_2):=\frac{g_n}{(\Delta\Delta')^{k/12}}
\]
is a nonzero symmetric polynomial in \(\C[j_1,j_2]\), of degree
\(
\deg_{j_2}(f_n)	=\frac{k}{12}-\varsigma_n,
\)
where 
\(
\varsigma_n	:=\ord_{q_2}(g_n).
\)
\end{lem}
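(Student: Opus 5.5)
The plan is to treat \(g_n\) as a holomorphic modular form of weight \(k\) in each variable separately and apply the classical fact that a level-one modular form of weight \(k\equiv 0\pmod{12}\), divided by \(\Delta^{k/12}\), is a polynomial in \(j\) of degree at most \(k/12\). The degree is exactly \(k/12-\ord_q\), and the form vanishes identically iff the polynomial does.

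\textbf{Step 1: \(g_n/\Delta^{k/12}\) is a polynomial in \(j_1\).} Fix \(\tau_2\). Then \(\tau_1\mapsto g_n(\tau_1,\tau_2)\) lies in \(\cE_k\), and \(12\mid k\) by \cref{thm:weight-identity}. Hence \(g_n/\Delta^{k/12}\) is a weakly holomorphic modular function of weight zero, with a pole of order at most \(k/12\) at the cusp and no poles in \(\bH\), since \(\Delta\) has no zeros there. It is therefore a polynomial in \(j_1\) of degree at most \(k/12\). To see that the coefficients depend nicely on \(\tau_2\), use a basis of \(\cE_k\): write \(g_n=\sum_i a_i(\tau_2)\,E_4^{\alpha_i}E_6^{\beta_i}(\tau_1)\). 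Each coefficient \(a_i\) is obtained by linear functionals, namely finitely many Fourier coefficients in \(q_1\), so each \(a_i\) is itself a modular form of weight \(k\) in \(\tau_2\). Repeating the argument in \(\tau_2\) expresses \(g_n/(\Delta\Delta')^{k/12}\) as a polynomial in \(j_1\) and \(j_2\). Equivalently, \(g_n\) lies in \(\cE_k\otimes\cE_k\), and each factor \(\cE_k/\Delta^{k/12}\) equals the space of polynomials in \(j\) of degree \(\le k/12\).

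\textbf{Step 2: symmetry.} The swap \(\tau_1\leftrightarrow\tau_2\) preserves \(g_n\), as recorded before \cref{lem:restriction-bound}. It also preserves \(\Delta\Delta'\), so \(f_n(j_1,j_2)=f_n(j_2,j_1)\).

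\textbf{Step 3: nonvanishing.} This is the step that needs an actual argument. If \(g_n\equiv 0\), then \(G_{n^2}\) vanishes on \(\{z=0\}\), whose image is \(H_1\). Then \(H_1\subset\Div(G_{n^2})=H_{n^2}\), which is impossible because \(H_1\neq H_{n^2}\) are distinct irreducible divisors for \(n\ge 2\).

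\textbf{Step 4: the degree.} Write \(f_n=\sum_{d} p_d(j_1)\,j_2^{d}\). Since \(j_2=q_2^{-1}+744+O(q_2)\) and \(\Delta'^{k/12}=q_2^{k/12}(1+O(q_2))\), we get \(g_n=\sum_d p_d(j_1)\,\Delta^{k/12}\,q_2^{k/12-d}(1+O(q_2))\). The leading \(q_2\)-term comes from the largest \(d\) with \(p_d\neq 0\), and no cancellation can occur because distinct \(d\) give distinct leading exponents. Hence \(\ord_{q_2}(g_n)=k/12-\deg_{j_2}f_n\), which is the stated formula.
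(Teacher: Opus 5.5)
Your proposal is correct and follows the paper's proof essentially step for step. Nonvanishing comes from \(H_1\neq H_{n^2}\) being distinct irreducible divisors, and polynomiality in each variable comes from identifying \(\cE_k/\Delta^{k/12}\) with the polynomials in \(j\) of degree at most \(k/12\); your \(\cE_k\otimes\cE_k\) remark spells out what the paper compresses into ``the same argument in \(\tau_2\)''. Symmetry comes from the swap \(\tau_1\leftrightarrow\tau_2\), and the degree formula from the distinct \(q_2\)-orders \(k/12-d\) of \((\Delta')^{k/12}j_2^{\,d}\).
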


\begin{proof}
The form \(g_n\) is not identically zero. Indeed \(H_{n^2}\) and \(H_1\) are distinct
irreducible divisors, so \(H_{n^2}\) does not contain \(H_1\) and \(G_{n^2}\) does not vanish
identically on the product locus.

Fix a generic \(\tau_2\). The function \(\tau_1\mapsto g_n\) is a modular form of weight \(k\)
for \(\SL_2(\Z)\), hence a polynomial in \(E_4,E_6\) of weight \(k\). Dividing by
\(\Delta^{k/12}\) gives a meromorphic modular function of weight zero, holomorphic on \(\bH\),
with a pole of order at most \(k/12\) at the cusp. Such a function is a polynomial in \(j_1\)
of degree at most \(k/12\). The same argument in \(\tau_2\) gives \(f_n\in\C[j_1,j_2]\),
symmetric by the symmetry of \(g_n\).

For the degree, \(\Delta'=q_2+O(q_2^2)\) and \(j_2=q_2^{-1}+744+O(q_2)\). Write
\(f_n=\sum_{d}\gamma_d(j_1)\,j_2^{\,d}\). The \(q_2\)-order of \((\Delta')^{k/12}j_2^{\,d}\) is
\(k/12-d\). Distinct \(d\) therefore contribute distinct \(q_2\)-orders, so
\(\ord_{q_2}(g_n)=k/12-\deg_{j_2}(f_n)\).
\end{proof}

\begin{lem}
\label{lem:cycle}
Let \(n\geq 3\) and let \(f_n\) be as in \cref{lem:poly}. Then
\(
\deg_{j_2}(f_n)\;\geq\;\widetilde{R}(n).
\)
\end{lem}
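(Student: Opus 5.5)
The plan is to read off the zero divisor of \(f_n\) on the affine plane \(\bA^2_{(j_1,j_2)}\). We compare it with the intersection \(H_{n^2}\cap H_1\), pulled back to the product locus, and then count degrees in \(j_2\).

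Since \(\Delta\Delta'\) has no zeros on \(\bH\times\bH\), the zero set of \(f_n(j(\tau_1),j(\tau_2))\) on \(\bH\times\bH\) is exactly the zero set of \(g_n\), that is, the preimage of \(H_{n^2}\cap H_1\) under \((\tau_1,\tau_2)\mapsto\diag(\tau_1,\tau_2)\). By the discussion opening \cref{sec-4}, this set is the union of the isogeny curves \(T_M\) with \(M=(n^2-x^2)/(4u^2)\) as \((x,u)\) runs over the pairs in the definition of \(\widetilde{R}(n)\). Each such \(T_M\) is cut out by the irreducible polynomial \(\Phi_M\), so \(\Phi_M\mid f_n\) for every \(M\) that occurs. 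This alone gives only \(\deg_{j_2} f_n\ge\sum_{M\text{ distinct}}\psi(M)\). The statement requires the full sum, in which the same \(M\) can arise from several pairs: \(x\) and \(-x\) always give the same \(M\), and different \((x,u)\) may coincide as well. We therefore need \(\ord_{\Phi_M}(f_n)\ge\#\{(x,u):M(x,u)=M\}\).

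To get these multiplicities, I will work locally on \(\bH_2\) near a generic point \(\tau^0=\diag(\tau_1^0,\tau_2^0)\) with \(\tau_2^0=P\tau_1^0\) on a branch of \(T_M\). For each pair \((x,u)\) with \(M(x,u)=M\), I will write down an explicit primitive quintuple \((a,b,c,d,e)\) of discriminant \(n^2\) with \(b=x\), for instance in the normalized shape \(a\tau_1+xz+c\tau_2+d(z^2-\tau_1\tau_2)+e=0\). Restricted to \(z=0\), this relation becomes the graph equation of a \(\Gamma_0\)-translate of the \(M\)-isogeny through \(\tau^0\). The key claim is that distinct pairs \((x,u)\), in particular \(\pm x\), give distinct smooth hypersurfaces through \(\tau^0\) inside \(\bH_2\). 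Their tangent hyperplanes differ in the \(z\)-coefficient \(x\) while sharing the trace on \(\{z=0\}\). Each is a component of the preimage of \(H_{n^2}\), and \(G_{n^2}\) vanishes to order at least \(1\) on each. So near \(\tau^0\), \(G_{n^2}\) is divisible by \(\prod_{(x,u)}\ell_{x,u}\), where \(\ell_{x,u}\) is a local defining function. Restricting to \(z=0\), each \(\ell_{x,u}|_{z=0}\) is a unit times the same local equation \(h\) of the branch of \(T_M\), so \(g_n\) vanishes to order at least the number of pairs along that branch. Transporting this to \((j_1,j_2)\) coordinates through the local biholomorphism \(j\times j\) at a generic point gives \(\Phi_M^{\,r_M}\mid f_n\), where \(r_M\) is the number of pairs with \(M(x,u)=M\). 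The case \(x=0\) is counted once, matching the single hypersurface it gives.

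Summing these contributions gives \(\deg_{j_2}f_n\ge\sum_M r_M\,\psi(M)=\widetilde{R}(n)\), since \(\deg_{j_2}\Phi_M=\psi(M)\) and the \(\Phi_M\) are pairwise coprime. The main obstacle is the middle step. It requires exhibiting, for each pair \((x,u)\), a primitive quintuple passing through a common generic point of the fixed branch of \(T_M\), and showing that the hypersurfaces attached to different pairs are genuinely different branches there. If the normalized quintuples are awkward to arrange, I would first try acting by \(\SL_2(\Z)\times\SL_2(\Z)\subset\Sp_4(\Z)\) to move all of them onto the standard branch \(\tau_2=M\tau_1\). Distinctness would then be checked via the \(z\)-coefficients \(b=x\) together with the divisibility data \(u\), which record the content of \((a,c,d,e)\).
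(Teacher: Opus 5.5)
Your proposal is correct and follows essentially the same route as the paper. You use one primitive quintuple per pair through a generic point of the branch, factor \(G_{n^2}\) in the local unique factorization domain by these pairwise distinct smooth germs, restrict to \(z=0\) where each factor vanishes simply along the branch, and sum \(r_M\psi(M)\) using \(\deg_{j_2}\Phi_M=\psi(M)\). The step you flag as the main obstacle is settled by the paper's explicit choice \(w_\pm=(up,\pm x,-us,ur,uq)\), where \(P=\bigl(\begin{smallmatrix}p&q\\ r&s\end{smallmatrix}\bigr)\) is a primitive integer matrix with \(\det P=M\); on the standard branch this is \((uM,\pm x,-u,0,0)\). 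This quintuple has discriminant \(x^2+4u^2M=n^2\), is primitive because \(\gcd(u,x)=1\), and restricts at \(z=0\) to \(\tau_2=P\tau_1\), exactly as your plan anticipates.
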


\begin{proof}
Index the pairs counted by \(\widetilde{R}(n)\) by the unordered pairs \(\{x,-x\}\). A pair
\((0,u)\) is counted once, and a pair with \(x\neq 0\) is counted twice, once for each sign.
Let \((|x|,u)\) be such an index and set \(M=(n^2-x^2)/(4u^2)\geq 1\). Fix a primitive integer
matrix \(P=\bigl(\begin{smallmatrix}p&q\\ r&s\end{smallmatrix}\bigr)\) with \(\det P=M\), and
define the quintuples
\(
w_\pm=\bigl(up,\ \pm x,\ -us,\ ur,\ uq\bigr).
\)
Their discriminants are \(x^2-4(up)(-us)-4(ur)(uq)=x^2+4u^2\det P=n^2\). They are also
primitive, since \(P\) is primitive and the greatest common divisor of the entries is therefore
\(\gcd(u,x)=1\). When \(x=0\) the condition \(\gcd(u,0)=u=1\) forces \(u=1\) and
\(w_+=w_-\). When \(x\neq 0\) the quintuples \(w_+\) and \(w_-\) are distinct and not
proportional.

Let \(D_w\subset\bH_2\) denote the divisor cut out by the singular relation
\cref{eq:singular-relation} attached to a primitive quintuple \(w\) of discriminant \(n^2\).
Each \(D_w\) is smooth. Indeed the gradient of the defining function has entries
\(a-d\tau_2\), \(b+2dz\), \(c-d\tau_1\) in the variables \(\tau_1,z,\tau_2\). Here
\(a-d\tau_2=0\) with \(d\neq 0\) would force \(\tau_2\in\R\), while \(d=0\) leaves
\((a,b,c)\neq(0,0,0)\). By \cref{sec-2} the preimage of \(H_{n^2}\) is the union of the
\(D_w\), so \(G_{n^2}\) vanishes identically on every \(D_w\).

At \(z=0\) the relation attached to \(w_\pm\) reads
\[
up\,\tau_1-us\,\tau_2+ur(-\tau_1\tau_2)+uq=0,
\]
that is,
\( \tau_2=\frac{p\tau_1+q}{r\tau_1+s}=P\tau_1.
\)
Hence \(D_{w_+}\cap\{z=0\}=D_{w_-}\cap\{z=0\}\) is the graph
\(\widetilde{C}=\{(\tau_1,0,P\tau_1)\}\), whose image in \((j_1,j_2)\) is \(T_M\).

Fix a point \(\omega\in\widetilde{C}\) generic in the following sense. The stabilizer of
\(\omega\) in \(\SL_2(\Z)\times\SL_2(\Z)\) is exactly \(\{\pm I\}\times\{\pm I\}\), and
\(\omega\) lies on no divisor \(D_w\) other than those containing \(\widetilde{C}\). Both
conditions exclude countably many proper analytic subsets of \(\widetilde{C}\). Let \(\cO\) be
the local ring of \(\bH_2\) at \(\omega\), a regular local ring and hence a unique
factorization domain. For each primitive quintuple \(w\) of discriminant \(n^2\) with
\(\widetilde{C}\subset D_w\), let \(h_w\in\cO\) be the defining function of \(D_w\). It is a
prime element since \(D_w\) is smooth, and non-associate quintuples give non-associate primes.
Now \(G_{n^2}\) lies in the intersection of the prime ideals \((h_w)\), which in a unique
factorization domain is the principal ideal generated by the product. Hence
\[
G_{n^2}=h_0\cdot\!\!\prod_{w:\ \widetilde{C}\subset D_w}\!\!h_w,\qquad h_0\in\cO.
\]
Restrict to \(z=0\). For every \(w=(a,b,c,d,e)\) in the product,
\(h_w|_{z=0}=a\tau_1+c\tau_2-d\tau_1\tau_2+e\) vanishes to order exactly \(1\) along
\(\widetilde{C}\), since its differential \((a-d\tau_2)\,d\tau_1+(c-d\tau_1)\,d\tau_2\) is
nonzero at \(\omega\). Hence \(g_n\) vanishes along \(\widetilde{C}\) at \(\omega\) to order at
least the number of quintuples \(w\) in the product.

That number is bounded below as follows. The index \((|x|,u)\) contributes \(w_+\) and
\(w_-\), which are distinct when \(x\neq 0\) and coincide when \(x=0\). It therefore
contributes two quintuples in the first case and one in the second, matching the number of
pairs counted by \(\widetilde{R}(n)\) with that index. Distinct indices \((|x|,u)\),
\((|x'|,u')\) with the same value of \(M\) give quintuples that are pairwise non-associate.
Indeed two quintuples built from the same \(P\) are proportional only if their second entries
agree up to sign and their matrix parts have equal content, that is, only if \(|x|=|x'|\) and
\(u=u'\). All these quintuples contain \(\widetilde{C}\) in their divisors and so appear in the
product. The number of quintuples in the product is therefore at least \(\mu_M\), the number of
pairs counted by \(\widetilde{R}(n)\) whose value is \(M\). Finally
\(\Delta(\tau_1)\Delta(\tau_2)\) does not vanish at \(\omega\), and the quotient map to the
\(j\)-coordinates is a local biholomorphism at \(\omega\), so the same lower bound holds for
\(\ord_{T_M}(f_n)\).

Summing over the distinct values \(M\) arising from pairs counted by \(\widetilde{R}(n)\), and
using that distinct \(M\) give distinct irreducible curves \(T_M\) of degree \(\psi(M)\) in
\(j_2\),
\[
\begin{split}
\deg_{j_2}(f_n)\;\geq\;\sum_M \ord_{T_M}(f_n)\cdot\psi(M)
&\;\geq\;\sum_M \mu_M\,\psi(M)\\
&\;=\;\sum_{(x,u)}\psi\Bigl(\frac{n^2-x^2}{4u^2}\Bigr)\;=\;\widetilde{R}(n).\qedhere
\end{split}
\]
\end{proof}

\begin{lem}
\label{lem:n2}
\(\deg_{j_2}(f_2)\;\geq\;2\,\widetilde{R}(2)=2\).
\end{lem}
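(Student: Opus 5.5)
For \(n=2\), \(\widetilde{R}(2)=1\). The only pair counted is \(x=0\), \(u=1\), with \(M=(4-0)/4=1\), so the intersection \(H_4\cap H_1\) is supported on the diagonal \(T_1=\{j_1=j_2\}\), which has \(\psi(1)=1\). The claim is therefore \(\ord_{T_1}(f_2)\geq 2\), and I will follow the local argument of \cref{lem:cycle}. In \cref{lem:cycle} the index \(x=0\) contributes a single quintuple \(w_+=w_-=(1,0,-1,0,0)\), taking \(P=I\). This yields only order one, so a second factor along \(T_1\) must come from somewhere else.

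The first source I would try is a second primitive quintuple of discriminant \(4\) whose divisor also contains \(\widetilde C=\{(\tau_1,0,\tau_1)\}\). Take \(w'=(a,b,c,d,e)\) with the condition \(a\tau_1+c\tau_1+d(-\tau_1^2)+e\equiv 0\), which forces \(d=0\), \(e=0\) and \(c=-a\). The discriminant condition is then \(b^2+4a^2=4\), with solutions \((a,b)=(\pm1,0)\) and \((0,\pm2)\). The second of these is not primitive, since \((0,2,0,0,0)\) is twice the \(H_1\) relation. So only \(w_+\) contains \(\widetilde C\), and this approach fails.

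The extra vanishing must therefore come from the symmetry instead. At a generic point \(\omega=(\tau_1,0,\tau_1)\), consider the involution \(\sigma\colon(\tau_1,z,\tau_2)\mapsto(\tau_2,z,\tau_1)\). It lies in \(\Sp_4(\Z)\) and preserves \(G_4\) with trivial automorphy factor, since \(k=60\) is even and the factor is \(\det\) of the swap matrix raised to the power \(k\). The local defining function \(h=\tau_1-\tau_2\) of \(D_{w_+}\) is anti-invariant: \(\sigma^*h=-h\). Write \(G_4=h^t\,u\) with \(h\nmid u\). Applying \(\sigma\) gives \(\sigma^*G_4=(-1)^t h^t\,\sigma^*u\). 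I then restrict to the fixed locus \(\{\tau_1=\tau_2\}\), which is exactly \(D_{w_+}\). There \(\sigma^*u=u\) on the fixed set, and \(u\) is not identically zero along it, so comparing the two sides gives \((-1)^t=1\). Since \(t\geq1\), this forces \(t\) to be even, hence \(t\geq 2\). This is the analogue of \cref{lem:2torsion-parity}: the parity argument there used \(z\mapsto -z\), and here it uses the swap.

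Restricting to \(z=0\), the restriction \(h|_{z=0}=\tau_1-\tau_2\) still vanishes to order exactly one along \(\widetilde C\). Hence \(g_2\) vanishes to order at least \(2\) along \(\widetilde C\). Because \(\Delta\Delta'\neq0\) and the map to \((j_1,j_2)\) is a local biholomorphism at a generic \(\omega\), this gives \(\ord_{T_1}(f_2)\geq2\). The result then follows from \(\deg_{j_2}(f_2)\geq \ord_{T_1}(f_2)\cdot\psi(1)\).

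The main obstacle is the parity step, specifically checking that \(\sigma\) acts on \(G_4\) without a sign. This needs \(G_4\) to be invariant rather than anti-invariant under the swap. Any level-one form satisfies \(G(\sigma\tau)=\det(\cdot)^kG(\tau)\) with \(\det=\pm1\), and \(k\) is even by \cref{sec-2}, so the factor is \(+1\). A direct alternative is to use the symmetry of \(g_2\) noted after \cref{defn:restriction}. The function \(f_2\) is symmetric in \((j_1,j_2)\), and \(j_1-j_2\) is anti-symmetric, so the exact power of \(j_1-j_2\) dividing \(f_2\) leaves a symmetric quotient only if that power is even. Since \(f_2\) vanishes on \(T_1\), the power is at least \(1\), hence at least \(2\). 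This one-line version is the argument I would actually write.
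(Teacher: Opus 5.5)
Your final argument is the paper's: you get $f_2|_{T_1}=0$ from the quintuple $(1,0,-1,0,0)$, and the symmetry of $f_2$ against the antisymmetry of $j_1-j_2$ forces a second factor. The one point to state explicitly is that an odd exact exponent would leave an antisymmetric quotient, which vanishes on the diagonal and so contradicts exactness. Your swap-involution argument on $\bH_2$ is a correct upstairs version of the same parity, and your check that $(1,0,-1,0,0)$ is the only primitive quintuple of discriminant $4$ through $\widetilde{C}$ correctly explains why the method of \cref{lem:cycle} yields only order one here.
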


\begin{proof}
The single pair counted by \(\widetilde{R}(2)\) is \((x,u)=(0,1)\), with \(M=1\) and \(T_1=\{j_1=j_2\}\). The quintuple \(w=(1,0,-1,0,0)\) is primitive of discriminant \(4\), and its singular relation \cref{eq:singular-relation} reads \(\tau_1=\tau_2\). By \cref{sec-2} the divisor \(D_w\) lies in the preimage of \(H_4\), on which \(G_4\) vanishes; restricting to \(z=0\) shows that \(g_2\) vanishes on the diagonal \(\{\tau_1=\tau_2\}\subset\bH\times\bH\). The diagonal maps onto \(T_1\) under \((\tau_1,\tau_2)\mapsto(j_1,j_2)\), so \(f_2\) vanishes on the irreducible curve \(T_1\) and \(j_1-j_2\) divides \(f_2\) in \(\C[j_1,j_2]\). Write \(f_2=(j_1-j_2)h\). By \cref{lem:poly} the polynomial \(f_2\) is symmetric, and \(j_1-j_2\) is antisymmetric, so \(h\) is antisymmetric; hence \(h\) vanishes on \(\{j_1=j_2\}\) and \(j_1-j_2\) divides \(h\). Thus \((j_1-j_2)^2\) divides \(f_2\) and \(\deg_{j_2}(f_2)\geq 2\).
\end{proof}

\begin{thm}[Main Theorem]
\label{thm:main}
For every \(n\geq 2\), \(e_n=\nu(n)\), and there is \(c\in\C^\times\) with
\[
G_{n^2}\;=\;c\;\chi_{10}^{\,\nu(n)}\,F_n(\tau).
\]
Equivalently, \(\degw F_2=30\), and for \(n\geq 3\)
\begin{equation}
\label{eq:degree}
\degw F_n \;=\; k(H_{n^2})\;-\;5\,\varphi(n)\,\psi(n)\;=\;\varphi(n)\,\psi(n)\;+\;12\,\widetilde{R}(n).
\end{equation}
\end{thm}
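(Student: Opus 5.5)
The plan is to squeeze \(e_n\) between the lower bound of \cref{prop:lower-bound} and an upper bound coming from the restriction \(g_n\) to the product locus. By \cref{lem:restriction-bound} and \cref{lem:poly},
\[
e_n\;\leq\;\varsigma_n=\ord_{q_2}(g_n)=\frac{k}{12}-\deg_{j_2}(f_n),\qquad k=k(H_{n^2}).
\]
It therefore suffices to show \(\deg_{j_2}(f_n)\geq k/12-\nu(n)\). Combined with \(e_n\geq\nu(n)\), this gives \(e_n=\nu(n)\), and it also forces equality in \cref{lem:cycle} and \cref{lem:n2}.

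For \(n\geq 3\), \cref{thm:weight-identity} gives \(k/12=\nu(n)+\widetilde{R}(n)\), and \cref{lem:cycle} gives \(\deg_{j_2}(f_n)\geq\widetilde{R}(n)\). Hence \(\varsigma_n\leq\nu(n)\), so \(\nu(n)\leq e_n\leq\varsigma_n\leq\nu(n)\).

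For \(n=2\), \cref{thm:weight-identity} gives \(k(H_4)=60\), so \(k/12=5\). \Cref{lem:n2} gives \(\deg_{j_2}(f_2)\geq 2\), hence \(\varsigma_2\leq 3=\nu(2)\), and again \(e_2=3\).

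With \(e_n=\nu(n)\) established, \cref{cor:exact-form} immediately yields \(G_{n^2}=c\,\chi_{10}^{\nu(n)}F_n(\tau)\), and \cref{thm:closed} gives \(\degw F_n=k(H_{n^2})-10\nu(n)\). For \(n=2\) this is \(60-30=30\). For \(n\geq 3\) we substitute \(10\nu(n)=5\varphi(n)\psi(n)\) and \(k=6\varphi(n)\psi(n)+12\widetilde{R}(n)\) from \cref{thm:weight-identity}. This gives \(\degw F_n=\varphi(n)\psi(n)+12\widetilde{R}(n)\), which is \cref{eq:degree}.

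The substantive work is already contained in \cref{lem:cycle}, the multiplicity count of \(f_n\) along the curves \(T_M\), together with the exact match supplied by the weight identity. The only point requiring care here is the bookkeeping for \(n=2\): \cref{thm:weight-identity} uses the doubled coefficient \(24\widetilde{R}(2)\) there, and the antisymmetry argument of \cref{lem:n2} supplies exactly the matching factor \(2\). The final step is to check that the divisibility \(12\mid k\) used in \cref{lem:poly} holds in both cases.
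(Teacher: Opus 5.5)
Your proposal is correct and essentially identical to the paper's proof. It uses the same sandwich \(\nu(n)\leq e_n\leq\varsigma_n=k/12-\deg_{j_2}(f_n)\), closed by \cref{lem:cycle} for \(n\geq 3\) and by \cref{lem:n2} for \(n=2\) through \cref{thm:weight-identity}, and then \cref{cor:exact-form} and \cref{thm:closed} give the factorization and the degree formula. The divisibility \(12\mid k\) that you leave as a final check is already part of the statement of \cref{thm:weight-identity}, so nothing further is needed.
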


\begin{proof}
Let \(n=2\). By \cref{lem:restriction-bound}, \cref{lem:poly}, \cref{lem:n2}, and \cref{thm:weight-identity},
\[
e_2\;\leq\;\varsigma_2\;=\;\frac{k(H_4)}{12}-\deg_{j_2}(f_2)\;\leq\;\frac{k(H_4)}{12}-2\,\widetilde{R}(2)\;=\;\nu(2).
\]
Let \(n\geq 3\). By \cref{lem:restriction-bound}, \cref{lem:poly}, \cref{lem:cycle}, and \cref{thm:weight-identity},
\[
e_n\;\leq\;\varsigma_n\;=\;\frac{k(H_{n^2})}{12}-\deg_{j_2}(f_n)\;\leq\;\frac{k(H_{n^2})}{12}-\widetilde{R}(n)\;=\;\nu(n).
\]
By \cref{prop:lower-bound}, \(e_n\geq\nu(n)\) for every \(n\geq 2\). The factorization and the first equality in \cref{eq:degree} follow from \cref{cor:exact-form} and \cref{thm:closed}; the second equality is \cref{thm:weight-identity}.
\end{proof}

\begin{cor}
\label{cor:elementary}
For every \(n\geq 3\),
\[
\degw F_n \;=\; n^2\prod_{p\mid n}\bigl(1-p^{-2}\bigr)\;+\;12\sum_{\substack{x\in\Z,\ |x|<n\\ x\equiv n\ (\mathrm{mod}\ 2)}}\ \sum_{\substack{u\geq 1,\ \gcd(u,x)=1\\ u^2\mid (n^2-x^2)/4}}\psi\Bigl(\frac{n^2-x^2}{4u^2}\Bigr).
\]
\end{cor}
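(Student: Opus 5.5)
This is essentially a rewriting of the second equality in \cref{eq:degree} from \cref{thm:main}, with each ingredient made explicit. The plan has three steps.

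First, I invoke \cref{thm:main} for \(n\geq 3\), which gives
\[
\degw F_n=\varphi(n)\,\psi(n)+12\,\widetilde{R}(n).
\]

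Second, I rewrite \(\varphi(n)\psi(n)\) as \(n^2\prod_{p\mid n}(1-p^{-2})\). This is the index computation already recorded in \cref{subsec:modular-curves}. Directly,
\[
\varphi(n)\psi(n)=n\prod_{p\mid n}(1-p^{-1})\cdot n\prod_{p\mid n}(1+p^{-1})=n^2\prod_{p\mid n}(1-p^{-2}).
\]

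Third, I substitute the defining double sum of \(\widetilde{R}(n)\) from \cref{sec-4}, keeping the same conventions: \(x\) ranges over both signs, and \(x=0\) appears once in the range \(|x|<n\). This gives exactly the displayed expression.

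There is no real obstacle. The only care needed is to check that the summation conventions in the corollary (over \(x\), and over \(u\) with \(\gcd(u,x)=1\), which forces \(u=1\) when \(x=0\)) match the definition of \(\widetilde{R}(n)\) verbatim.

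If a more self-contained cross-check is wanted, I would evaluate both sides at \(n=3\) and \(n=5\):
\[
\degw F_3=8+12\cdot 6=80,\qquad \degw F_5=24+12\cdot 38=480.
\]
These agree with the known degrees quoted in the introduction.
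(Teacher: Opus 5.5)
Your proposal is correct and follows the paper's own proof: cite the second equality in \cref{eq:degree}, identify \(\varphi(n)\psi(n)\) with \(n^2\prod_{p\mid n}(1-p^{-2})\), and unfold the definition of \(\widetilde{R}(n)\) from \cref{sec-4}. Your checks at \(n=3\) and \(n=5\), giving \(80\) and \(480\), are also correct.
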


\begin{proof}
The first term is \(\varphi(n)\psi(n)=[\SL_2(\Z):\Gamma_1(n)]\) as in \cref{subsec:modular-curves}, and the double sum is the definition of \(\widetilde{R}(n)\) in \cref{sec-4}; the identity is \cref{eq:degree}.
\end{proof}

\begin{cor}
\label{cor:j10-degree}
For every \(n\geq 2\) the weighted degree \(\degw F_n\) is divisible by \(10\), the coefficient of \(J_{10}^{\degw F_n/10}\) in \(F_n\) is a nonzero constant, and
\[
\deg_{J_{10}} F_n \;=\; \frac{\degw F_n}{10} \;=\; \frac{k(H_{n^2})}{10}\;-\;\nu(n).
\]
In particular \(10\mid k(H_{n^2})\) for every \(n\), and \(\deg_{J_{10}}F_n = 3, 8, 18, 48\) for \(n=2,3,4,5\).
\end{cor}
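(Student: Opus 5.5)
The plan is to read off the \(J_{10}\)-degree from the boundary behaviour of \(F_n(\tau)\) established in \cref{thm:main}, using the isomorphism \(\Theta\) of \cref{lem:igusa-relations}. Write \(F_n=\sum c_{\alpha}J_2^{\alpha_1}J_4^{\alpha_2}J_6^{\alpha_3}J_{10}^{\alpha_4}\) with \(2\alpha_1+4\alpha_2+6\alpha_3+10\alpha_4=\degw F_n\).

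\emph{Step 1: pole orders of monomials.} Under \(\Theta\), \(J_2,J_4,J_6\) go to polynomials in \(T=\chi_{12}/\chi_{10}\) of degrees \(1,2,3\) with nonzero leading coefficients \(a,b_2\) (possibly zero) and \(c_3\) (possibly zero). The useful bookkeeping is the following. Substitute into \(G_{n^2}=c\,\chi_{10}^{\nu(n)}F_n(\tau)\). By the proof of \cref{thm:fj-order}, the minimum of \(u+v\) over the support of \(G_{n^2}\) in \(\psi_4^a\psi_6^b\chi_{10}^u\chi_{12}^v\) equals \(e_n=\nu(n)\). Moreover, all exponents \(e_n+j-i\) of \(\chi_{10}\) are nonnegative, where \(i\) is the \(T\)-degree and \(j\) the \(\chi_{10}\)-degree in \(Q=\Theta(F_n)\).

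\emph{Step 2: weight comparison.} Consider the monomial \(\psi_4^a\psi_6^b\chi_{10}^u\chi_{12}^v\) in \(G_{n^2}\), of weight \(k=4a+6b+10u+12v\). The terms with \(a=b=0\) and \(v=0\) give \(\chi_{10}^{k/10}\) exactly when \(10\mid k\). The key observation is to show that \(Q\) contains the pure monomial \(\chi_{10}^{\,\degw F_n/10}\), i.e.\ \(F_n\) has a nonzero \(J_{10}^{\degw F_n/10}\) term. To see this I would evaluate at a point where \(J_2=J_4=J_6=0\) but \(J_{10}\neq0\): the curve \(y^2=x^5-1\), with automorphism group of order \(10\), has \(J_2=J_4=J_6=0\). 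Since every monomial other than the pure \(J_{10}\)-power contains \(J_2,J_4\) or \(J_6\), we get \(F_n(0,0,0,1)=\) the coefficient of \(J_{10}^{\degw F_n/10}\) (zero if \(10\nmid\degw F_n\)). So it suffices that \(y^2=x^5-1\) does \emph{not} lie in \(\cL_n\) for any \(n\).

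\emph{Step 3 (main obstacle).} Show \(\Jac(y^2=x^5-1)\) is not on any \(H_{n^2}\). Its Jacobian has CM by \(\Z[\zeta_5]\), a CM field of degree four containing no imaginary quadratic subfield, so the Jacobian is simple (it is the primitive CM type case). A simple abelian surface has no elliptic subcover, hence is not in \(\cL_n\), and not in \(H_1\) either. Thus \(F_n(0,0,0,1)\neq0\), which forces \(10\mid\degw F_n\), gives nonvanishing of the \(J_{10}^{\degw F_n/10}\) coefficient, and shows \(\deg_{J_{10}}F_n=\degw F_n/10\) (no monomial can have larger \(J_{10}\)-exponent by weight). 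Then \cref{thm:main} gives \(\degw F_n/10=k(H_{n^2})/10-\nu(n)\), so \(10\mid k(H_{n^2})\). The numerical values follow from \(\degw F_n=30,80,180,480\). The delicate point is justifying simplicity via the CM type, which I would cite as standard (Shimura–Taniyama: a primitive CM type yields a simple abelian variety, and for a cyclic quartic CM field every CM type is primitive).
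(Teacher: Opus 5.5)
Your argument is correct and is essentially the paper's. Both proofs note that \(F_n(0,0,0,1)\) equals the coefficient of \(J_{10}^{\degw F_n/10}\), identify \([0:0:0:1]\) as \(I(y^2=x^5-1)\), and conclude that this coefficient is nonzero because \(\Jac(y^2=x^5-1)\) is simple and \(\cL_n\) is closed in \(\cM_2\). You use the latter fact, \(V(F_n)\cap\cM_2=\cL_n\), only implicitly, and your Steps~1--2 play no role and can be dropped. The one difference is how simplicity is justified: you invoke Shimura--Taniyama, noting that every CM type of \(\Q(\zeta_5)\) is primitive since its only quadratic subfield \(\Q(\sqrt5)\) is real. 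The paper instead checks directly that \(\Q(\zeta_5)\) cannot embed in any possible algebra \(\End^0(E_1\times E_2)\).
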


\begin{proof}
Write \(F_n=\sum_{e\geq 0} c_e(J_2,J_4,J_6)\,J_{10}^{\,e}\). Weighted homogeneity gives \(10e\leq\degw F_n\) for every \(e\) with \(c_e\neq 0\); hence \(\deg_{J_{10}}F_n\leq\degw F_n/10\), with equality if and only if \(F_n(0,0,0,1)\neq 0\), and in that case the extremal coefficient has weight zero, so it is a constant, and \(10\mid\degw F_n\).

The point \([0:0:0:1]\) is the unique point of \(\bP(2,4,6,10)\) with \(J_2=J_4=J_6=0\); it is the moduli point of the curve \(C_0\colon y^2=x^5-1\) \cite{Igusa60}. The automorphism \((x,y)\mapsto(\zeta_5 x,y)\) gives an embedding \(\Q(\zeta_5)\hookrightarrow\End^0(\Jac(C_0))\). If \(\Jac(C_0)\) were isogenous to a product \(E_1\times E_2\) of elliptic curves, then \(\Q(\zeta_5)\) would embed into \(\End^0(E_1\times E_2)\), which is one of \(\Q\times\Q\), \(\Q\times K\), \(K_1\times K_2\), \(M_2(\Q)\), \(M_2(K)\) with \(K,K_1,K_2\) imaginary quadratic; a field mapping to a product embeds into one factor, and every maximal subfield of \(M_2(\Q)\) or \(M_2(K)\) is quadratic over the center, hence contains the center. Since \(\Q(\zeta_5)\) is quartic and its only quadratic subfield \(\Q(\sqrt{5})\) is real, no such embedding exists. Hence \(\Jac(C_0)\) is simple and \([C_0]\notin\cL_n\) for every \(n\geq 2\).

The locus \(\cL_n\) is closed in \(\cM_2\), being the preimage of the closed surface \(H_{n^2}\subset\cA_2\) under the Torelli open immersion. Since \(\cM_2=\{J_{10}\neq 0\}\) is open in \(\bP(2,4,6,10)\) and contains \([0:0:0:1]\), the closure \(\overline{\cL}_n=V(F_n)\) satisfies \(\overline{\cL}_n\cap\cM_2=\cL_n\). Hence \(F_n(0,0,0,1)\neq 0\), and equality holds above. The identity \(\degw F_n=k(H_{n^2})-10\,\nu(n)\) of \cref{thm:main} gives the stated value and the divisibility \(10\mid k(H_{n^2})\).
\end{proof}

\begin{cor}
\label{cor:sharp}
Let \(n\geq 3\) and write \(\phi_m\) for the Fourier-Jacobi coefficients of \(G_{n^2}\). Then:

\begin{enumerate}
\item \(\phi_m=0\) for \(m<\nu(n)\), \(\phi_{\nu(n)}\neq 0\), and for generic \(\tau_1\) the divisor of \(z\mapsto\phi_{\nu(n)}(\tau_1,z)\) on \(\C/\Lambda_{\tau_1}\) is the reduced sum of the \(\varphi(n)\psi(n)\) points of exact order \(n\).

\item \(\varsigma_n=\nu(n)\), and the divisor of \(f_n\) is \(\sum_M\mu_M\,T_M\), where \(M\) runs over the values \((n^2-x^2)/(4u^2)\) of the pairs counted by \(\widetilde{R}(n)\) and \(\mu_M\) is the number of such pairs with that value: each curve \(T_M\) appears with multiplicity exactly \(2\) per unordered pair \(\{x,-x\}\) with \(x\neq 0\), and with multiplicity exactly \(1\) for \(x=0\).
\end{enumerate}
\end{cor}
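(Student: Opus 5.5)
The proof squeezes the inequalities already established for \(n\geq 3\) and reads off equality at every step. By \cref{thm:main}, \(e_n=\nu(n)\). By \cref{thm:fj-order}, \(e_n=\ord_{q_2}(G_{n^2})\), which gives \(\phi_m=0\) for \(m<\nu(n)\) and \(\phi_{\nu(n)}\neq 0\). For part (2), the proof of \cref{thm:main} shows
\[
e_n\le\varsigma_n=\tfrac{k}{12}-\deg_{j_2}(f_n)\le\tfrac{k}{12}-\widetilde{R}(n)=\nu(n),
\]
with both ends equal to \(\nu(n)\). Hence \(\varsigma_n=\nu(n)\) and \(\deg_{j_2}(f_n)=\widetilde{R}(n)\).

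Next, the chain in the final display of \cref{lem:cycle} becomes a chain of equalities:
\[
\deg_{j_2}(f_n)\ \ge\ \sum_M\ord_{T_M}(f_n)\,\psi(M)\ \ge\ \sum_M\mu_M\psi(M)=\widetilde{R}(n).
\]
Since \(\psi(M)>0\) and \(\ord_{T_M}(f_n)\ge\mu_M\) termwise, equality of the second inequality forces \(\ord_{T_M}(f_n)=\mu_M\) for every \(M\). Equality of the first inequality says the \(j_2\)-degree is exhausted by the curves \(T_M\). If \(f_n\) had another irreducible factor \(h\), then \(h\) would have \(\deg_{j_2}h=0\), so \(h\) would be a nonconstant polynomial in \(j_1\) alone. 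This is excluded by symmetry: \(f_n\) is symmetric and its \(j_2\)-degree is accounted for, so \(\deg_{j_1}\) is accounted for in the same way (each \(T_M\) also has \(j_1\)-degree \(\psi(M)\)), and this rules out factors in \(j_1\) alone. Hence \(\Div f_n=\sum_M\mu_M T_M\). The description of the multiplicities, two per unordered pair \(\{x,-x\}\) with \(x\neq0\) and one for \(x=0\), is the indexing already used in \cref{lem:cycle}.

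For the divisor claim in (1), fix generic \(\tau_1\) with \(\phi_{\nu(n)}(\tau_1,\cdot)\not\equiv0\). By \cref{prop:torsion-vanishing} it vanishes at the \(\varphi(n)\psi(n)=2\nu(n)\) distinct points of exact order \(n\). A nonzero index-\(\nu(n)\) Jacobi form has exactly \(2\nu(n)\) zeros with multiplicity \cite[Thm.~1.2]{EichlerZagier}, so these are all of its zeros, each simple, and the divisor is the reduced sum. One point needs care: genericity must also ensure that \(\tau_1\) avoids the zero set of the modular form \(\phi_{\nu(n)}(\tau_1,x)\) in \(\tau_1\). This is automatic, since the divisor count holds for every \(\tau_1\) at which the function is not identically zero in \(z\), and that set is the complement of a discrete set.

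The main obstacle is excluding extra factors of \(f_n\) purely in \(j_1\) (or \(j_2\)), which the \(j_2\)-degree count alone does not see. I would handle it by symmetry: if \(h(j_1)\mid f_n\), then \(h(j_2)\mid f_n\) as well, and this adds positive \(j_2\)-degree beyond \(\sum_M\mu_M\psi(M)\), contradicting equality. A second point to check is that no \(T_M\) arises from a value \(M\) outside the list; such a curve would also add \(j_2\)-degree, so the same equality excludes it.
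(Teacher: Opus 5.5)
Your proof is correct and follows the paper's argument. You squeeze the chains from \cref{thm:main} and \cref{lem:cycle}, read off \(\ord_{T_M}(f_n)=\mu_M\) termwise, and for part (1) use the zero count of \cite[Thm.~1.2]{EichlerZagier}. The only difference is how the residual factor \(h(j_1)=f_n/\prod_M\Phi_M^{\mu_M}\) is shown to be constant: you compare \(j_1\)-degrees using the symmetry of \(f_n\), which is the argument the paper itself uses in \cref{cor:boundary-product}, whereas the paper's proof of this corollary shows \(\prod_M\Phi_M^{\mu_M}\) is symmetric via the evenness of \(\mu_1\) and concludes \(h(j_1)=h(j_2)\); your route avoids having to deal with the antisymmetry of \(\Phi_1\).
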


\begin{proof}
For (1), the equality \(e_n=\nu(n)\) in \cref{thm:main}, together with
\cref{thm:fj-order}, gives \(\phi_m=0\) for \(m<\nu(n)\) and
\(\phi_{\nu(n)}\neq 0\). For generic \(\tau_1\), the function
\(\phi_{\nu(n)}(\tau_1,\cdot)\) is therefore not identically zero. By
\cref{prop:torsion-vanishing}, it vanishes at every point of exact
order \(n\) on \(\C/\Lambda_{\tau_1}\). These are
\(\varphi(n)\psi(n)=2\nu(n)\) distinct points, while a nonzero Jacobi
form of index \(\nu(n)\) has exactly \(2\nu(n)\) zeros, counted with
multiplicity \cite[Thm.~1.2]{EichlerZagier}. Hence all these zeros are
simple and there are no others, proving (1).

For (2), by \cref{thm:main} the chain of inequalities in its proof
collapses, so
\(\varsigma_n=\nu(n)\) and
\(\deg_{j_2}(f_n)=\widetilde{R}(n)\).
Moreover, the argument proving
\(\deg_{j_2}(f_n)\geq\widetilde{R}(n)\) gives
\[
\deg_{j_2}(f_n)
\;\geq\;
\sum_M\ord_{T_M}(f_n)\,\psi(M)
\;\geq\;
\sum_M\mu_M\,\psi(M)
\;=\;
\widetilde{R}(n).
\]
Since the two endpoints are equal, both inequalities are equalities.
The second inequality is a termwise comparison, because
\(\ord_{T_M}(f_n)\geq\mu_M\) for every \(M\). Since
\(\psi(M)>0\), equality of the weighted sums implies
\(\ord_{T_M}(f_n)=\mu_M\) for every \(M\).

Set \(\Pi:=\prod_M\Phi_M^{\mu_M}\). Then \(\Pi\) divides \(f_n\), and
the quotient \(h:=f_n/\Pi\) has degree zero in \(j_2\), hence lies in
\(\C[j_1]\). For \(n\geq 3\), the multiplicity \(\mu_1\) is even.
Indeed, a pair counted by \(\widetilde{R}(n)\) with \(x=0\) and
\(M=1\) would force \(n=2u\) and
\(u=\gcd(0,u)=1\), hence \(n=2\). Thus every occurrence of \(M=1\)
comes from a pair with \(x\neq 0\) and its mirror. Each \(\Phi_M\)
with \(M\geq 2\) satisfies
\(\Phi_M(j_2,j_1)=\Phi_M(j_1,j_2)\), while
\(\Phi_1(j_2,j_1)=-\Phi_1(j_1,j_2)\). Consequently
\(\Pi(j_2,j_1)=(-1)^{\mu_1}\Pi(j_1,j_2)=\Pi(j_1,j_2)\).
Combined with the symmetry
\(f_n(j_2,j_1)=f_n(j_1,j_2)\) from \cref{lem:poly}, this gives
\[
h(j_1)
\;=\;
\frac{f_n(j_1,j_2)}{\Pi(j_1,j_2)}
\;=\;
\frac{f_n(j_2,j_1)}{\Pi(j_2,j_1)}
\;=\;
h(j_2)
\]
in \(\C(j_1,j_2)\), so \(h\) is a nonzero constant. Therefore
\(\Div(f_n)=\sum_M\mu_M\,T_M\), proving (2).
\end{proof}

\begin{cor}
\label{cor:f2}
\(f_2=c\,(j_1-j_2)^2\) for some \(c\in\C^\times\), that is,
\[
g_2(\tau_1,\tau_2)\;=\;c\,\bigl(\Delta(\tau_1)\Delta(\tau_2)\bigr)^{5}\bigl(j(\tau_1)-j(\tau_2)\bigr)^{2}.
\]
\end{cor}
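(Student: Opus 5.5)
The plan is to run the $n=2$ analogue of \cref{cor:sharp}(2), with the chain of inequalities in the $n=2$ case of \cref{thm:main} collapsing. That proof gives
\[
e_2\leq\varsigma_2=\tfrac{k(H_4)}{12}-\deg_{j_2}(f_2)\leq\tfrac{k(H_4)}{12}-2=\nu(2)=3.
\]
Since $e_2=\nu(2)$ by \cref{thm:main}, every inequality here is an equality. In particular $\deg_{j_2}(f_2)=2$, using $k(H_4)=60$ from \cref{thm:weight-identity}.

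Next, \cref{lem:n2} shows that $(j_1-j_2)^2$ divides $f_2$ in $\C[j_1,j_2]$. Write $f_2=(j_1-j_2)^2h$. Degrees in $j_2$ add in the polynomial ring $\C[j_1][j_2]$, which is a domain. Hence $\deg_{j_2}h=0$, so $h\in\C[j_1]$.

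Now $f_2$ is symmetric by \cref{lem:poly}, and $(j_1-j_2)^2$ is symmetric as well. Therefore $h(j_1)=h(j_2)$ in $\C(j_1,j_2)$, which forces $h$ to be a constant $c$. This constant is nonzero because $f_2\neq 0$ by \cref{lem:poly}. Undoing the definition $f_2=g_2/(\Delta\Delta')^{k/12}$ with $k/12=5$ gives the displayed formula for $g_2$.

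I expect no serious obstacle. The only point needing care is confirming that the equality $\deg_{j_2}(f_2)=2$ really follows from the collapse of the chain. This requires both $e_2=3$ and $k(H_4)=60$, and both are established earlier.
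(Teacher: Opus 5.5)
Your proof is correct and follows the paper's argument. The collapse of the $n=2$ chain in \cref{thm:main} gives $\deg_{j_2}(f_2)=2$, and \cref{lem:n2} together with the symmetry of $f_2$ forces the quotient by $(j_1-j_2)^2$ to be a nonzero constant; the paper applies symmetry to the degree in $j_1$, while you apply it as $h(j_1)=h(j_2)$, which is an equally valid variant.
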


\begin{proof}
By \cref{thm:main} the chain of inequalities for \(n=2\) collapses: \(\varsigma_2=\nu(2)=3\) and \(\deg_{j_2}(f_2)=2\). By \cref{lem:n2}, \((j_1-j_2)^2\) divides \(f_2\), and the quotient has degree zero in each variable, hence is a nonzero constant. The display restates \(f_2=c(j_1-j_2)^2\) via \cref{lem:poly} with \(k(H_4)/12=5\).
\end{proof}

The corollary matches the two halves of the weight identity: the divisor of \(f_n\), of total degree \(\widetilde{R}(n)\) in \(j_2\), is the intersection cycle \(H_{n^2}\cdot H_1\) in the coordinates \((j_1,j_2)\), and the divisor of the leading Fourier-Jacobi coefficient, of degree \(2\nu(n)\) on each elliptic curve, is the fiber of \(X_1(n)\to X(1)\) counted through both signs.

\section{Boundary data and reconstruction}
\label{sec-6}


Throughout, \(n\geq 2\), \(k=k(H_{n^2})\), and \(g_n\), \(f_n\), \(\mu_M\) are as in \cref{sec-5}, and \(T_M\), \(\Phi_M\) as in \cref{subsec:modular-curves}.

\begin{lem}
\label{cor:boundary-product}
Let \(n\geq 3\). Up to a nonzero scalar,
\begin{equation}
\label{eq:P}
g_n(\tau_1,\tau_2)\;=\;\bigl(\Delta(\tau_1)\,\Delta(\tau_2)\bigr)^{k/12}\,\prod_{M}\Phi_M\bigl(j(\tau_1),j(\tau_2)\bigr)^{\mu_M},
\end{equation}
the product over the values \(M=(n^2-x^2)/(4u^2)\) of the pairs counted by \(\widetilde{R}(n)\), with the multiplicities \(\mu_M\) of \cref{cor:sharp}. For \(n=2\) the same statement holds with \(\Phi_1=j_1-j_2\) and \(\mu_1=2\), which is \cref{cor:f2}.
\end{lem}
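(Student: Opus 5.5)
This is essentially a repackaging of \cref{lem:poly} and \cref{cor:sharp}(2), so the plan is to combine them directly. By \cref{lem:poly}, \(g_n=(\Delta(\tau_1)\Delta(\tau_2))^{k/12}f_n(j(\tau_1),j(\tau_2))\) with \(f_n\) a nonzero symmetric polynomial in \(\C[j_1,j_2]\). Therefore \eqref{eq:P} is equivalent to the polynomial identity
\[
f_n=c\prod_M\Phi_M^{\mu_M},\qquad c\in\C^\times .
\]

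For \(n\geq3\) this identity is exactly what the proof of \cref{cor:sharp}(2) establishes. The argument runs as follows.

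\begin{itemize}
\item For each \(M\) we have \(\ord_{T_M}(f_n)=\mu_M\), obtained by forcing equality in the chain of \cref{lem:cycle} using \(\deg_{j_2}f_n=\widetilde{R}(n)\) from \cref{thm:main}.
\item Each \(T_M\) is the irreducible curve \(\{\Phi_M=0\}\) and \(\C[j_1,j_2]\) is a UFD, so \(\Pi=\prod_M\Phi_M^{\mu_M}\) divides \(f_n\).
\item The quotient \(h=f_n/\Pi\) has \(j_2\)-degree zero, because \(\deg_{j_2}\Pi=\sum_M\mu_M\psi(M)=\widetilde{R}(n)=\deg_{j_2}f_n\).
\item By symmetry, \(h\) then also has \(j_1\)-degree zero. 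This uses that \(\mu_1\) is even, so \(\Pi\) is symmetric, while \(f_n\) is symmetric by \cref{lem:poly}.
\end{itemize}

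Hence \(h\) is a nonzero constant. I would cite that argument rather than redo it, noting only that the multiplicity \(\ord_{T_M}(f_n)\) in the \(j\)-plane coincides with the exponent of the irreducible factor \(\Phi_M\), since \(\Phi_M\) is irreducible and cuts out \(T_M\) reducedly.

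For \(n=2\) the claim is \cref{cor:f2}, with \(k(H_4)/12=5\), \(\Phi_1=j_1-j_2\) and \(\mu_1=2\).

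The only point needing care is the sign convention for \(\Phi_1\): it is antisymmetric, so symmetry of \(\Pi\) requires \(\mu_1\) even. That parity is already verified in \cref{cor:sharp} for \(n\geq3\), and for \(n=2\) it holds because \(\mu_1=2\). No further obstacle is expected; the scalar ambiguity absorbs the normalizations of the \(\Phi_M\).
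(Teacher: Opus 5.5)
Your proposal is correct and follows essentially the same route as the paper. Both take \(\Div(f_n)=\sum_M\mu_M T_M\) from \cref{cor:sharp}, deduce \(\Pi\mid f_n\) in the UFD \(\C[j_1,j_2]\), and match \(j_2\)-degrees through \(\widetilde{R}(n)=\sum_M\mu_M\psi(M)\). The one difference is in the \(j_1\)-direction: the paper uses \(\deg_{j_1}f_n=\deg_{j_2}f_n\), from the symmetry of \(f_n\), together with the fact that each \(\Phi_M\) also has degree \(\psi(M)\) in \(j_1\), so it does not need the symmetry of \(\Pi\) or the parity of \(\mu_1\) on which your \(h(j_1)=h(j_2)\) argument relies.
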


\begin{proof}
Set \(\Pi=\prod_M\Phi_M^{\mu_M}\). By \cref{cor:sharp}, \(\Div(f_n)=\sum_M\mu_M\,T_M\). Each \(\Phi_M\) is irreducible, so \(\Phi_M^{\mu_M}\) divides \(f_n\), and the \(\Phi_M\) are pairwise coprime, so \(\Pi\) divides \(f_n\). By  \cref{lem:poly}, 
\[
\deg_{j_2}(f_n)=\widetilde{R}(n)=\sum_M\mu_M\,\psi(M)=\deg_{j_2}(\Pi),
\]
 and by the symmetry of \(f_n\) the same holds in \(j_1\). The quotient \(f_n/\Pi\) is a polynomial of degree zero in each variable, hence a nonzero constant, and \cref{eq:P} restates \(f_n=c\,\Pi\) via \cref{lem:poly}.
\end{proof}

The irreducible components of the intersection cycle \(H_{n^2}\cdot H_1\) are the modular curves \(T_M\) of cyclic \(M\)-isogenies, each appearing with multiplicity \(\mu_M\) as determined by the pairs counted in \(\widetilde{R}(n)\) (\cref{cor:sharp} and \cref{cor:boundary-product}). Each such curve is birational to the classical modular curve \(X_0(M)\), which is its normalization, and is singular for \(M\geq 2\). Its geometric genus is therefore the genus of \(X_0(M)\), and \(X_0(M)\) has genus zero if and only if
\(
M\in\{1,2,3,4,5,6,7,8,9,10,12,13,16,18,25\}.
\)
Consequently, for every \(n\geq 2\) the genus-zero irreducible components of the boundary divisor \(H_{n^2}\cap H_1\) are precisely the curves \(T_M\) for those \(M\) that appear among the values \((n^2-x^2)/(4u^2)\) of the pairs counted by \(\widetilde{R}(n)\) and that lie in the list above; their multiplicities are the corresponding \(\mu_M\).

The right-hand side of \cref{eq:P} has weight \(k\) in each of \(\tau_1,\tau_2\), since \(12\mid k\) by \cref{thm:weight-identity}, and its \(q_2\)-order is
\[
k/12-\sum_M\mu_M\,\psi(M)=k/12-\widetilde{R}(n)=\nu(n),
\]
since each \(\Phi_M\) is monic of degree \(\psi(M)\) in \(j_2\); this recovers \(\varsigma_n=\nu(n)\)  from \cref{eq:P}.

The leading Fourier-Jacobi coefficient admits a closed formula as well. Let
\[
\theta_{11}(\tau_1,z)=\sum_{i\in\Z}(-1)^i\,q_1^{(2i+1)^2/8}\,\zeta^{(2i+1)/2}
\]
be the classical odd theta function; it vanishes exactly on \(\Lambda_{\tau_1}=\Z\tau_1+\Z\), to first order, and satisfies
\[
\theta_{11}(\tau_1,z+\ell\tau_1+\ell')=(-1)^{\ell+\ell'}\,e^{-\pi i(\ell^2\tau_1+2\ell z)}\,\theta_{11}(\tau_1,z),\qquad \ell,\ell'\in\Z.
\]

\begin{prop}
\label{prop:theta}
Let \(n\geq 3\) and let \(\tau_1\) be generic in the sense of \cref{cor:sharp}. Let \(E_n\subset\C/\Lambda_{\tau_1}\) be the set of the \(\varphi(n)\psi(n)\) points of exact order \(n\), a disjoint union of \(\nu(n)\) inversion orbits \(\{x,-x\}\). Then
\[
\phi_{\nu(n)}(\tau_1,z)\;=\;\phi_{\nu(n)}(\tau_1,0)\,\prod_{\{x,-x\}\subset E_n}\frac{\theta_{11}(\tau_1,z-x)\,\theta_{11}(\tau_1,z+x)}{-\,\theta_{11}(\tau_1,x)^2},
\]
and \(\phi_{\nu(n)}(\tau_1,0)=c\,\Delta(\tau_1)^{k/12}\) with \(c\in\C^\times\) independent of \(\tau_1\).
\end{prop}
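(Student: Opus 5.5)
Fix a generic \(\tau_1\) and write \(\nu=\nu(n)\), \(\phi=\phi_{\nu}\). The plan is to compare \(z\mapsto\phi(\tau_1,z)\) with the theta product \(P(z)=\prod_{\{x,-x\}\subset E_n}\theta_{11}(\tau_1,z-x)\,\theta_{11}(\tau_1,z+x)\), and then to identify the value at \(z=0\) as a modular form in \(\tau_1\).

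First, the divisor of \(P\) on \(\C/\Lambda_{\tau_1}\) is the reduced sum of the points of \(E_n\). This holds because \(\theta_{11}\) vanishes simply exactly on the lattice. By \cref{cor:sharp}(1), \(\phi\) has the same divisor.

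Next, the elliptic law. By the displayed transformation rule, each pair factor picks up
\[
(-1)^{2(\ell+\ell')}\,e^{-\pi i(2\ell^2\tau_1+4\ell z)}=e^{-2\pi i(\ell^2\tau_1+2\ell z)},
\]
since the \(\pm x\) contributions to the exponential cancel. So \(P\) satisfies the elliptic law of index \(\nu\), the same law as \(\phi\). Hence \(\phi/P\) is an elliptic function of \(z\) with no zeros or poles, and is therefore constant in \(z\).

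Evaluating at \(z=0\) gives \(P(0)=\prod\theta_{11}(\tau_1,-x)\theta_{11}(\tau_1,x)=\prod\bigl(-\theta_{11}(\tau_1,x)^2\bigr)\), since \(\theta_{11}\) is odd. The value \(\theta_{11}(\tau_1,x)\) is nonzero because \(x\notin\Lambda_{\tau_1}\). This yields the displayed product formula. It is independent of the choice of representative \(x\) in each orbit, since the factor for an orbit is symmetric under \(x\mapsto -x\). Changing \(x\) by a lattice vector multiplies the numerator and the denominator by the same factor; I would check this by the same transformation rule.

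For the second assertion, set \(h(\tau_1)=\phi(\tau_1,0)\). This is the restriction of a Jacobi form of weight \(k\) to \(z=0\), hence a holomorphic modular form of weight \(k\) for \(\SL_2(\Z)\), and it is nonzero by the formula just proved. It is also \(h(\tau_1)=\ord\)-leading coefficient of \(g_n\) in \(q_2\): since \(\varsigma_n=\nu\) by \cref{cor:sharp}(2), the coefficient of \(q_2^{\nu}\) in \(g_n=\sum_m\phi_m(\tau_1,0)q_2^m\) is \(h\).

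I would read this coefficient off \cref{cor:boundary-product}. Write \((\Delta')^{k/12}=q_2^{k/12}(1+O(q_2))\) and \(\prod_M\Phi_M^{\mu_M}=j_2^{\widetilde R(n)}(1+O(j_2^{-1}))\), using that each \(\Phi_M\) is monic in \(j_2\). Then \(j_2^{\widetilde R(n)}\sim q_2^{-\widetilde R(n)}\), and the leading coefficient is \(c\,\Delta(\tau_1)^{k/12}\) with \(c\) the scalar of \eqref{eq:P}.

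The main obstacle is this last step. One must make sure the lower-order terms in \(j_2\) of \(\Phi_M\), whose coefficients are polynomials in \(j_1\), do not contribute to the \(q_2^{\nu}\) coefficient. They do not: a term \(j_2^{d}\) with \(d<\widetilde R(n)\) has \(q_2\)-order strictly greater than \(\nu\) after multiplying by \((\Delta')^{k/12}\), exactly as in the proof of \cref{lem:poly}. Genericity of \(\tau_1\) is used only to invoke \cref{cor:sharp}(1). The identity \(h=c\Delta^{k/12}\) then extends to all \(\tau_1\) by analyticity.
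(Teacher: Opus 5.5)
Your proposal is correct and follows the paper's proof essentially step for step. Like the paper, you match divisors via \cref{cor:sharp}(1) and the index-\(\nu(n)\) elliptic laws, so the ratio is constant in \(z\), and you normalize it at \(z=0\). You then read off the \(q_2^{\nu(n)}\) coefficient of \cref{eq:P} using \(k/12-\widetilde{R}(n)=\nu(n)\) and the leading coefficients of the \(\Phi_M\) in \(j_2\).

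One harmless imprecision: \(\Phi_1=j_1-j_2\) has leading coefficient \(-1\) in \(j_2\), not \(1\), which the paper records explicitly. This can only change the sign of \(c\), and in fact \(\mu_1\) is even for \(n\geq 3\). Your check that the product does not depend on the choice of orbit representatives modulo \(\Lambda_{\tau_1}\) is a small addition the paper omits.
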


\begin{proof}
For each orbit the transformation law gives
\[
\begin{split}
&\theta_{11}(\tau_1,z-x+\ell\tau_1+\ell')\,\theta_{11}(\tau_1,z+x+\ell\tau_1+\ell')\\
&\qquad=e^{-2\pi i(\ell^2\tau_1+2\ell z)}\,\theta_{11}(\tau_1,z-x)\,\theta_{11}(\tau_1,z+x),
\end{split}
\]
the \(x\)-dependence of the two exponential factors cancelling; so the product over the \(\nu(n)\) orbits transforms under \(z\mapsto z+\ell\tau_1+\ell'\) with the factor \(e^{-2\pi i\nu(n)(\ell^2\tau_1+2\ell z)}\), the elliptic law of index \(\nu(n)\) satisfied by \(\phi_{\nu(n)}(\tau_1,\cdot)\). The zero divisor of the product on \(\C/\Lambda_{\tau_1}\) is the reduced sum of the points of \(E_n\), which equals the divisor of \(\phi_{\nu(n)}(\tau_1,\cdot)\) by \cref{cor:sharp}. The ratio of the two sides is therefore elliptic, holomorphic, and nowhere zero, hence constant in \(z\), and equals \(1\) at \(z=0\), where the product is normalized by \(\theta_{11}(\tau_1,-x)=-\theta_{11}(\tau_1,x)\neq 0\) since no \(x\in E_n\) is 2-torsion for \(n\geq 3\). Since
\[
g_n(\tau_1,\tau_2)=\sum_m\phi_m(\tau_1,0)\,q_2^m,
\]
the value \(\phi_{\nu(n)}(\tau_1,0)\) is the coefficient of \(q_2^{\nu(n)}\) in \cref{eq:P}. Each \(\Phi_M\) is monic of degree \(\psi(M)\) in \(j_2\), except \(\Phi_1=j_1-j_2\), whose leading coefficient in \(j_2\) is \(-1\); so \(\prod_M\Phi_M^{\mu_M}\) has leading coefficient \(\pm 1\) in \(j_2\) and degree \(\widetilde{R}(n)\). With \(\Delta(\tau_2)^{k/12}=q_2^{k/12}\bigl(1+O(q_2)\bigr)\) and \(j(\tau_2)=q_2^{-1}\bigl(1+O(q_2)\bigr)\),
\[
\Delta(\tau_2)^{k/12}\prod_M\Phi_M\bigl(j_1,j(\tau_2)\bigr)^{\mu_M}  =\pm\,q_2^{\,k/12-\widetilde{R}(n)}\bigl(1+O(q_2)\bigr)  =\pm\,q_2^{\,\nu(n)}\bigl(1+O(q_2)\bigr),
\]
the leading coefficient being independent of \(j_1\). Hence the coefficient of \(q_2^{\nu(n)}\) in \cref{eq:P} is a nonzero constant multiple of \(\Delta(\tau_1)^{k/12}\).
\end{proof}

The boundary data of \(G_{n^2}\) are thus determined completely: the weight \(k\) by \cref{thm:weight-identity}, the restriction \(g_n\) by \cref{cor:boundary-product}, the Fourier-Jacobi order \(e_n=\nu(n)\) by \cref{thm:main}, and the leading coefficient \(\phi_{\nu(n)}\) by \cref{prop:theta}. The reconstruction problem is to recover \(G_{n^2}\), hence \(F_n=c\,G_{n^2}/\chi_{10}^{\nu(n)}\) by \cref{thm:main}, from these data.


\begin{rem}\label{rem:ambiguity}
The boundary data alone do not determine \(G_{n^2}\) inside \(M_k\): for every \(W\in M_{k-10(\nu(n)+1)}\) the form \(G_{n^2}+\chi_{10}^{\nu(n)+1}W\) has the same four data, since \(\chi_{10}^{\nu(n)+1}W\) has weight \(k\), lies in \(\ker\rho\), and has \(q_2\)-order at least \(\nu(n)+1\).
In Igusa coordinates the ambiguity is \(F_n\mapsto F_n+J_{10}W\), the tautological one: the boundary data prescribe the behaviour of \(F_n\) along \(H_1\) and no more. Any characterization of \(G_{n^2}\) must therefore include an interior condition.
\end{rem}

\begin{lem}\label{lem:principal}
Let \(F\) be a Siegel modular form of even weight \(k\) and level one with \(F\bigl(\begin{smallmatrix}\tau_1&0\\0&\tau_2\end{smallmatrix}\bigr)\equiv 0\). Then \(F=\chi_{10}F'\) with \(F'\) a Siegel modular form of weight \(k-10\). Equivalently, the kernel of the restriction homomorphism \(\rho\colon M^{\mathrm{even}}\to\cO(\bH\times\bH)\), \(\rho(F)(\tau_1,\tau_2)=F\bigl(\begin{smallmatrix}\tau_1&0\\0&\tau_2\end{smallmatrix}\bigr)\), is the principal ideal \((\chi_{10})\).
\end{lem}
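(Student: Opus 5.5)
The plan is to divide by \(\chi_{10}\) as meromorphic functions and show that the quotient is holomorphic, using that \(\Div(\chi_{10})=H_1\) is reduced and that the preimage of \(H_1\) in \(\bH_2\) is the \(\Sp_4(\Z)\)-orbit of the smooth divisor \(\{z=0\}\).

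\emph{Step 1: vanishing on all of \(H_1\).} Suppose \(F\) vanishes on \(\{z=0\}\). For any \(\gamma\in\Sp_4(\Z)\), the automorphy law \(F(\gamma\tau)=\det(C\tau+D)^kF(\tau)\) shows that \(F\) vanishes on \(\gamma\{z=0\}\). By \cref{sec-2}, \(\Sp_4(\Z)\) acts transitively on the primitive quintuples of discriminant \(1\), so these translates are exactly the components \(D_w\) of the preimage of \(H_1\). Hence \(F\) vanishes on the whole preimage of \(H_1\).

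\emph{Step 2: divisibility.} Each \(D_w\) is smooth, as shown in the proof of \cref{lem:cycle}. Also \(\Div(\chi_{10})=H_1\), so \(\chi_{10}\) vanishes to order exactly one along each \(D_w\). In the local ring at any point of \(\bH_2\), the function \(\chi_{10}\) is therefore a product of the reduced local equations of the branches \(D_w\) through that point. These are pairwise non-associate primes, and \(F\) lies in each of the corresponding prime ideals. Since the local ring is a UFD, \(F\) is divisible by their product, which is \(\chi_{10}\) up to a unit; this is the same argument as in \cref{lem:cycle}. Consequently \(F'=F/\chi_{10}\) is holomorphic on \(\bH_2\). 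It transforms with weight \(k-10\), being a quotient of forms of weights \(k\) and \(10\). By the Koecher principle it is a Siegel modular form, with no condition at the cusp needed in degree two. So \(F'\in M_{k-10}\).

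\emph{Step 3: the ideal statement.} The inclusion \((\chi_{10})\subset\ker\rho\) holds because \(\chi_{10}\) vanishes on \(z=0\). For the reverse inclusion, note that \(\rho\) respects weights in the sense that the restriction of a weight-\(k\) form has weight \(k\) in each variable. Hence the kernel is a graded ideal, and it is enough to treat homogeneous \(F\). Steps 1 and 2 handle these; here \(F'\) has even weight and so lies in \(M^{\mathrm{even}}\).

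\emph{Main obstacle.} The delicate point is Step 2 at points where several translates \(D_w\) meet, for example the images of \(\tau_1=\tau_2=i\) with extra symmetry. There one must know that \(\chi_{10}\) is locally exactly the product of the branch equations, not merely a function vanishing on them. This follows from the fact that \(\Div(\chi_{10})\) on \(\bH_2\) is the reduced preimage of \(H_1\). I would justify that by recalling that \(\chi_{10}\) is, up to a constant, the product of the squares of the ten even theta constants. Each theta constant vanishes simply along its own orbit component, so \(\chi_{10}\) vanishes to order \(2\) on each \(D_w\) in \(\bH_2\) and order \(1\) on the orbifold \(\cA_2\), consistent with \(\Div(\chi_{10})=H_1\). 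Alternatively, the computation avoids multiplicities altogether. Locally near \(z=0\), after a \(\gamma\)-translate, write \(F=z^{a}F_1\) and \(\chi_{10}=z^2u\) with \(u\) a unit. Oddness under \(\diag(1,-1,1,-1)\) is excluded since the weight is even, so \(a\geq2\), and the quotient \(F/\chi_{10}\) is holomorphic off the codimension-two locus where branches meet. Hartogs' theorem then extends it across that locus. I expect to use this Hartogs-plus-parity version, as it is the cleanest.
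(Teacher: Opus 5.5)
Your final argument (parity, then extension and Koecher) is correct, but drop the first version of Step 2 and the claim in your last paragraph that \(\Div(\chi_{10})\) on \(\bH_2\) is the reduced preimage of \(H_1\). On \(\bH_2\) the form \(\chi_{10}\) vanishes to order \(2\) along \(\{z=0\}\), because \(\diag(1,-1,1,-1)\) fixes that divisor pointwise. The UFD argument therefore only makes \(F\) divisible by the product of the reduced branch equations, and it is the square of that product, not the product itself, that equals \(\chi_{10}\) up to a unit. That version never uses that \(k\) is even, so it would equally ``prove'' \(\chi_{10}\mid\chi_{35}\), which \cref{rem:odd} rules out. The parity step is therefore essential, not merely cleaner. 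The order-two fact is available from \cref{sec-2}: \(\phi_1(\chi_{10})\) is a nonzero multiple of \(\Delta\,\wphi_{-2,1}\), whose divisor in \(z\) is \(2\cdot(0)\), so the \(z^2\)-Taylor coefficient of \(\chi_{10}\) is not identically zero.

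Your ``main obstacle'' also does not occur. Take a primitive \(w=(a,b,c,d,e)\neq\pm(0,1,0,0,0)\) of discriminant \(1\); then \(b\) is odd. On \(\{z=0\}\) its relation is \(\tau_2(d\tau_1-c)=a\tau_1+e\), which has no solution in \(\bH\times\bH\). If \(c=d=0\) it forces \(a=e=0\). Otherwise the real matrix \(\bigl(\begin{smallmatrix}a&e\\ d&-c\end{smallmatrix}\bigr)\) has determinant \((1-b^2)/4\leq 0\), so it does not map \(\bH\) into \(\bH\). Distinct translates of \(\{z=0\}\) are thus disjoint, \(\chi_{10}/z^2\) is a unit near every point of \(\{z=0\}\), and no Hartogs step is needed. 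For the ideal statement, add the one-line reason that \(\ker\rho\) is graded: restrictions of forms of distinct weights have distinct weights in \(\tau_1\), hence are linearly independent.

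The paper argues algebraically instead, through Igusa's structure theorem. It uses the Siegel operator and the first Fourier--Jacobi coefficient to get \(\rho(\psi_4)=E_4E_4'\), \(\rho(\psi_6)=E_6E_6'\), and \(\rho(\chi_{12})\) a nonzero multiple of \(\Delta\Delta'\). It then proves \(E_4E_4'\), \(E_6E_6'\), \(\Delta\Delta'\) algebraically independent and reads off \(\chi_{10}\mid P\) from \(P(x,y,0,w)=0\).

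That route handles inhomogeneous elements without a grading argument. As a byproduct it gives the explicit restriction of every monomial of \(\cR\), which is what step (1) of \cref{prop:scheme} needs to set up its linear system. Your route uses no generators and is purely local, and it pinpoints exactly where even weight enters. It would carry over to settings without an explicit structure theorem, at the cost of the analytic inputs (the multiplicity-two computation and the Koecher principle).
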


\begin{proof}
The map \(\rho\) is a ring homomorphism, and \(\rho(\chi_{10})=0\) since \(\Div(\chi_{10})=H_1\) and \(H_1\) is the image of \(\{z=0\}\) (\cref{sec-2}). We compute \(\rho\) on the remaining generators of \cref{sec-2}.

For fixed \(\tau_2\), the function \(\tau_1\mapsto\rho(\psi_4)\) is a holomorphic modular form of weight \(4\) and level one, so \(\rho(\psi_4)=c(\tau_2)E_4(\tau_1)\), and \(c\) is a modular form of weight \(4\), so \(\rho(\psi_4)=c\,E_4(\tau_1)E_4(\tau_2)\) with \(c\in\C\); letting \(\tau_2\to i\infty\) and using that the Siegel operator sends \(\psi_4\) to \(E_4\) (\cref{sec-2}) gives \(c=1\). The same argument gives \(\rho(\psi_6)=E_6(\tau_1)E_6(\tau_2)\).

For \(\chi_{12}\), the same argument gives
\(
\rho(\chi_{12})=A_1(\tau_2)E_4(\tau_1)^3+A_2(\tau_2)\Delta(\tau_1)
\)
with \(A_1,A_2\in\cE_{12}\). Letting \(\tau_2\to i\infty\) and using that the Siegel operator annihilates \(\chi_{12}\) (\cref{sec-2}) gives
\(
A_1(i\infty)E_4^3+A_2(i\infty)\Delta=0,
\)
hence \(A_1(i\infty)=A_2(i\infty)=0\), hence \(A_1=a\Delta(\tau_2)\) and \(A_2=b\Delta(\tau_2)\) with \(a,b\in\C\); the symmetry of \(\rho(\chi_{12})\) in \((\tau_1,\tau_2)\) forces \(a=0\), so \(\rho(\chi_{12})=b\,\Delta(\tau_1)\Delta(\tau_2)\). The coefficient of \(q_2\) in \(\rho(\chi_{12})\) is \(c_{12}\phi_{12,1}(\tau_1,0)\) with \(c_{12}\neq 0\) (\cref{sec-2}), and \(\phi_{12,1}\) is a nonzero constant multiple of \(\Delta\wphi_{0,1}\) \cite[Thm.~9.3]{EichlerZagier}; the restriction \(\wphi_{0,1}(\tau_1,0)\) is a holomorphic modular form of weight \(0\), hence the constant \(12\) in the normalization of \cite{EichlerZagier}. Hence \(b\neq 0\).

The three forms \(E_4E_4'\), \(E_6E_6'\), \(\Delta\Delta'\) are algebraically independent, where the prime denotes evaluation at \(\tau_2\). Indeed, in a polynomial relation
\[
\sum_{j\geq 0}P_j(E_4E_4',E_6E_6')\,(\Delta\Delta')^j=0,
\]
the coefficient of \(q_1^0\) is \(P_0\bigl(E_4(\tau_2),E_6(\tau_2)\bigr)=0\), so \(P_0=0\) by the algebraic independence of \(E_4\) and \(E_6\); dividing by \(\Delta\Delta'\neq 0\) and iterating kills every \(P_j\).

Now write \(F=P(\psi_4,\psi_6,\chi_{10},\chi_{12})\), using \(M^{\mathrm{even}}=\C[\psi_4,\psi_6,\chi_{10},\chi_{12}]\) (\cref{sec-2}). The hypothesis reads \(P(E_4E_4',E_6E_6',0,\,b\,\Delta\Delta')=0\), so \(P(x,y,0,w)=0\) as a polynomial by the independence just proved. In the polynomial ring this is exactly the divisibility \(\chi_{10}\mid P\), and \(F'=F/\chi_{10}\) has weight \(k-10\).
\end{proof}

\begin{rem}\label{rem:odd}
The parity hypothesis in \cref{lem:principal} is necessary: every Siegel modular form of odd weight vanishes identically on \(\{z=0\}\), as in \cref{sec-2}; in particular \(\chi_{35}\) does. But \(\chi_{10}\nmid\chi_{35}\): since the full ring is generated over \(M^{\mathrm{even}}\) by \(\chi_{35}\) (\cref{sec-2}) and \(\chi_{35}^2\) is even, every form of odd weight is \(\chi_{35}\) times a form of even weight, so the smallest odd weight is \(35\), while \(\chi_{35}/\chi_{10}\) would have odd weight \(25\). Hence in the full ring the ideal of forms vanishing on \(H_1\) is \((\chi_{10},\chi_{35})\), and it is principal exactly on the even part. Since \(12\mid k(H_{n^2})\) by \cref{thm:weight-identity}, the even case is the one relevant to this paper.
\end{rem}

\begin{cor}\label{cor:taylor}
Set \(\cR=\C[\psi_4,\psi_6,\chi_{12}]\). Every Siegel modular form \(F\) of even weight \(k\) and level one admits a unique expansion
\[
F\;=\;\sum_{e=0}^{\lfloor k/10\rfloor}\chi_{10}^{\,e}\,F_e,\qquad F_e\in \cR\ \text{of weight }k-10e,
\]
and the restriction \(\rho\) is injective on \(\cR\). Hence \(F\) is determined by the symmetric bi-modular forms \(\rho(F_0),\rho(F_1),\dots\), and \(F\) vanishes to order at least \(e_0\) along \(H_1\) if and only if \(F_e=0\) for \(e<e_0\).
\end{cor}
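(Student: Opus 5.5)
The plan is to derive everything from Igusa's structure theorem together with \cref{lem:principal}. First I will establish existence. Since \(M^{\mathrm{even}}=\C[\psi_4,\psi_6,\chi_{10},\chi_{12}]\) is a free polynomial algebra, any \(F\) of even weight \(k\) is a polynomial \(P(\psi_4,\psi_6,\chi_{10},\chi_{12})\). Grouping its monomials by the exponent \(e\) of \(\chi_{10}\) gives \(F=\sum_e\chi_{10}^eF_e\) with \(F_e\in\cR\). Each \(F_e\) is weighted-homogeneous of weight \(k-10e\), and since the weights are nonnegative, \(e\le\lfloor k/10\rfloor\).

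For uniqueness I will argue inside the polynomial ring. The elements \(\psi_4,\psi_6,\chi_{10},\chi_{12}\) are algebraically independent, so \(M^{\mathrm{even}}=\cR[\chi_{10}]\) is a polynomial ring in \(\chi_{10}\) over \(\cR\). Its coefficients are therefore unique.

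Next I will prove that \(\rho\) is injective on \(\cR\). Two routes are available. The first is to use the computation in the proof of \cref{lem:principal}: there \(\rho(\psi_4)=E_4E_4'\), \(\rho(\psi_6)=E_6E_6'\), and \(\rho(\chi_{12})=b\,\Delta\Delta'\) with \(b\neq0\), and these three restrictions are algebraically independent, so \(\rho\) maps \(\cR\) isomorphically onto \(\C[E_4E_4',E_6E_6',\Delta\Delta']\). The second route is purely formal. If \(R\in\cR\) satisfies \(\rho(R)=0\), then \cref{lem:principal} gives \(\chi_{10}\mid R\) in \(M^{\mathrm{even}}\). But \(R\) has degree \(0\) in \(\chi_{10}\) in the polynomial ring, so \(R=0\). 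I would present the second route, since it uses only the statement of the lemma.

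For the determination claim, the \(F_e\) recover \(F\), and by injectivity \(F_e\) is recovered from \(\rho(F_e)\). The \(\rho(F_e)\) are symmetric because \(\rho\) of any level-one form is symmetric under the swap, as noted in \cref{sec-5}.

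Finally I will prove the vanishing-order criterion. Here "vanishing order along \(H_1\)" means \(\ord_{H_1}F\), which is the power of \(\chi_{10}\) dividing \(F\) since \(\Div\chi_{10}=H_1\) is reduced. If \(F_e=0\) for \(e<e_0\), then \(\chi_{10}^{e_0}\mid F\) in \(M^{\mathrm{even}}\), so \(F\) vanishes to order at least \(e_0\). For the converse, suppose \(\ord_{H_1}F\ge e_0\). I will show by induction that \(F_0=\dots=F_{e_0-1}=0\). If \(F_0,\dots,F_{e-1}=0\) with \(e<e_0\), then \(F/\chi_{10}^e\) is a holomorphic Siegel modular form of even weight: its divisor \(\Div F-eH_1\) is effective, hence it is holomorphic on \(\bH_2\), and \(\Sp_4(\Z)\)-invariant of the right weight, and at level one Koecher's principle gives holomorphy at the boundary. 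This form still vanishes on \(H_1\), since \(e<e_0\). Its expansion has \(F_e\) as its \(\chi_{10}^0\)-term, and applying \(\rho\) gives \(\rho(F_e)=0\), hence \(F_e=0\) by injectivity.

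The only delicate point I expect is this identification of analytic vanishing order along \(H_1\) with \(\chi_{10}\)-divisibility in the ring. It reduces to holomorphy of \(F/\chi_{10}^e\), which follows from \(\Div\chi_{10}=H_1\) with multiplicity one; everything else is bookkeeping in the polynomial ring.
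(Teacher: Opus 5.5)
Your proposal is correct and follows the paper's proof. Existence and uniqueness come from \(M^{\mathrm{even}}=\cR[\chi_{10}]\), and injectivity comes from \(\ker\rho=(\chi_{10})\) (\cref{lem:principal}) together with \(\cR\cap(\chi_{10})=0\), which is exactly your second route. Your induction for the vanishing-order criterion just spells out the paper's one-line remark that \(\rho(F_e)\) is the order-\(e\) coefficient along \(H_1\). The Koecher step there is superfluous: once \(F_0=\dots=F_{e-1}=0\), the quotient \(F/\chi_{10}^{e}=\sum_{e'\geq e}\chi_{10}^{e'-e}F_{e'}\) already lies in \(M^{\mathrm{even}}\).
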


\begin{proof}
The expansion exists and is unique because \(M^{\mathrm{even}}=\cR[\chi_{10}]\) is a polynomial ring over \(\cR\) (\cref{sec-2}). By \cref{lem:principal}, \(\ker\rho=(\chi_{10})\), and \(\cR\cap(\chi_{10})=0\) in a polynomial ring, so \(\rho\) is injective on \(\cR\). The last statement holds because \(\rho(F_e)\) is the coefficient of the order-\(e\) term of \(F\) along \(H_1\) in the parameter \(\chi_{10}\).
\end{proof}

\begin{lem}\label{lem:descent}
Let \(n\geq 2\), let \(F\) be a Siegel modular form of even weight with \(\chi_{10}\mid F\), and set \(F'=F/\chi_{10}\). Let \(L\geq 1\) and suppose that for every \(m\leq L\) the Fourier-Jacobi coefficient \(\phi_m(F)\) vanishes at every point of exact order \(n\). Then for every \(m\leq L-1\) the coefficient \(\phi_m(F')\) vanishes at every point of exact order \(n\).
\end{lem}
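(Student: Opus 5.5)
The plan is to compare Fourier--Jacobi expansions on the two sides of \(F=\chi_{10}F'\) and then induct on the index. Write \(\phi_j'=\phi_j(F')\). By \cref{sec-2}, the expansion of \(\chi_{10}\) begins \(\chi_{10}=c_{10}\,\phi_{10,1}\,q_2+\sum_{i\geq 2}\psi_i\,q_2^i\) with \(c_{10}\neq 0\), where \(\psi_i\) denotes the \(i\)-th Fourier--Jacobi coefficient of \(\chi_{10}\). For \(\operatorname{Im}\tau_2\) large both expansions converge absolutely, so we may multiply them and compare coefficients of \(q_2^m\). This gives, for every \(m\geq 0\), the identity of holomorphic functions on \(\bH\times\C\)
\[
\phi_m(F)\;=\;c_{10}\,\phi_{10,1}\,\phi_{m-1}'\;+\;\sum_{i=2}^{m}\psi_i\,\phi_{m-i}',
\]
with the convention \(\phi_{-1}'=0\).

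The key input is that \(\phi_{10,1}\) does not vanish at points of exact order \(n\). By \cref{lem:algindep} and its proof, \(\phi_{10,1}\) is a nonzero constant multiple of \(\Delta\,\wphi_{-2,1}\). The form \(\Delta(\tau_1)\) has no zeros on \(\bH\). By the remark preceding \cref{lem:2torsion-parity}, the divisor of \(z\mapsto\wphi_{-2,1}(\tau_1,z)\) on \(\C/\Lambda_{\tau_1}\) is \(2\cdot(0)\). A point \(x=(a\tau_1+e)/n\) of exact order \(n\geq 2\) is nonzero modulo \(\Lambda_{\tau_1}\), so \(\phi_{10,1}(\tau_1,x)\neq 0\) for every \(\tau_1\in\bH\).

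Now fix such a point \(x=x(\tau_1)=(a\tau_1+e)/n\) with \(\gcd(a,e,n)=1\). We prove by induction on \(j=0,1,\dots,L-1\) that \(\phi_j'(\tau_1,x)=0\) for all \(\tau_1\). Evaluate the identity above with \(m=j+1\leq L\) at \(z=x\). The left side vanishes by hypothesis. Each term \(\psi_i\,\phi_{j+1-i}'\) with \(i\geq 2\) involves an index \(j+1-i<j\), so it vanishes by the induction hypothesis; for \(j=0\) these terms are absent. What remains is \(c_{10}\,\phi_{10,1}(\tau_1,x)\,\phi_j'(\tau_1,x)=0\), and the nonvanishing of \(\phi_{10,1}(\tau_1,x)\) gives \(\phi_j'(\tau_1,x)=0\). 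This covers every \(m\leq L-1\), as claimed.

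The only non-formal step is the nonvanishing of \(\phi_{10,1}\) at nonzero torsion points, and the divisor computation for \(\wphi_{-2,1}\) already recorded in \cref{sec-2} handles it. The rest is bookkeeping in the product expansion, together with the fact that \(\ord_{q_2}(\chi_{10})=1\).
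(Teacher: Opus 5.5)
Your proposal is correct and follows the paper's proof essentially step for step. Both use the product formula \(\phi_m(F)=\sum_{i=1}^{m}\phi_i(\chi_{10})\,\phi_{m-i}(F')\), which comes from \(\ord_{q_2}(\chi_{10})=1\), together with the nonvanishing of \(\phi_1(\chi_{10})=c_{10}\,\phi_{10,1}\) off \(\Lambda_{\tau_1}\) via the divisor \(2\cdot(0)\) of \(\wphi_{-2,1}\), and then an induction on the index. One cosmetic point: naming the Fourier--Jacobi coefficients of \(\chi_{10}\) as \(\psi_i\) clashes with the Eisenstein series \(\psi_4,\psi_6\).
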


\begin{proof}
Since \(\ord_{q_2}(\chi_{10})=1\), the product expansion gives
\[
\phi_m(F)=\sum_{i=1}^{m}\phi_i(\chi_{10})\,\phi_{m-i}(F')
\]
for every \(m\geq 1\), an identity of functions of \((\tau_1,z)\). The leading coefficient is \(\phi_1(\chi_{10})=c_{10}\phi_{10,1}\) with \(c_{10}\neq 0\) (\cref{sec-2}), and \(\phi_{10,1}\) is a nonzero constant multiple of \(\Delta\wphi_{-2,1}\) \cite[Thm.~9.3]{EichlerZagier}, whose divisor in \(z\) is \(2\cdot(0)\) by \cref{sec-2}; hence \(\phi_1(\chi_{10})(\tau_1,x)\neq 0\) for every \(\tau_1\) and every \(x\notin\Lambda_{\tau_1}\).

We induct on \(m\). For \(m=1\), evaluating \(\phi_1(F)=\phi_1(\chi_{10})\phi_0(F')\) at a point \(x\) of exact order \(n\) gives \(\phi_0(F')(\tau_1,x)=0\). For \(2\leq m\leq L\), evaluating at \(x\) and using the inductive hypothesis \(\phi_{m-i}(F')(\tau_1,x)=0\) for \(2\leq i\leq m\) leaves
\[
\phi_1(\chi_{10})(\tau_1,x)\,\phi_{m-1}(F')(\tau_1,x)=0,
\]
hence \(\phi_{m-1}(F')(\tau_1,x)=0\).
\end{proof}

\Cref{lem:principal} and \cref{cor:taylor} replace the boundary-reconstruction heuristic by an unconditional statement: an even Siegel modular form of genus two is determined by its Taylor expansion along \(H_1\), the Taylor coefficients lie in \(\cR\), and each is faithfully recorded by its restriction to \(H_1\). The uniqueness statement with content is \cref{prop:linear}, which characterizes the scalar multiples of \(G_{n^2}\) inside \(M_k\) by the torsion conditions across all Fourier-Jacobi levels; \cref{lem:descent} is the mechanism by which those conditions propagate through the Taylor expansion, consuming one level per division by \(\chi_{10}\).

One interior condition suffices, and it is linear.

\begin{prop}
\label{prop:linear}
Let \(n\geq 2\) and let \(a,e\in\Z\) with \(\gcd(a,e,n)=1\). A form \(G\in M_{k}\) vanishes identically on \(\{nz=a\tau_1+e\}\subset\bH_2\) if and only if \(G\) is a scalar multiple of \(G_{n^2}\).
\end{prop}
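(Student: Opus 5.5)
The plan is to prove the two directions separately; the reverse direction is the substantive one.

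The ``if'' direction is already \cref{prop:torsion-vanishing}. The quintuple \(w_0=(a,-n,0,0,e)\) is primitive of discriminant \(n^2\). Its divisor \(D_{w_0}=\{nz=a\tau_1+e\}\) lies in the preimage of \(H_{n^2}\), so \(G_{n^2}\), and hence every scalar multiple of it, vanishes identically there.

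For the ``only if'' direction, let \(G\in M_k\) vanish on \(D_{w_0}\). If \(G=0\) there is nothing to prove, so assume \(G\neq 0\).

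The first step is to spread the vanishing to every divisor \(D_w\) with \(w\) primitive of discriminant \(n^2\). By \cref{sec-2}, \(\Sp_4(\Z)\) acts transitively on these quintuples, and the action on quintuples is compatible with the action on \(\bH_2\): if \(\gamma\in\Sp_4(\Z)\) sends \(w_0\) to \(w\), then \(\gamma\) maps \(D_{w_0}\) onto \(D_w\). The modular transformation law gives \(G(\gamma\tau)=\det(C\tau+D)^k G(\tau)\) with a nowhere-vanishing automorphy factor. Hence \(G\) vanishes on every \(D_w\), that is, on the whole preimage of \(H_{n^2}\) in \(\bH_2\).

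The second step is divisibility. I would show that \(G/G_{n^2}\) is holomorphic on \(\bH_2\) by working locally upstairs.
\begin{itemize}
\item At a point \(\omega\) of a single \(D_w\) that is generic, meaning it lies on no other \(D_{w'}\) and has stabilizer \(\{\pm1\}\), the quotient map to \(\cA_2\) is a local isomorphism. The condition \(\Div(G_{n^2})=H_{n^2}\), with multiplicity one, then says that \(G_{n^2}\) vanishes to order exactly \(1\) along \(D_w\) near \(\omega\).
\item \(D_w\) is smooth (shown in the proof of \cref{lem:cycle}), so its local equation \(h_w\) is a prime in the regular local ring at \(\omega\). Since \(G\) vanishes on \(D_w\), \(h_w\) divides \(G\) there, and \(G_{n^2}=h_w\cdot(\text{unit})\). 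So \(G/G_{n^2}\) is holomorphic near \(\omega\).
\item Away from \(\bigcup_w D_w\), the form \(G_{n^2}\) has no zeros, so the quotient is holomorphic there as well.
\end{itemize}
The excluded set is the set of nongeneric points of the \(D_w\). This is where I expect the main care to be needed. The intersections \(D_w\cap D_{w'}\) for non-associate \(w,w'\) are analytic of codimension \(2\). Each fixed locus of a nontrivial stabilizer element is either of codimension \(\geq 2\) or a divisor, namely a reflection locus such as a Humbert surface of discriminant \(1\) or \(4\). On such a reflection divisor the local chart of \(\cA_2\) is a quotient, but \(D_w\) meets a reflection divisor different from itself only in codimension \(2\). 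For \(n=2\), \(D_w\) is itself a reflection locus, so the multiplicity-one claim has to be read on the orbifold, where \(\Div\) is taken in the orbifold sense. In that case I would instead use the local equation of \(D_w\) directly, relying on the evenness of \(k\).

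With these points handled, \(G/G_{n^2}\) is holomorphic off an analytic subset of codimension \(\geq 2\) in \(\bH_2\). It therefore extends holomorphically by Hartogs' theorem.

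The third step concludes. The quotient \(G/G_{n^2}\) is a holomorphic Siegel modular form of weight \(k-k=0\) and level one. By the Koecher principle it is bounded at the cusps, and a holomorphic weight-zero form is constant, since \(M_0=\C\). Hence \(G=c\,G_{n^2}\).

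As an alternative to the orbifold bookkeeping, one can use \(\Div(G)\). It is an effective divisor of class \(k\lambda=[H_{n^2}]\) in \(\Pic_\Q(\cA_2)=\Q\lambda\) that contains \(H_{n^2}\). Then \(\Div(G)-H_{n^2}\) is effective of class \(0\). A nonzero effective divisor on \(\cA_2\) has positive class, because \(\lambda\) is ample on the Satake compactification. So \(\Div(G)=H_{n^2}\), and the uniqueness in \cref{cor:exact-form} gives the claim.
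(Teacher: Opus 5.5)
Your proposal is correct and follows essentially the paper's argument: transitivity of \(\Sp_4(\Z)\) on primitive quintuples of discriminant \(n^2\) spreads the vanishing from \(D_{w_0}\) to the whole preimage of \(H_{n^2}\), so \(\Div(G)\geq H_{n^2}\) and \(G/G_{n^2}\) is a holomorphic modular function of weight zero, hence constant. You are more explicit than the paper about the local divisibility upstairs, including the case \(n=2\), where \(D_w\) is a reflection locus and the evenness of \(k\) supplies the needed order of vanishing; you also conclude via the Koecher principle and \(M_0=\C\) rather than via normality of \(\cA_2^\ast\) and its codimension-two boundary, which amounts to the same thing.
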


\begin{proof}
The quintuple \(w=(a,-n,0,0,e)\) is primitive of discriminant \(n^2\) and cuts out the divisor \(D_w=\{nz=a\tau_1+e\}\), on which \(G_{n^2}\) vanishes by \cref{sec-2}. 
The assertion is immediate for \(G=0\), so suppose that
\(G\in M_k\) is nonzero and vanishes on \(D_w\).
The group \(\Sp_4(\Z)\) acts transitively on the primitive quintuples of discriminant \(n^2\) \cite{Humbert1899}, see also \cite[Ch.~IX]{HvdG}, and modularity of \(G\) transports the  vanishing along \(D_w\) to every divisor in the preimage of  \(H_{n^2}\). 
Hence
\(
\Div(G)\geq H_{n^2}
\).

It follows that \(G/G_{n^2}\) is a regular modular function of weight zero on \(\cA_2\). As the quotient of two modular forms of the
same weight, it is a rational function on the Baily--Borel compactification \(\cA_2^\ast\). The boundary \(\cA_2^\ast\setminus\cA_2\) has codimension two, and \(\cA_2^\ast\) is normal, so \(G/G_{n^2}\) extends to a regular function on \(\cA_2^\ast\). Since \(\cA_2^\ast\) is projective, this function is constant. Therefore \(G=cG_{n^2}\) for some
\(c\in\C^\times\).
\end{proof}

For \(n\geq 2\) and \(k=k(H_{n^2})\), let \(m_0(n,k)\) be the least integer with the following property: a form \(G\in M_k\) whose Fourier-Jacobi coefficients \(\phi_m\) vanish at every point of exact order \(n\) for all \(m\leq m_0(n,k)\) is a scalar multiple of \(G_{n^2}\). Such an integer exists. The conditions are linear and \(M_k\) is finite dimensional, so the solution spaces of the truncated systems stabilize; a form in the stable space has \(\sum_m\phi_m(\tau_1,(a\tau_1+e)/n)\,q_2^m\equiv 0\), hence vanishes on \(\{nz=a\tau_1+e\}\), and \cref{prop:linear} applies.


We write \(J_{k,m}\) for the space of Jacobi forms of weight \(k\) and index \(m\) for \(\SL_2(\Z)\ltimes\Z^2\), and \(q_1=e^{2\pi i\tau_1}\), \(\zeta=e^{2\pi iz}\).

Fix \(n\geq 2\) and \(k=k(H_{n^2})\). For an integer \(m>\nu(n)\) define
\[
\begin{split}
S_m=\bigl\{\omega\in J_{k,m}\;:\;\ &\ord_{q_1}(\omega)\geq m,\\
&\omega(\tau_1,x)=0\ \text{for every }x\in\C/\Lambda_{\tau_1}\text{ of exact order }n,\\
&\omega(\tau_1,0)=0\bigr\}.
\end{split}
\]
Two forms in \(J_{k,m}\) with the same Fourier coefficients \(c(N,r)\) for \(N<m\), the same restriction to \(z=0\), and vanishing at every point of exact order \(n\) differ by an element of \(S_m\). The determination of \(\phi_m\) from the level-\(m\) data alone is therefore unique if and only if \(S_m=0\).

\begin{prop}\label{prop:tail}
Let \(n\geq 2\), let \(k=k(H_{n^2})\), and let \(m\geq k/10\) be an integer. Then \(S_m=0\).
\end{prop}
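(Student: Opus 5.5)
The plan is to divide by a power of \(\Delta\) and reduce to the structure theorem for weak Jacobi forms \cite[Thm.~9.3]{EichlerZagier}. Let \(\omega\in S_m\). Since \(\ord_{q_1}(\omega)\geq m\), every Fourier coefficient \(c(N,r)\) of \(\omega\) with \(N<m\) vanishes. I would set
\[
\eta:=\omega/\Delta(\tau_1)^m .
\]
The function \(\Delta\) is nowhere zero on \(\bH\) and satisfies \(\Delta=q_1\prod(1-q_1^j)^{24}\). Hence \(\eta\) is holomorphic on \(\bH\times\C\). It satisfies the modular and elliptic laws of weight \(k-12m\) and index \(m\), and its Fourier expansion involves only \(q_1^N\) with \(N\geq 0\). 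So \(\eta\) is a weak Jacobi form of even weight \(k-12m\) and index \(m\); the weight is even because \(k\) is even by \cref{thm:weight-identity}. The condition \(4Nm\geq r^2\) is lost in this division, which is why we land in weak rather than holomorphic forms.

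By \cite[Thm.~9.3]{EichlerZagier}, \(\eta\) is a polynomial in \(E_4,E_6,\wphi_{-2,1},\wphi_{0,1}\). Consider a monomial \(E_4^aE_6^b\wphi_{-2,1}^{\,s}\wphi_{0,1}^{\,t}\) with \(s+t=m\). Its weight is \(4a+6b-2s\geq -2s\geq -2m\). Hence a nonzero weak Jacobi form of index \(m\) has weight at least \(-2m\), and weight exactly \(-2m\) forces \(a=b=t=0\) and \(s=m\). The hypothesis \(m\geq k/10\) gives \(k-12m\leq -2m\), and I would split into two cases.

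If \(m>k/10\), then \(k-12m<-2m\), so \(\eta=0\) and hence \(\omega=0\).

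If \(m=k/10\), then \(\eta=c\,\wphi_{-2,1}^{\,m}\) with \(c\in\C\). By \cref{sec-2}, the divisor of \(z\mapsto\wphi_{-2,1}(\tau_1,z)\) on \(\C/\Lambda_{\tau_1}\) is \(2\cdot(0)\), so \(\wphi_{-2,1}^{\,m}\) is nonzero at every point of exact order \(n\geq 2\). Since \(\omega\in S_m\) vanishes at such a point, \(c=0\) and \(\omega=0\).

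In this argument the conditions \(\omega(\tau_1,0)=0\) and \(m>\nu(n)\) are not needed. The step needing the most care is the passage from \(\omega\) to a weak Jacobi form: one must confirm that weak Jacobi forms in \cite{EichlerZagier} are defined by exactly the condition \(c(N,r)=0\) for \(N<0\), so that \(\ord_{q_1}(\omega)\geq m\) is the right hypothesis. One must also verify the minimal-weight statement directly from the monomial weights rather than quoting it.
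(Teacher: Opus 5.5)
Your proposal is correct and follows the paper's proof of this proposition essentially step by step. You divide by $\Delta^m$ to obtain a weak Jacobi form of weight $k-12m\leq -2m$, use the structure theorem \cite[Thm.~9.3]{EichlerZagier} to get $\omega=0$ when $m>k/10$ and $\omega=c\,\Delta^m\wphi_{-2,1}^{\,m}$ when $m=k/10$, and then use the divisor $2\cdot(0)$ of $\wphi_{-2,1}(\tau_1,\cdot)$ at a point of exact order $n$ to force $c=0$. Your monomial-weight bound is the same computation as the paper's observation that $k-12m+2a<0$ in the direct-sum decomposition.
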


\begin{proof}
Let \(\omega\in S_m\). The form \(\omega/\Delta^m\) is holomorphic on \(\bH\times\C\), has weight \(k-12m\) and index \(m\), and its Fourier expansion has exponents \(N\geq 0\); hence it is a weak Jacobi form of even weight. By the structure theorem \cite[Thm.~9.3]{EichlerZagier},
\[
\omega/\Delta^{m}\;\in\;\bigoplus_{a=0}^{m}\cE_{k-12m+2a}\,\wphi_{-2,1}^{\,a}\,\wphi_{0,1}^{\,m-a}.
\]
If \(m>k/10\), then \(k-12m+2a<0\) for every \(0\leq a\leq m\) and \(\omega=0\). If \(m=k/10\), only the summand \(a=m\) survives and \(\omega=c\,\Delta^m\wphi_{-2,1}^{\,m}\) with \(c\in\C\). The divisor of \(\wphi_{-2,1}(\tau_1,\cdot)\) is \(2\cdot(0)\) by \cref{sec-2}, and points of exact order \(n\) do not lie in \(\Lambda_{\tau_1}\), so the vanishing condition defining \(S_m\) forces \(c=0\).
\end{proof}

\begin{thm}\label{thm:obstruction}
Let \(n\geq 3\) be prime, let \(k=k(H_{n^2})\), and let \(m=\nu(n)+1=(n^2+1)/2\). Let \(\eta(\tau_1)=q_1^{1/24}\prod_{i\geq 1}(1-q_1^i)\), so that \(\Delta=\eta^{24}\), and set
\[
\Xi_n(\tau_1,z)\;=\;\eta(\tau_1)^{18}\,\theta_{11}(\tau_1,z)\,\theta_{11}(\tau_1,nz).
\]
Then \(\Xi_n\) is a Jacobi cusp form of weight \(10\) and index \(m\) with \(\ord_{q_1}(\Xi_n)=1\), and
\[
S_{m} \;=\;\Xi_n\cdot\Delta^{\nu(n)}\cdot \cE_{12\widetilde{R}(n)-10},
\]
and \(\dim S_{m}\;=\;\widetilde{R}(n)-1\;\geq\;5\).
\end{thm}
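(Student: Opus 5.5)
The proof has three parts. First I verify that \(\Xi_n\) is a Jacobi cusp form and compute its divisor in \(z\). Then I divide an arbitrary \(\omega\in S_m\) by \(\Xi_n\) and identify the quotient with an elliptic modular form. Finally the dimension is a count in \(\cE_w\).

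\emph{Step 1: \(\Xi_n\) is a Jacobi cusp form.}
\begin{itemize}
\item \(\theta_{11}(\tau_1,z)\) transforms as a Jacobi form of weight \(\tfrac12\) and index \(\tfrac12\) with the multiplier system of \(\eta^3\).
\item So \(\theta_{11}(\tau_1,nz)\) has weight \(\tfrac12\), index \(n^2/2\), and the same multiplier system.
\item Multiplying by \(\eta^{18}\) gives weight \(\tfrac12+\tfrac12+9=10\), index \((1+n^2)/2=m\), and multiplier \(v_\eta^{3+3+18}=v_\eta^{24}=1\).
\item For the elliptic law, the transformation formula for \(\theta_{11}\) recalled before \cref{prop:theta} gives a sign \((-1)^{\ell+\ell'}\) from the first factor. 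The second factor gives \((-1)^{n(\ell+\ell')}\). Since \(n\) is odd, these signs cancel.
\item The exponential factors combine to \(e^{-2\pi i m(\ell^2\tau_1+2\ell z)}\), which is the elliptic law of index \(m\).
\item The \(q_1\)-expansion starts at \(q_1^{18/24+1/8+1/8}=q_1^{1}\). Hence \(\ord_{q_1}(\Xi_n)=1\), and the cusp condition \(4Nm>r^2\) can be read off the product of the theta series.
\end{itemize}

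\emph{Step 2: the divisor of \(\Xi_n\) in \(z\).} For fixed \(\tau_1\), on \(\C/\Lambda_{\tau_1}\):
\begin{itemize}
\item \(\theta_{11}(\tau_1,z)\) contributes a simple zero at \(0\).
\item \(\theta_{11}(\tau_1,nz)\) contributes a simple zero at every \(n\)-torsion point, including \(0\).
\item Since \(n\) is prime, the nonzero \(n\)-torsion points are exactly the points of exact order \(n\).
\end{itemize}
So the divisor is \(2\cdot(0)+\sum_{x\in E_n}(x)\), of total degree \(2+n^2-1=2m\). This is the maximum allowed by \cite[Thm.~1.2]{EichlerZagier}.

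\emph{Step 3: identifying \(S_m\).} Let \(\omega\in S_m\) be nonzero.
\begin{itemize}
\item The weight \(k\) is even, so \(\omega(\tau_1,\cdot)\) is even. Since it vanishes at \(0\), it vanishes there to order at least \(2\).
\item It also vanishes at every point of \(E_n\). So for every \(\tau_1\) with \(\omega(\tau_1,\cdot)\not\equiv0\), its divisor dominates the divisor of \(\Xi_n(\tau_1,\cdot)\). Both divisors have degree \(2m\), so they are equal.
\item Hence \(\omega/\Xi_n\) is elliptic, holomorphic and zero-free in \(z\), so it is a function \(f(\tau_1)\) alone.
\item \(f\) is holomorphic on \(\bH\): near any \(\tau_1\), choose \(z\) with \(\Xi_n(\tau_1,z)\neq0\).
\item \(f\) is modular of weight \(k-10\) for \(\SL_2(\Z)\), since \(\omega\) and \(\Xi_n\) have the same index.
\item At the cusp, write \(\omega=f\,\Xi_n\) as \(q_1\)-series whose coefficients are Laurent polynomials in \(\zeta^{1/2}\). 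The leading coefficient of \(\Xi_n\) is a nonzero Laurent polynomial, so \(\ord_{q_1}f=\ord_{q_1}\omega-1\geq m-1=\nu(n)\).
\end{itemize}
Therefore \(f=\Delta^{\nu(n)}h\) with \(h\) holomorphic at \(\infty\). Its weight is \(k-10-12\nu(n)=12\widetilde{R}(n)-10\), by \cref{thm:weight-identity}. This proves \(S_m\subseteq\Xi_n\Delta^{\nu(n)}\cE_{12\widetilde R(n)-10}\).

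For the reverse inclusion, take any \(\Xi_n\Delta^{\nu(n)}h\) with \(h\in\cE_{12\widetilde R(n)-10}\).
\begin{itemize}
\item It lies in \(J_{k,m}\).
\item It has \(q_1\)-order at least \(1+\nu(n)=m\).
\item It vanishes at \(0\) and at every point of \(E_n\), by Step 2.
\end{itemize}
So it lies in \(S_m\). The map \(h\mapsto\Xi_n\Delta^{\nu(n)}h\) is injective, which gives the dimension formula below.

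\emph{Step 4: the dimension.} The weight \(w=12\widetilde R(n)-10\) satisfies \(w\equiv 2\pmod{12}\). Hence \(\dim\cE_w=\lfloor w/12\rfloor=\widetilde R(n)-1\).

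It remains to show \(\widetilde R(n)\geq 6\):
\begin{itemize}
\item For \(n=3\), \(\widetilde R(3)=6\) from the table in \cref{sec-4}.
\item For prime \(n\geq5\), the terms \(x=\pm(n-2)\), \(u=1\) give \(M=n-1\). Together they contribute \(2\psi(n-1)\geq 2(n-1)\geq 8\).
\end{itemize}

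\emph{Main obstacle.} The delicate points are the multiplier and sign bookkeeping for the half-integral-index theta factors in Step 1, and the \(q_1\)-order of the quotient in Step 3. For the latter, I would verify that the leading \(q_1\)-coefficient of \(\Xi_n\) is a nonzero Laurent polynomial in \(\zeta^{1/2}\), so that division of formal \(q_1\)-series is legitimate.
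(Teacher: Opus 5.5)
Your proposal is correct and follows the paper's proof essentially step for step. Both arguments use the same divisor \(2\cdot(0)+\sum_{x\in E_n}(x)\) of degree \(2m\), the same quotient \(\omega/\Xi_n\) giving an elliptic modular form of weight \(k-10\) divisible by \(\Delta^{\nu(n)}\), and the same count \(\dim\cE_{12\widetilde R(n)-10}=\widetilde R(n)-1\). The differences are cosmetic: you obtain the modular law from the \(\eta^3\)-multiplier of \(\theta_{11}\) rather than checking \(\tau_1\mapsto\tau_1+1\) and \(\tau_1\mapsto-1/\tau_1\) directly, and you leave as routine the cusp inequality \(4mN-r^2=(ns-t)^2+2(n^2+1)(\tfrac34+j)>0\), which the paper writes out.
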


\begin{proof}
For prime \(n\) one has \(\varphi(n)\psi(n)=n^2-1\), so \(\nu(n)+1=(n^2+1)/2=m\).

The elliptic transformation of \(\theta_{11}\), applied at \(z\) and at \(nz\), produces the sign \((-1)^{(n+1)(\ell+\ell')}=1\), since \(n\) is odd, and the factor \(e^{-2\pi im(\ell^2\tau_1+2\ell z)}\); so \(\Xi_n\) satisfies the index-\(m\) elliptic law.
From \((2i+1)^2\equiv 1\pmod 8\) one has
\[
\theta_{11}(\tau_1+1,z)=e^{\pi i/4}\theta_{11}(\tau_1,z),
\]
and \(\eta(\tau_1+1)=e^{\pi i/12}\eta(\tau_1)\), so the factor of \(\Xi_n\) under \(\tau_1\mapsto\tau_1+1\) is \(e^{3\pi i/2}e^{\pi i/4}e^{\pi i/4}=1\).

The inversion
\[
\theta_{11}(-1/\tau_1,w/\tau_1)=-i(-i\tau_1)^{1/2}e^{\pi iw^2/\tau_1}\theta_{11}(\tau_1,w),
\]
applied at \(w=z\) and \(w=nz\), together with \(\eta(-1/\tau_1)=(-i\tau_1)^{1/2}\eta(\tau_1)\), gives
\[
\Xi_n(-1/\tau_1,z/\tau_1)=\tau_1^{10}e^{2\pi imz^2/\tau_1}\Xi_n(\tau_1,z).
\]
Since the two transformations generate \(\SL_2(\Z)\), the form \(\Xi_n\) transforms as a Jacobi form of weight \(10\) and index \(m\) with trivial character.

The Fourier exponents of \(\Xi_n\) are \(N=(s^2+t^2)/2+3/4+j\) and \(r=s+nt\) with \(s,t\in\Z+\tfrac12\) and \(j\geq 0\), and
\[
4mN-r^2\;=\;(ns-t)^2+2(n^2+1)\bigl(\tfrac34+j\bigr)\;\geq\;\tfrac32\,(n^2+1)\;>\;0,
\]
so \(\Xi_n\) is a Jacobi cusp form. Its \(q_1\)-order is \(3/4+1/8+1/8=1\), and the coefficient of \(q_1\) is the nonzero Laurent polynomial
\[
\zeta^{(n+1)/2}-\zeta^{(n-1)/2}-\zeta^{-(n-1)/2}+\zeta^{-(n+1)/2}.
\]

For fixed \(\tau_1\) the zeros of \(\theta_{11}(\tau_1,\cdot)\) are the points of \(\Lambda_{\tau_1}\), all simple; hence the divisor of \(z\mapsto\Xi_n(\tau_1,z)\) on \(\C/\Lambda_{\tau_1}\) is \(2\cdot(0)+\sum_x(x)\), the sum running over the \(n^2-1\) nonzero \(n\)-torsion points, each simple. Since \(n\) is prime, every nonzero \(n\)-torsion point has exact order \(n\).

Let \(\omega\in S_m\) with \(\omega\neq 0\), and fix \(\tau_1\) with \(\omega(\tau_1,\cdot)\not\equiv 0\). The weight \(k\) is even, so \(\omega(\tau_1,\cdot)\) is even in \(z\), and the conditions defining \(S_m\) give a zero of order at least \(2\) at \(z=0\) and zeros at the \(n^2-1\) points of exact order \(n\); that is at least \(n^2+1=2m\) zeros, and by \cite[Thm.~1.2]{EichlerZagier} exactly \(2m\).
Hence the divisor of \(\omega(\tau_1,\cdot)\) equals the divisor of \(\Xi_n(\tau_1,\cdot)\). The quotient \(F=\omega/\Xi_n\) is, for each such \(\tau_1\), holomorphic in \(z\) and invariant under \(\Lambda_{\tau_1}\), since both forms satisfy the same index-\(m\) elliptic law; a holomorphic elliptic function is constant, so \(F\) depends on \(\tau_1\) alone. Since \(\Xi_n(\tau_1,\cdot)\not\equiv 0\) for every \(\tau_1\), the function \(F\) is holomorphic on \(\bH\), and it transforms with weight \(k-10\) and trivial character, because \(\omega\) and \(\Xi_n\) share the automorphy factor \(e^{2\pi imcz^2/(c\tau_1+d)}\). Orders add in \(q_1\) because the leading coefficient of \(\Xi_n\) is a nonzero Laurent polynomial, so \(\ord_{q_1}(F)=\ord_{q_1}(\omega)-1\geq m-1=\nu(n)\geq 0\). Hence \(F\in\cE_{k-10}\) and \(\Delta^{\nu(n)}\) divides \(F\). By \cref{thm:weight-identity}, \(k-10-12\nu(n)=12\widetilde{R}(n)-10\), so \(\omega\in\Xi_n\Delta^{\nu(n)}\cE_{12\widetilde{R}(n)-10}\).

Conversely, let \(F\in\cE_{12\widetilde{R}(n)-10}\) and set \(\omega=\Xi_n\Delta^{\nu(n)}F\). Then \(\omega\in J_{k,m}\), and \(\ord_{q_1}(\omega)=1+\nu(n)+\ord_{q_1}(F)\geq m\); moreover \(\theta_{11}(\tau_1,nx)=0\) for every \(n\)-torsion point \(x\), so \(\omega\) vanishes at every point of exact order \(n\) and at \(z=0\). Hence \(\omega\in S_m\). The map \(F\mapsto\Xi_n\Delta^{\nu(n)}F\) is injective, so \(S_m=\Xi_n\Delta^{\nu(n)}\cE_{12\widetilde{R}(n)-10}\) and \(\dim S_m=\dim\cE_{12\widetilde{R}(n)-10}\).

Finally, \(12\widetilde{R}(n)-10\equiv 2\pmod{12}\), so \(\dim\cE_{12\widetilde{R}(n)-10}=\widetilde{R}(n)-1\). The pairs \((x,u)=(\pm(n-2),1)\) are counted by \(\widetilde{R}(n)\), each with summand \(\psi(n-1)\), so \(\widetilde{R}(n)\geq 2\psi(n-1)\geq 6\) and \(\dim S_m\geq 5\).
\end{proof}

\begin{cor}\label{cor:S5}
Let \(n=3\), so that \(k=120\), \(\nu(3)=4\), \(\widetilde{R}(3)=6\), and \(m=5\). Then
\[
S_5\;=\;\Xi_3\cdot\Delta^4\cdot \cE_{62},\qquad \dim S_5=5,
\]
with basis \(\Xi_3\,\Delta^{4+i}\,E_6E_4^{14-3i}\) for \(0\leq i\leq 4\).
\end{cor}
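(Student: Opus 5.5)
This is a specialization of \cref{thm:obstruction} to the prime \(n=3\), so the plan is to check the numerical inputs and then write down an explicit basis of \(\cE_{62}\).

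First I will confirm the numbers. For \(n=3\) one has \(\varphi(3)=2\) and \(\psi(3)=4\), so \(\nu(3)=4\) and \(m=\nu(3)+1=5=(3^2+1)/2\). The value \(\widetilde{R}(3)=6\) is recorded in \cref{sec-4}, and I will recheck it from the definition. The pairs with \(x\in\{\pm1\}\) give \((9-1)/4=2\), and \(u=1\) is the only admissible \(u\), since \(u^2\mid 2\). Each of these two pairs contributes \(\psi(2)=3\), for a total of \(6\). Then \cref{thm:weight-identity} gives \(k(H_9)=12\cdot4+12\cdot6=120\), and \(12\widetilde{R}(3)-10=62\).

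Applying \cref{thm:obstruction} with \(n=3\) then yields \(S_5=\Xi_3\Delta^4\cE_{62}\), with \(\dim S_5=\widetilde{R}(3)-1=5\). I will cross-check this dimension against the classical formula: \(62\equiv2\pmod{12}\), so \(\dim\cE_{62}=\lfloor62/12\rfloor=5\).

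For the basis, I will use the standard description \(\cE_{w}=\bigoplus_{i}\C\,\Delta^iE_4^aE_6^b\) with \(4a+6b+12i=w\). Since \(62\equiv2\pmod4\), every monomial \(E_4^aE_6^b\) of weight \(62\) has \(b\) odd. This reduces to \(\cE_{62}=E_6\cE_{56}\), and \(\cE_{56}\) has basis \(\Delta^iE_4^{14-3i}\) for \(0\le i\le4\) (the weight check is \(12i+4(14-3i)=56\)). Hence \(\Delta^iE_6E_4^{14-3i}\), for \(0\le i\le 4\), is a basis of \(\cE_{62}\). These five elements are linearly independent because they have distinct \(q_1\)-orders \(i\). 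Multiplying by \(\Xi_3\Delta^4\), which is injective by the last step of the proof of \cref{thm:obstruction}, gives the stated basis \(\Xi_3\Delta^{4+i}E_6E_4^{14-3i}\).

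No step is a real obstacle; the only care needed is in recomputing \(\widetilde{R}(3)\) and making sure the parity of \(b\) forces the factor \(E_6\).
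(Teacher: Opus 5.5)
Your proposal is correct and follows the paper's proof, which is simply \cref{thm:obstruction} at \(n=3\) together with the observation that \(E_6E_4^{14-3i}\Delta^i\), \(0\le i\le 4\), form a basis of \(\cE_{62}\); your recomputation of \(\nu(3)\), \(\widetilde{R}(3)\), \(k\) and the weight \(62\) spells out that specialization. The one imprecision is calling the monomial description of \(\cE_w\) a direct sum, which it is not because \(E_4^3-E_6^2=1728\Delta\); you only use it as a spanning set and then conclude from linear independence and the dimension count \(\dim\cE_{62}=5\), so the argument is unaffected.
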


\begin{proof}
This is \cref{thm:obstruction} at \(n=3\); the monomials \(E_6E_4^{14-3i}\Delta^{i}\), \(0\leq i\leq 4\), are a basis of \(\cE_{62}\).
\end{proof}

The level-\(m\) data therefore determine \(\phi_m\) for \(m\geq k/10\) by \cref{prop:tail}, and fail to determine it at \(m=\nu(n)+1\) for every prime \(n\geq 3\) by \cref{thm:obstruction}, with defect \(\widetilde{R}(n)-1\); at \(n=3\) the defect is \(5\) by \cref{cor:S5}. What determines \(G_{n^2}\) is the interior condition of \cref{prop:linear}, not the levelwise data.

%% file: sec-7.tex
\section{Exact Humbert-form reconstruction}
\label{sec-7}

The results of \cref{sec-6} turn the construction of the exact Humbert form \(G_{n^2}\) into
linear algebra in spaces of Siegel modular forms whose dimensions and admissible monomials are
known in advance. The invariant equation \(F_n\) is then obtained by the explicit change of
coordinates of \cref{lem:igusa-relations}.
 
\subsection{The general reconstruction}
\label{subsec:scheme}

Throughout, \(k=k(H_{n^2})\), \(\cR=\C[\psi_4,\psi_6,\chi_{12}]\), and \(\cR_w\), \(M_w^{\mathrm{even}}\) denote the graded pieces of weight \(w\).
Normalize \(G_{n^2}\) by requiring equality in \cref{cor:boundary-product} with constant one; this fixes the scalar of \cref{cor:exact-form} and makes \(g_n\) explicit.
For \(n=3\) the pairs counted by \(\widetilde{R}(3)\) are \((x,u)=(\pm 1,1)\) with \(M=2\), and for \(n=5\) they are \((\pm 1,1)\), \((\pm 3,1)\), \((\pm 3,2)\) with \(M=6,4,1\); by \cref{cor:sharp} each unordered pair \(\{x,-x\}\) contributes multiplicity two, so
\begin{equation}
\label{eq:g-restrict}
\begin{split}
g_3&=\bigl(\Delta(\tau_1)\Delta(\tau_2)\bigr)^{10}\,\Phi_2(j_1,j_2)^2,\\
g_5&=\bigl(\Delta(\tau_1)\Delta(\tau_2)\bigr)^{50}\,\Phi_1(j_1,j_2)^2\,\Phi_4(j_1,j_2)^2\,\Phi_6(j_1,j_2)^2,
\end{split}
\end{equation}
with \(\Phi_1=j_1-j_2\).
The degrees in \(j_2\) are \(2\psi(2)=6=\widetilde{R}(3)\) and \(2\bigl(\psi(1)+\psi(4)+\psi(6)\bigr)=38=\widetilde{R}(5)\), and the \(q_2\)-orders are \(\nu(3)=4\) and \(\nu(5)=12\), as they must be.

\begin{prop}\label{prop:scheme}
Let \(n\geq 2\) and \(k=k(H_{n^2})\), with \(G_{n^2}\) normalized as above.
\begin{enumerate}
\item There is a unique \(G_0\in \cR_k\) with \(\rho(G_0)=g_n\), namely the zeroth Taylor coefficient of \(G_{n^2}\) in \cref{cor:taylor}; it solves a linear system in \(\dim \cR_k\) unknowns.
\item Fix \(a,e\in\Z\) with \(\gcd(a,e,n)=1\). The affine linear system
\[
\bigl\{\,W\in M^{\mathrm{even}}_{k-10}\;:\;G_0+\chi_{10}W\ \text{vanishes identically on}\ \{nz=a\tau_1+e\}\,\bigr\}
\]
has the unique solution \(W=(G_{n^2}-G_0)/\chi_{10}\), so \(G_0+\chi_{10}W=G_{n^2}\) with no scalar ambiguity.
\item \(G_{n^2}/\chi_{10}^{\nu(n)}\) lies in \(\C[T,\psi_4,\psi_6,\chi_{10}]\) after the substitution \(\chi_{12}=T\chi_{10}\), and the inverse of the isomorphism of \cref{lem:igusa-relations} carries it to a nonzero constant multiple of \(F_n\); normalizing the content of the resulting integer polynomial determines \(F_n\) up to sign.
\end{enumerate}
\end{prop}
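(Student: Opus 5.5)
The three parts are proved in order, each by citing the structural results of \cref{sec-6} and \cref{lem:igusa-relations}.

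For (1), expand the normalized \(G_{n^2}\) as in \cref{cor:taylor}: \(G_{n^2}=\sum_e\chi_{10}^{\,e}F_e\) with \(F_e\in\cR_{k-10e}\). Since \(\rho(\chi_{10})=0\) by \cref{lem:principal}, we get \(\rho(G_{n^2})=\rho(F_0)\). By the normalization this equals \(g_n\), so \(G_0:=F_0\) is a solution. Uniqueness follows from the injectivity of \(\rho\) on \(\cR\) in \cref{cor:taylor}. Writing \(G_0\) in the monomial basis of \(\cR_k\) and imposing \(\rho(G_0)=g_n\) on enough Fourier coefficients in \((q_1,q_2)\) gives a linear system in \(\dim\cR_k\) unknowns. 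Injectivity of \(\rho\) on \(\cR_k\) guarantees that finitely many coefficients already determine the solution, since \(\cR_k\) is finite-dimensional.

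For (2), existence comes first. By (1), \(\rho(G_{n^2}-G_0)=0\), and \(G_{n^2}-G_0\) has even weight, so \cref{lem:principal} gives \(G_{n^2}-G_0=\chi_{10}W_0\) with \(W_0\in M^{\mathrm{even}}_{k-10}\). Then \(G_0+\chi_{10}W_0=G_{n^2}\) vanishes on \(\{nz=a\tau_1+e\}\) by \cref{prop:torsion-vanishing}.

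For uniqueness, suppose \(W\) is any solution. Then \(G:=G_0+\chi_{10}W\in M_k\) vanishes on that divisor, so by \cref{prop:linear} \(G=cG_{n^2}\) for some \(c\in\C\). Applying \(\rho\) gives \(g_n=\rho(G_0)=\rho(G)=c\,g_n\). Since \(g_n\neq0\) by \cref{lem:poly}, we get \(c=1\). Therefore \(\chi_{10}(W-W_0)=0\), hence \(W=W_0\), because \(M^{\mathrm{even}}\) is a domain. The solution set is thus the single point \(W_0\), and the scalar ambiguity of \cref{prop:linear} is removed by the affine normalization through \(G_0\).

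This step, applying \cref{prop:linear} to an arbitrary solution and then pinning the scalar via \(\rho\), is the only place where an interior condition enters. It is the main point of the proposition; the rest is bookkeeping. The condition "vanishes identically on the divisor" is linear in \(W\), and it becomes a finite linear system by the stabilization argument preceding \cref{prop:tail}.

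For (3), \cref{thm:main} gives \(G_{n^2}=c\,\chi_{10}^{\nu(n)}F_n(\tau)\), and \cref{lem:igusa-relations} gives \(F_n(\tau)=\Theta(F_n)\in\C[T,\psi_4,\psi_6,\chi_{10}]\). Substituting \(\chi_{12}=T\chi_{10}\) in the monomials of \(G_{n^2}\), exactly as in the proof of \cref{thm:fj-order}, exhibits \(G_{n^2}/\chi_{10}^{\nu(n)}\) as \(c\,\Theta(F_n)\). Applying \(\Theta^{-1}\) then returns \(c\,F_n\).

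Because all constants in \(\Theta\) are rational, a rational normalization of \(G_{n^2}\) yields a polynomial in \(\Q[J_2,J_4,J_6,J_{10}]\) proportional to \(F_n\). Clearing denominators and dividing by the content gives a primitive integer polynomial. By the uniqueness of the primitive equation in \cref{sec-2}, this polynomial equals \(\pm F_n\).
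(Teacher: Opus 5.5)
Your proposal is correct and follows the paper's proof essentially step for step: injectivity of \(\rho\) on \(\cR\) for (1); \cref{lem:principal} for existence in (2), and \cref{prop:linear} combined with \(\rho\) and \(g_n\neq 0\) to fix the scalar for uniqueness; and, for (3), the monomial rewriting from the proof of \cref{thm:fj-order}, then \(\Theta^{-1}\) and content normalization. The differences are cosmetic: you cite \cref{prop:torsion-vanishing} where the paper cites \cref{sec-2}, and you add remarks on why the linear systems can be truncated to finitely many conditions.
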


\begin{proof}
(1) The map \(\rho\) is injective on \(\cR\) by \cref{cor:taylor}, and \(\rho(G_0)=\rho(G_{n^2})=g_n\), since \(\rho\) annihilates \(\chi_{10}\).

(2) Existence: \(\rho(G_{n^2}-G_0)=0\), so \(\chi_{10}\mid G_{n^2}-G_0\) by \cref{lem:principal}, the quotient \(W\) lies in \(M^{\mathrm{even}}_{k-10}\), and \(G_0+\chi_{10}W=G_{n^2}\) vanishes on \(\{nz=a\tau_1+e\}\) by \cref{sec-2}. Uniqueness: if \(W'\) is a solution, then \(G'=G_0+\chi_{10}W'\) is a scalar multiple of \(G_{n^2}\) by \cref{prop:linear}, say \(G'=c'\,G_{n^2}\); applying \(\rho\) gives \(g_n=c'\,g_n\), and \(g_n\neq 0\) by \cref{lem:poly}, so \(c'=1\) and \(W'=W\).

(3) By \cref{cor:exact-form} and \cref{thm:main}, \(G_{n^2}=c\,\chi_{10}^{\nu(n)}F_n(\tau)\) with \(c\in\C^\times\). The proof of \cref{thm:fj-order} writes the monomials of \(G_{n^2}\) as \(\psi_4^a\psi_6^b\chi_{10}^{\,\nu(n)+j-i}\chi_{12}^{\,i}\) with \(\nu(n)+j-i\geq 0\) and \(j\geq 0\); dividing by \(\chi_{10}^{\nu(n)}\) and substituting \(\chi_{12}=T\chi_{10}\) yields
\[
\psi_4^a\psi_6^bT^i\chi_{10}^{\,j}\in\C[T,\psi_4,\psi_6,\chi_{10}].
\]
The inverse of \cref{lem:igusa-relations} returns \(c\,F_n\), and \(F_n\in\Z[J_2,J_4,J_6,J_{10}]\) is primitive and irreducible, hence determined up to sign once its content is normalized.
\end{proof}

\begin{cor}
\label{cor:support}
Let \(n\geq 2\), let \(\nu=\nu(n)\), and let \(d=\degw F_n\).
\begin{enumerate}
\item Under the isomorphism of \cref{lem:igusa-relations}, \(F_n\) is a linear combination of the monomials \(T^{t}\psi_4^{a}\psi_6^{b}\chi_{10}^{u}\) with \(2t+4a+6b+10u=d\) and \(u\geq t-\nu\).
\item The solution \(W\) of \cref{prop:scheme}(2) is a linear combination of the monomials \(\psi_4^{a}\psi_6^{b}\chi_{10}^{u}\chi_{12}^{v}\) of weight \(k-10\) with \(u+v\geq\nu-1\).
\end{enumerate}
\end{cor}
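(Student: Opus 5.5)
Both parts come from the monomial bookkeeping in the proof of \cref{thm:fj-order} combined with \cref{thm:main}; no new input is needed.

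For (1), write \(Q=\Theta(F_n)=\sum q_{t,u,a,b}\,T^t\psi_4^a\psi_6^b\chi_{10}^u\) in \(\C[T,\psi_4,\psi_6,\chi_{10}]\), using \cref{lem:igusa-relations}. Weighted homogeneity of \(F_n\) of degree \(d\) transfers to \(Q\). This holds because \(\Theta\) sends each \(J_w\) to a weight-\(w\) expression, with \(T\) of weight \(2\), \(\psi_4\) of weight \(4\), \(\psi_6\) of weight \(6\) and \(\chi_{10}\) of weight \(10\). So every monomial in the support satisfies \(2t+4a+6b+10u=d\).

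\Cref{cor:exact-form} with \(e_n=\nu\) (\cref{thm:main}) gives \(G_{n^2}=c\,\chi_{10}^{\nu}Q\). The injectivity argument in the proof of \cref{thm:fj-order} shows the monomials of \(G_{n^2}\) are exactly \(\psi_4^a\psi_6^b\chi_{10}^{\nu+u-t}\chi_{12}^t\), with no cancellation. Since \(G_{n^2}\) lies in the polynomial ring \(M^{\mathrm{even}}\), each exponent \(\nu+u-t\) is nonnegative, that is \(u\geq t-\nu\). This step is just a citation of that proof with \(e_n\) replaced by \(\nu\).

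For (2), let \(W=(G_{n^2}-G_0)/\chi_{10}\) as in \cref{prop:scheme}(2); it has weight \(k-10\). Expand \(G_{n^2}=\sum_e\chi_{10}^eF_e\) as in \cref{cor:taylor}, with \(F_e\in\cR\). Then \(G_0=F_0\) and \(W=\sum_{e\geq1}\chi_{10}^{e-1}F_e\). Every monomial of \(G_{n^2}\) has \(u+v\geq\nu\), by the equality \(\min(u+v)=e_n=\nu\) in the proof of \cref{thm:fj-order}. Here \(G_0\) collects exactly the monomials of \(G_{n^2}\) with \(u=0\), and \(\chi_{10}W\) collects those with \(u\geq1\). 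Dividing the latter by \(\chi_{10}\) lowers \(u\) by one, so each monomial of \(W\) has \(u+v\geq\nu-1\).

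The only point needing care is that the decomposition is monomial-wise. This follows from uniqueness of expansion in the free algebra \(\C[\psi_4,\psi_6,\chi_{10},\chi_{12}]\), and \(W\) is unique by \cref{prop:scheme}(2). I do not expect a genuine obstacle; the substantive content is \(e_n=\nu\), which \cref{thm:main} already supplies.
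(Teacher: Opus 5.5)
Your proposal is correct and follows the paper's argument. Part (1) is read off from the no-cancellation monomial bookkeeping in the proof of \cref{thm:fj-order} with \(e_n=\nu\), and part (2) from the fact that \(G_0\) is exactly the \(\chi_{10}\)-free part of \(G_{n^2}\): dividing the remaining monomials, each with \(u+v\geq\nu\), by \(\chi_{10}\) gives \(u+v\geq\nu-1\).
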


\begin{proof}
Write the monomials of \(G_{n^2}\) as \(\psi_4^{a}\psi_6^{b}\chi_{10}^{\,\nu+j-i}\chi_{12}^{\,i}\), as in the proof of \cref{prop:scheme}(3).
For (1), division by \(\chi_{10}^{\nu}\) and the substitution \(\chi_{12}=T\chi_{10}\) give \(T^{i}\psi_4^{a}\psi_6^{b}\chi_{10}^{\,j}\); setting \(t=i\) and \(u=j\), the two inequalities read \(u\geq 0\) and \(u\geq t-\nu\), and weighted homogeneity gives \(2t+4a+6b+10u=d\).
For (2), the monomials with \(\nu+j-i=0\) lie in \(\cR\) and are cancelled by \(G_0\), so \(W\) is supported on the monomials with \(\nu+j-i\geq 1\), for which \(u+v=\nu+j-1\geq\nu-1\).
\end{proof}

Write \(N_n\) for the number of monomials of \cref{cor:support}(1), determined by \(\degw F_n\) and \(\nu(n)\), hence by \cref{thm:main}.

\begin{lem}
\label{lem:wellposed}
Let \(n\geq 2\), let \(d=\degw F_n\), and let \(V\subset\C[J_2,J_4,J_6,J_{10}]_d\) be the span of the monomials of \cref{cor:support}(1). A form \(G\in V\) satisfies \(G\bigl(I(C)\bigr)=0\) for every \(C\in\cL_n\) if and only if \(G\) is a scalar multiple of \(F_n\), and the evaluation map on \(\cL_n\) has rank \(\dim V-1\) on \(V\).
\end{lem}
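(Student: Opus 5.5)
The plan is to show that the kernel of the evaluation map on \(\cL_n\), restricted to \(V\), is exactly the line \(\C F_n\); the rank statement then follows from rank–nullity. First we check that \(F_n\in V\). This is \cref{cor:support}(1): under the isomorphism \(\Theta\) of \cref{lem:igusa-relations}, \(F_n\) is a combination of the admissible monomials \(T^t\psi_4^a\psi_6^b\chi_{10}^u\). Here \(V\) is read as the \(\Theta\)-preimage of their span, which sits in \(\C[J_2,J_4,J_6,J_{10}]_d\) because \(\Theta\) is a graded isomorphism. Since \(F_n\) vanishes on \(\cL_n\) by definition, the kernel contains \(\C F_n\).

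For the converse, let \(G\in V\) vanish at \(I(C)\) for every \(C\in\cL_n\). The zero set \(V(G)\subset\bP(2,4,6,10)\) is closed and contains \(\cL_n\), so it contains its closure \(V(F_n)\). We then use that \(F_n\) is irreducible and that the dehomogenized ideal on the chart \(\{J_2\neq0\}\) is radical, as shown in the proof of \cref{thm:closed}; the Nullstellensatz on that chart gives \(F_n\mid G\) after rehomogenizing. To avoid fuss about powers of \(J_2\), we can instead argue through modular forms. Set \(\widetilde G:=\chi_{10}^{\nu(n)}\Theta(G)\). Since \(G\in V\), the monomials satisfy \(u\geq t-\nu\), so by the computation in the proof of \cref{thm:fj-order} the form \(\widetilde G\) is a holomorphic Siegel modular form of weight \(d+10\nu(n)=k(H_{n^2})\) (\cref{thm:main}). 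It vanishes on \(\iota(\cL_n)\), which is dense in \(H_{n^2}\), so it vanishes on all of \(H_{n^2}\), in particular on \(\{nz=a\tau_1+e\}\). \Cref{prop:linear} then gives \(\widetilde G=c\,G_{n^2}=c'\chi_{10}^{\nu(n)}F_n(\tau)\). Cancelling \(\chi_{10}^{\nu(n)}\) and applying \(\Theta^{-1}\) yields \(G=c'F_n\).

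The step needing the most care is the holomorphy of \(\widetilde G\), that is, that the support condition \(u\geq t-\nu\) exactly clears the denominators \(\chi_{10}^{t}\) coming from \(T^t\). The exponent of \(\chi_{10}\) in \(\chi_{10}^{\nu}T^t\chi_{10}^u\) is \(\nu+u-t\geq0\), so this should go through cleanly. Finally, the evaluation map is linear on the finite-dimensional space \(V\) (into functions on \(\cL_n\), or on a sufficiently large finite set of points of \(\cL_n\)). Its kernel is one-dimensional, so its rank is \(\dim V-1\).
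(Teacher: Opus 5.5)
Your argument is correct, but its main line is not the paper's. The paper proves the converse purely algebraically. A weighted-homogeneous \(G\) of degree \(d\) vanishing on \(\cL_n\) vanishes on its closure \(V(F_n)\), hence on the affine cone \(\{F_n=0\}\subset\bA^4\). By the Nullstellensatz \(G\in\sqrt{(F_n)}=(F_n)\), the ideal being prime because \(F_n\) is irreducible. The cofactor then has weighted degree zero, so it is a constant. Working on the cone rather than on the chart \(\{J_2\neq 0\}\) removes the ``fuss about powers of \(J_2\)'' that made you switch routes, so the route you sketched first and then dropped is already essentially the paper's proof.

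Your modular route is also valid. You clear the poles of \(\Theta(G)\) with \(\chi_{10}^{\nu(n)}\) using \(u\geq t-\nu\), obtaining a holomorphic form of weight \(d+10\nu(n)=k(H_{n^2})\). It vanishes on the dense subset \(\iota(\cL_n)\subset H_{n^2}\), hence on all of \(H_{n^2}\). You then invoke \cref{prop:linear} and \cref{thm:main}, and cancel \(\chi_{10}^{\nu(n)}\) using the algebraic independence of \(T,\psi_4,\psi_6,\chi_{10}\). There is no circularity, since neither result depends on this lemma. Only the Baily--Borel half of the proof of \cref{prop:linear} is actually needed, because you already have vanishing on all of \(H_{n^2}\) rather than on a single divisor \(\{nz=a\tau_1+e\}\).

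The trade-off is this. The paper's argument needs only the definition of \(F_n\) and proves more: uniqueness holds among all weighted-homogeneous forms of degree \(d\), with \(V\) entering only through \(F_n\in V\) and the rank count. Yours leans on the deepest results of the paper, namely the exact value \(e_n=\nu(n)\) and the extension argument behind \cref{prop:linear}. It also genuinely needs \(G\in V\), since that is what makes \(\chi_{10}^{\nu(n)}\Theta(G)\) holomorphic. What your route buys is a direct modular interpretation: under \(G\mapsto\chi_{10}^{\nu(n)}\Theta(G)\), the kernel of evaluation on \(V\) is exactly the line \(\C\,G_{n^2}\). The rank statement follows by rank--nullity in both versions.
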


\begin{proof}
By \cref{cor:support}(1) the form \(F_n\) lies in \(V\), and it vanishes on \(\cL_n\).
Conversely a weighted-homogeneous form of degree \(d\) vanishing on \(\cL_n\) lies in the ideal \((F_n)\), which is prime because \(F_n\) is irreducible, and the quotient is weighted-homogeneous of degree zero, hence a constant.
The rank statement follows from rank-nullity.
\end{proof}

The conditions in \cref{prop:scheme}(2) are computable linear functionals.
Writing \(G_0+\chi_{10}W=\sum_m\phi_m\,q_2^m\), the vanishing on \(\{nz=a\tau_1+e\}\) is the vanishing of every \(\phi_m\bigl(\tau_1,(a\tau_1+e)/n\bigr)\) as in \cref{prop:torsion-vanishing}, and
\[
\phi_m(\chi_{10}W)=\sum_{i=1}^{m}\phi_i(\chi_{10})\phi_{m-i}(W)
\]
is linear in the coefficients of \(W\).
The Fourier-Jacobi coefficients of \(\chi_{10}\) and \(\chi_{12}\) are given by the Maass lift \cite[Ch.~6]{EichlerZagier}, and those of monomials by multiplication of Jacobi expansions.
Substituting \(\zeta=e^{2\pi ie/n}q_1^{a/n}\) and summing over the residue classes of \(r\) modulo \(n\) eliminates the roots of unity, leaving linear conditions with rational coefficients.
The conditions \(\phi_m(G_0+\chi_{10}W)=0\) for \(m<\nu(n)\) and \(\phi_{\nu(n)}(G_0+\chi_{10}W)=\phi_{\nu(n)}\), with the right-hand side of \cref{prop:theta}, hold at the solution and may be added freely.

\begin{lem}
\label{lem:regrade}
Let \(\phi_m=\sum_{N,r}c(N,r)\,q_1^N\zeta^r\) be a holomorphic Jacobi form of index \(m\), and let \(a,e\in\Z\). Then
\[
q_1^{\,a^2m/n}\;\phi_m\Bigl(\tau_1,\tfrac{a\tau_1+e}{n}\Bigr)
=\sum_{N,r}c(N,r)\,e^{2\pi ier/n}\,q_1^{\,(nN+ar+a^2m)/n},
\]
and every exponent satisfies \(nN+ar+a^2m\geq(n-1)N\geq 0\). In the grading by \(s=nN+ar+a^2m\), which is additive under multiplication of Jacobi expansions, every restricted level is a power series in \(q_1^{1/n}\), and a product of expansions known through the order \(s\leq S\) is known through the same order.
\end{lem}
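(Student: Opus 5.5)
The identity is a direct substitution, and the inequality follows from the holomorphy condition \(4Nm\ge r^2\) via an AM--GM bound. We set \(z=(a\tau_1+e)/n\), so that \(\zeta^r=e^{2\pi i r(a\tau_1+e)/n}=e^{2\pi ier/n}q_1^{ar/n}\). Multiplying \(q_1^N\zeta^r\) by \(q_1^{a^2m/n}\) gives exponent \(N+ar/n+a^2m/n=(nN+ar+a^2m)/n\), which is the displayed identity term by term. Absolute convergence of the Jacobi expansion on \(\bH\times\C\) justifies the regrouping.

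For the inequality we use \(c(N,r)=0\) unless \(4Nm\ge r^2\), so \(|r|\le 2\sqrt{Nm}\). Then
\[
nN+ar+a^2m\;\ge\; nN-2|a|\sqrt{Nm}+a^2m\;=\;(n-1)N+\bigl(\sqrt N-|a|\sqrt m\bigr)^2\;\ge\;(n-1)N\;\ge\;0,
\]
using \(N\ge 0\) and \(n\ge 2\). This AM--GM step is the only nonformal point.

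The grading statements follow in three steps.

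\emph{Additivity.} The grading \(s\) is additive when indices add: if \((N_1,r_1)\) has index \(m_1\) and \((N_2,r_2)\) has index \(m_2\), the product term has \((N_1+N_2,r_1+r_2)\) and index \(m_1+m_2\). The quantity \(nN+ar+a^2m\) is linear in \((N,r,m)\), so it adds.

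\emph{Power series in \(q_1^{1/n}\).} Since \(s\in\Z_{\ge0}\), each restricted level is a power series in \(q_1^{1/n}\) with nonnegative exponents. Each \(s\) has only finitely many contributing \((N,r)\), because \((n-1)N\le s\) bounds \(N\) and \(|r|\le 2\sqrt{Nm}\) bounds \(r\).

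\emph{Truncation.} Since every \(s\ge0\), a term of order \(s\le S\) in a product only involves factor terms with \(s_1,s_2\le S\). Hence truncation at \(s\le S\) is compatible with multiplication, as claimed.
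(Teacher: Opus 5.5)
Your proof is correct and follows the paper's argument. The identity is the direct substitution \(\zeta^r=e^{2\pi ier/n}q_1^{ar/n}\), the bound is the same AM--GM estimate \(|ar|\le 2\sqrt{a^2mN}\le a^2m+N\) (which you write as a completed square), and additivity comes from linearity of \(s\) in \((N,r,m)\). Your extra remarks on finiteness of each graded piece (using \(n\ge 2\)) and on truncation being compatible with products make explicit what the paper leaves implicit.
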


\begin{proof}
Holomorphy gives \(r^2\leq 4mN\), so \(|ar|\leq 2\sqrt{a^2mN}\leq a^2m+N\) by the inequality of arithmetic and geometric means, and \(nN+ar+a^2m\geq nN-N\). The grading is additive because \(N\), \(r\), and the index are.
\end{proof}

Normalize the generators by \(\phi_0(\psi_4)=E_4\) and \(\phi_0(\psi_6)=E_6\), and let \(\chi_{10}\), \(\chi_{12}\) be the Maass lifts of the index-one Jacobi cusp forms \(\Delta\,\wphi_{-2,1}\), \(\Delta\,\wphi_{0,1}\) in the notation of \cite{EichlerZagier}, normalized by \(c(1,1)=1\); equivalently \(\rho(\chi_{12})=12\,\Delta(\tau_1)\Delta(\tau_2)\).
The constants of \cref{lem:igusa-relations} are then those of \cite{2016-3}:
\begin{equation}
\label{eq:constants}
\begin{split}
J_2&\mapsto T,\qquad J_4\mapsto\tfrac{1}{24}\bigl(T^2-\psi_4\bigr),\\
J_6&\mapsto\tfrac{1}{432}\bigl(T^3-3T\psi_4+2\psi_6\bigr),\qquad J_{10}\mapsto\chi_{10},
\end{split}
\end{equation}
so that under the inverse \(\psi_4\mapsto J_2^2-24J_4\) and \(\psi_6\mapsto J_2^3-36J_2J_4+216J_6\).

No a priori bound on the number of levels is needed.
Truncating the system of \cref{prop:scheme}(2) to levels \(m\leq L\) and to finitely many \(q_1\)-coefficients per level only enlarges its solution set, which always contains \(W\).
If a truncated system has a unique solution over \(\F_p\), for \(p\) avoiding the denominators, then the nullity over \(\Q\) of its linear part is zero, so the truncated system has the unique solution \(W\) over \(\Q\), and the \(\F_p\)-solution is its reduction.
Each run therefore yields an upper bound for the constant \(m_0(n,k)\) of \cref{sec-6}.

\begin{lem}
\label{lem:level-lower}
Let \(j=k/10\), an integer by \cref{cor:j10-degree}. The form \(W'=W+\chi_{10}^{\,j-1}\) satisfies \(\rho(G_0+\chi_{10}W')=g_n\) and every condition of \cref{prop:scheme}(2) at the levels \(m<j\). Any truncation with a unique solution therefore contains a level \(m\geq j\), and \(m_0(n,k)\geq k/10\).
\end{lem}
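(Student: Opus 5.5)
We have to check two things: that \(\rho\) of \(G_0+\chi_{10}W'\) is \(g_n\), and that the torsion-vanishing conditions hold at every level \(m<j\). The lower bound on \(m_0\) then follows because \(W'\neq W\).

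The restriction claim is immediate. By \cref{lem:principal}, \(\rho(\chi_{10})=0\), so
\[
\rho(G_0+\chi_{10}W')=\rho(G_0)=g_n,
\]
using \cref{prop:scheme}(1). The weight also matches: \(\chi_{10}^{\,j-1}\) has weight \(10(j-1)=k-10\), so \(W'\in M^{\mathrm{even}}_{k-10}\).

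For the level conditions, write \(G_0+\chi_{10}W'=G_{n^2}+\chi_{10}^{\,j}\). The Fourier-Jacobi coefficients are linear, so
\[
\phi_m(G_0+\chi_{10}W')=\phi_m(G_{n^2})+\phi_m(\chi_{10}^{\,j}).
\]
\begin{itemize}
\item The first term vanishes at every point of exact order \(n\) by \cref{prop:torsion-vanishing}.
\item Since \(\ord_{q_2}(\chi_{10})=1\) and orders add, \(\ord_{q_2}(\chi_{10}^{\,j})=j\). Hence \(\phi_m(\chi_{10}^{\,j})=0\) identically for \(m<j\).
\end{itemize}
So every level-\(m\) condition of \cref{prop:scheme}(2) with \(m<j\) holds for \(W'\). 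The same argument covers the optional conditions listed after \cref{lem:wellposed}: the conditions for \(m<\nu(n)\) hold trivially. The prescribed value at \(m=\nu(n)\) still holds, because \(\nu(n)<j\). This follows from \(k=10\nu(n)+\degw F_n\) with \(\degw F_n>0\), which is \cref{thm:main}.

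Now suppose a truncation uses only levels \(m\leq L\) with \(L<j\). Both \(W\) and \(W'\) solve it, and \(W'-W=\chi_{10}^{\,j-1}\neq 0\), so the solution is not unique. Hence any truncation with a unique solution must contain a level \(m\geq j\).

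It remains to get \(m_0(n,k)\geq k/10\). Take \(G=G_{n^2}+\chi_{10}^{\,j}\in M_k\). Its coefficients \(\phi_m\) vanish at all points of exact order \(n\) for every \(m\leq j-1\). But \(G\) is not a scalar multiple of \(G_{n^2}\), since \(\rho(G)=g_n\) while \(G\neq G_{n^2}\). So \(m_0(n,k)>j-1\), that is, \(m_0(n,k)\geq j=k/10\).

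The only delicate point is that \(\chi_{10}^{\,j}\) is not a scalar multiple of \(G_{n^2}\) and is nonzero. Both are clear, because \(H_1\neq H_{n^2}\) and the weights agree.
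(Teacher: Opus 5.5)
Your proof is correct and follows the paper's argument: adding \(\chi_{10}^{\,j}\) leaves the restriction unchanged because \(\rho(\chi_{10})=0\), and it leaves every Fourier--Jacobi coefficient below level \(j\) unchanged because \(\ord_{q_2}(\chi_{10}^{\,j})=j\). You also spell out two points the paper leaves implicit: the non-uniqueness coming from \(W'-W=\chi_{10}^{\,j-1}\neq 0\), and the bound \(m_0(n,k)\geq k/10\) via the explicit form \(G_{n^2}+\chi_{10}^{\,j}\), which cannot be a multiple of \(G_{n^2}\) because \(\rho\) of it equals \(g_n\neq 0\).
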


\begin{proof}
The form \(\chi_{10}^{\,j}\) has weight \(k\) and \(q_2\)-order \(j\), so its Fourier-Jacobi coefficients \(\phi_m\) vanish for \(m<j\); moreover \(\rho(\chi_{10}^{\,j})=0\), so \(G_0\) is unchanged.
\end{proof}

In particular \(m_0(2,60)\geq 6\), \(m_0(3,120)\geq 12\), \(m_0(4,240)\geq 24\), \(m_0(5,600)\geq 60\), and \(m_0(7,1680)\geq 168\).
The sizes of the two systems, the reduction of \cref{cor:support}(2), and the number \(N_n\) are recorded in \cref{tab:sizes}.

\begin{table}[ht]
\caption{Sizes of the linear systems of \cref{prop:scheme}, against the dimension of the space of weighted-homogeneous candidates of degree \(\degw F_n\) from \cref{sec-5}. The unknowns of step (2) number \(\dim M^{\mathrm{even}}_{k-10}\) before the reduction of \cref{cor:support}(2) and \(N_n'\) after it, and \(N_n\) is the number of monomials of \cref{cor:support}(1). The conditions of the two systems are exact identities of \(q\)-series; the interpolation counted by \(N_n\) uses sampled points.}
\label{tab:sizes}
\centering
\begin{tabular}{cccccccc}
\hline
\(n\) & \(k\) & \(\degw F_n\) & \(\dim \cR_k\) & \(\dim M^{\mathrm{even}}_{k-10}\) & \(N_n'\) & candidates & \(N_n\) \\
\hline
2 & 60   & 30   & 21    & 31     & 20     & 47      & 26     \\
3 & 120  & 80   & 66    & 227    & 178    & 521     & 206    \\
4 & 240  & 180  & 231   & 1714   & 1459   & 4834    & 1579   \\
5 & 600  & 480  & 1326  & 25735  & 22870  & 82189   & 23650  \\
7 & 1680 & 1440 & 10011 & 554638 & 519832 & 2121445 & 526735 \\
\hline
\end{tabular}
\end{table}

The two reductions run in opposite directions.
That of \cref{cor:support}(1) improves with \(n\), from a factor \(2.5\) at \(n=3\) to \(3.1\) at \(n=4\), \(3.5\) at \(n=5\), and \(4.0\) at \(n=7\); that of \cref{cor:support}(2) decays, removing \(22\%\) of the unknowns at \(n=3\), \(15\%\) at \(n=4\), \(11\%\) at \(n=5\), and \(6\%\) at \(n=7\).

\begin{defn}
\label{defn:dominant}
Let \(n\geq 2\). A family \(\{C_t\}\) of genus-two curves, with parameter \(t\) ranging over an irreducible variety, is \textbf{dominant} for \(\cL_n\) if its members lie in \(\cL_n\) and their image under \(I\) is dense in \(\cL_n\).
\end{defn}

\begin{prop}
\label{prop:determination}
Let \(n\geq 2\), let \(\{C_t\}\) be a family dominant for \(\cL_n\) and defined over \(\Z[1/N_0]\) for some \(N_0\geq 1\), let \(d=\degw F_n\), and let \(L\in\Z\) be the coefficient of \(J_{10}^{\,d/10}\) in \(F_n\), nonzero by \cref{cor:j10-degree}.
\begin{enumerate}
\item \(F_n\bigl(I(C_t)\bigr)=0\) identically in \(t\).
\item Let \(p\nmid N_0L\), let \(t_1,\dots,t_{N_n-1}\) be parameter values over \(\F_p\) at which \(C_{t_i}\) is defined, let \(A\) be the matrix whose \(i\)-th row lists the values at \(I(C_{t_i})\) of the monomials of \cref{cor:support}(1) other than \(J_{10}^{\,d/10}\), and let \(b\) be the vector of values of \(J_{10}^{\,d/10}\) at those points. If \(A\) is invertible over \(\F_p\), then the unique solution of \(Ax=-b\) is the coefficient vector of \(L^{-1}\overline{F}_n\).
\end{enumerate}
\end{prop}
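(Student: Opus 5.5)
Part (1) follows from the definitions. Each \(C_t\) lies in \(\cL_n\), so \(I(C_t)\) lies on the zero set of \(F_n\) in \(\bP(2,4,6,10)\). The composite \(t\mapsto F_n(I(C_t))\) is therefore identically zero, as a polynomial identity in the coordinates of \(t\) over \(\Z[1/N_0]\) after clearing denominators. Since \(F_n\) is weighted homogeneous, this vanishing does not depend on the choice of representative of the weighted projective point. Because the identity holds over \(\Z[1/N_0]\), it survives reduction modulo every prime \(p\nmid N_0\). Hence \(\overline F_n(I(C_{t}))=0\) for every \(\F_p\)-parameter value at which \(C_t\) is defined.

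For part (2), write \(F_n=L\,J_{10}^{d/10}+\sum_\mu c_\mu\,\mu\). The sum runs over the monomials \(\mu\) of \cref{cor:support}(1) other than \(J_{10}^{d/10}\). That \(F_n\) is supported on these monomials is exactly \cref{cor:support}(1), transported through \cref{lem:igusa-relations}. Here I should check that the support condition is stated for the \(J\)-monomials and not only for the \(T,\psi,\chi\)-monomials. If it is only the latter, I will take \(V\) to be the span in \cref{lem:wellposed}, which is how \(N_n\) is defined, and use a basis of \(V\) consisting of \(J_{10}^{d/10}\) and \(N_n-1\) other basis vectors.

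Since \(p\nmid L\), the form \(L^{-1}\overline F_n\) has coefficient \(1\) on \(J_{10}^{d/10}\). By part (1), at each point \(I(C_{t_i})\) it gives \(\sum_\mu (c_\mu/L)\,\mu(I(C_{t_i}))+J_{10}^{d/10}(I(C_{t_i}))=0\). In matrix form this says the vector \(x=(\overline{c_\mu/L})_\mu\) satisfies \(Ax=-b\). When \(A\) is invertible over \(\F_p\), this solution is unique, which proves the claim.

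The main obstacle I expect is keeping part (2) coherent: rows evaluated at weighted projective points must be evaluated consistently. I will fix affine representatives of the points \(I(C_{t_i})\) over \(\F_p\), and use weighted homogeneity of degree \(d\) so that every monomial in a row scales by the same factor. This makes each equation independent of the choice of representative. A second point to check is that reduction modulo \(p\) of \(c_\mu/L\) makes sense, which holds because \(p\nmid L\) and the \(c_\mu\) are integers.
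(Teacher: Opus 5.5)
Your proof is correct and follows the paper's argument: \(F_n(I(C_t))=0\) holds identically over \(\Z[1/N_0]\) and is reduced modulo \(p\), and the normalized coefficient vector of \(L^{-1}\overline{F}_n\) then solves \(Ax=-b\), with uniqueness from the invertibility of \(A\). You were also right that the basis in \cref{cor:support}(1) consists of the modular monomials \(T^t\psi_4^a\psi_6^b\chi_{10}^u\), with \(\chi_{10}^{d/10}=J_{10}^{d/10}\).

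One small correction. In that basis the coefficients \(c_\mu\) are in general only in \(\Z[1/6]\), not in \(\Z\), because of the denominators \(24\) and \(432\) in \cref{eq:constants}, so the reduction also needs \(p\nmid 6\). The paper's proof passes over this point too, and it is harmless: modulo \(2\) or \(3\) one has \(\psi_4\equiv J_2^2\) and \(\psi_6\equiv J_2^3\), so \(A\) has repeated columns and is never invertible.
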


\begin{proof}
(1) The members of the family lie in \(\cL_n\) for \(t\) in a dense open subset of the parameter variety, and \(F_n\) vanishes on \(\cL_n\); so the regular function \(t\mapsto F_n(I(C_t))\) vanishes on a dense open subset of an irreducible reduced variety, hence identically.

(2) Clearing denominators in (1) and reducing modulo \(p\) gives \(\overline{F}_n(I(C_{t_i}))=0\) for every \(i\). By \cref{cor:support}(1) the polynomial \(\overline{F}_n\) is supported on the listed monomials, and its coefficient at \(J_{10}^{\,d/10}\) is \(\overline{L}\neq 0\); so the coefficient vector of \(L^{-1}\overline{F}_n\) has last entry one and its remaining entries solve \(Ax=-b\). Invertibility of \(A\) makes that solution unique.
\end{proof}

\subsection{A dominant family   at  \(n=5\)}
\label{subsec:n5}

For \(n=5\) we use the normal form of \cite{MSV09}, with parameters \(\alpha,\beta\). Set
\begin{equation}
\label{eq:msv-form}
\begin{split}
u_1(x)&=x^2+(\alpha^2+2\alpha+2\beta)\,x+(2\alpha\beta+\beta^2),\\
u_2(x)&=(2\alpha+1)\,x^2+(\alpha^2+2\alpha\beta+2\beta)\,x+\beta^2,\\
u_3(x)&=x^2-(\alpha^2-2\beta)\,x+\beta^2,\\
u_4(\xi)&=(2\alpha+1)\,\xi^2+(2\beta-2\alpha\beta-2\alpha-\alpha^2)\,\xi+(\beta^2+2\alpha\beta),
\end{split}
\end{equation}
let
\[
\kappa(x)=x\,u_1(x)^2/u_2(x)^2,
\]
and let \(c(x)=c_3x^3+c_2x^2+c_1x+c_0\) be the cubic of \cite[Eq.~(10)]{MSV09}. For a root \(\xi\) of \(u_4\) the associated curve is \(C_{\alpha,\beta,\xi}:y^2=x(x-1)\,c(x)\).

\begin{lem}
\label{lem:n5-cover}
With \(u_1,\dots,u_4\) as in \cref{eq:msv-form}, there is in \(\Q[\alpha,\beta][\xi]/(u_4(\xi))\) the polynomial identity
\[
c_3\;\kappa(x)\,\bigl(\kappa(x)-1\bigr)\,\bigl(\kappa(x)-\kappa(\xi)\bigr)\;u_2(x)^{6}
\;=\;
x\,(x-1)\;c(x)\;\bigl[u_1(x)\,u_3(x)\,(x-\xi)\bigr]^{2}.
\]
Consequently \(C_{\alpha,\beta,\xi}\) admits the degree-five covering
\[
(x,y)\;\longmapsto\;\Bigl(\kappa(x),\ \frac{u_1(x)\,u_3(x)\,(x-\xi)}{u_2(x)^{3}}\;y\Bigr)
\]
onto the elliptic curve \(Y^2=c_3\,X(X-1)\bigl(X-\kappa(\xi)\bigr)\), the quadratic twist by \(c_3\) of the Legendre curve with parameter \(\kappa(\xi)\). For \((\alpha,\beta)\) outside the discriminant locus \cite[Eq.~(13)]{MSV09} the quintic \(x(x-1)c(x)\) is separable, \(C_{\alpha,\beta,\xi}\) has genus two, the covering is maximal \cite{MSV09}, and \(C_{\alpha,\beta,\xi}\in\cL_5\).
\end{lem}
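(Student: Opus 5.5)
The proof has two parts: the polynomial identity, which is a finite symbolic verification, and the geometric consequences, which follow from the identity together with results quoted from \cite{MSV09}.

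For the identity I would work in the ring \(\Q[\alpha,\beta][\xi]/(u_4(\xi))\) and clear the denominators of \(\kappa\). Since \(\kappa(x)=x\,u_1(x)^2/u_2(x)^2\), the left side equals
\[
c_3\,x\,u_1(x)^2\bigl(x\,u_1(x)^2-u_2(x)^2\bigr)\bigl(x\,u_1(x)^2u_2(\xi)^2-\xi\,u_1(\xi)^2u_2(x)^2\bigr)\big/u_2(\xi)^2 .
\]
Note that \(u_2(\xi)\) is a unit modulo \(u_4\) away from a resultant locus, so this division is legitimate. I would then establish two factorizations.

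\textbf{First factorization.} I expect
\[
x\,u_1(x)^2-u_2(x)^2=(x-1)\,u_3(x)^2\cdot(\text{const}).
\]
Both sides have degree five and the leading coefficients match. So it suffices to expand the coefficients in \(\alpha,\beta\) and compare them.

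\textbf{Second factorization.} The degree-five polynomial \(x\,u_1(x)^2u_2(\xi)^2-\xi\,u_1(\xi)^2u_2(x)^2\) in \(x\) vanishes at \(x=\xi\). The role of \(u_4(\xi)=0\) is exactly to force a \emph{double} root at \(\xi\), that is, a ramification point of \(\kappa\) over \(\kappa(\xi)\). After dividing by \((x-\xi)^2\), the remaining cubic should equal \(c(x)\) up to the scalar \(c_3\) and a unit.

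Multiplying these out should give the right side. Everything is explicit polynomial algebra, and I would verify it by reduction modulo \(u_4\) in a computer algebra system, checking each coefficient.

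\textbf{Consequences.} From the identity the covering map is immediate. Put \(X=\kappa(x)\) and \(Y=u_1u_3(x-\xi)u_2^{-3}y\). Then
\[
Y^2=u_1^2u_3^2(x-\xi)^2u_2^{-6}\,x(x-1)c(x),
\]
and by the identity this equals \(c_3X(X-1)(X-\kappa(\xi))\). The map has degree \(\deg\kappa=5\) because \(\kappa\) has degree five as a rational function; I would check that \(u_1\) and \(u_2\) have no common root generically, for instance by specializing to one numerical \((\alpha,\beta)\).

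Separability of \(x(x-1)c(x)\) off the discriminant locus, maximality of the covering, and hence membership in \(\cL_5\), I would take from \cite{MSV09}. Alternatively, maximality follows since \(5\) is prime: a factorization \(C\to E'\to E\) would force either the cover \(C\to E'\) or the isogeny \(E'\to E\) to have degree one, and a degree-one \(C\to E'\) is impossible because \(C\) has genus two.

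\textbf{Main obstacle.} The hard step is the second factorization: identifying the cofactor with the cubic \(c(x)\) of \cite[Eq.~(10)]{MSV09} exactly, including the normalization constant \(c_3\). I would first test it at random rational \((\alpha,\beta)\) with \(\xi\) a root of \(u_4\) in a quadratic extension, and only then do the symbolic reduction.
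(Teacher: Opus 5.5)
Your proposal is correct and follows the paper's route: the paper also verifies the identity by exact symbolic expansion after reducing modulo \(u_4(\xi)\), obtains the covering through the substitution \(Y=u_1u_3(x-\xi)u_2^{-3}y\), and takes separability and genus two from \cite{MSV09}. Your two-factorization organization of the check (through the fibres of \(\kappa\) over \(1\) and over \(\kappa(\xi)\), with \(u_4(\xi)=0\) making \(\xi\) a critical point of \(\kappa\)) and your observation that maximality is automatic because \(5\) is prime are sound refinements rather than a different approach.
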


\begin{proof}
After reduction of \(\xi^2\) modulo \(u_4(\xi)\), both sides are polynomials of degree \(15\) in \(x\) with coefficients in \(\Q[\alpha,\beta,\xi]\) of degree at most one in \(\xi\); the identity was verified by exact expansion over \(\Q\). Substituting \(y^2=x(x-1)c(x)\) and setting \(Y=u_1u_3(x-\xi)u_2^{-3}\,y\), the identity gives \(Y^2=c_3\,\kappa(\kappa-1)(\kappa-\kappa(\xi))\), which is the stated covering.
\end{proof}

Let \(\cP_5\) denote the moduli space of pairs \((C,\pi)\), where
\(C\in\cL_5\) and \(\pi:C\to E\) is a degree-five elliptic subcover,
taken up to equivalence. By the normal form of
\cite[Thms.~1--2]{MSV09}, a dense open subset of \(\cP_5\) is
parametrized by the triples \((\alpha,\beta,\xi)\) satisfying
\(u_4(\xi)=0\). The forgetful map
\(\cP_5\to\cL_5\) is dominant and generically of degree two, since a
generic curve in \(\cL_5\) has exactly two degree-five elliptic
subcovers, complementary in the Jacobian \cite{MSV09}.
Consequently, the family of \cref{lem:n5-cover} is dominant for
\(\cL_5\).
For \(n=3\) the family of \cite{2004-2} serves the same role.
 
\subsection{The exact Humbert form \texorpdfstring{\(G_{25}\)}{G25}}
\label{subsec:computation}

Two exact routes lead to the same Humbert form.
The procedure of \cref{subsec:scheme} solves the boundary and torsion systems of \cref{prop:scheme} directly in spaces of modular forms.  Alternatively, one may determine the same coefficient vector by evaluating the reduced monomial basis of \cref{cor:support}(1) on a dominant family.  We execute the first route at \(n=3\), where the answer is known and tests the modular-form machinery, and use the second at \(n=5\) to reconstruct a primitive integral representative of \(G_{25}\).
All linear algebra was carried out over \(\F_p\) for primes \(p<2^{30}\), the first of them \(p=754974721\).


All computations were carried out in Python: the modular solves in native arithmetic over $\mathbb{F}_p$, and the exact expansions over $\mathbb{Z}$, including the symbolic identity in the proof of \cref{lem:F5-identification}, in FLINT through \texttt{python-flint}. No computer algebra system was used.

The expansions are prepared once: the index-one Jacobi cusp forms \(\Delta\,\wphi_{-2,1}\) and \(\Delta\,\wphi_{0,1}\) from products of theta series, the forms \(\chi_{10}\) and \(\chi_{12}\) from their Maass lifts, and \(\psi_4\), \(\psi_6\) from their Fourier-Jacobi expansions, all in the normalizations of \cref{eq:constants}.
Every series is carried in the grading \(s=nN+ar+a^2m\) of \cref{lem:regrade}.

\subsubsection{The case $n=3$}
At \(n=3\) the three steps of \cref{prop:scheme} run as follows.

Step (1) solves for \(G_0\) in the \(66\) unknowns of \(\cR_{120}\), matched against the \(q_2\)-expansion of \(g_3\) in \cref{eq:g-restrict}; the system has rank \(66=\dim \cR_{120}\) and zero residual over the computed window.

Step (2) imposes the conditions of \cref{prop:torsion-vanishing} for the divisors \(\{3z=1\}\) and \(\{3z=\tau_1\}\) at the levels \(m\leq 14\), which determine \(W\) uniquely; the \(146\) conditions at the levels \(m=15,16\), withheld from the solve, vanish at the solution.

Step (3) divides \(G_0+\chi_{10}W=G_9\) by \(\chi_{10}^{\,4}\), the division being exact since the \(\chi_{10}\)-order of \(G_9\) is \(\nu(3)=4\), and converts by the inverse of \cref{lem:igusa-relations}.

A single prime suffices, the result being compared with the reduction of \(F_3\) rather than reconstructed over \(\Q\).
It agrees, in all \(521\) weighted-homogeneous \(J\)-coordinate coefficients, with the reduction of \(F_3\) interpolated independently from the family of \cite{2004-2}, which also checks the constants of \cref{eq:constants}.
The truncation argument and \cref{lem:level-lower} give \(12\leq m_0(3,120)\leq 14\).

\subsubsection{The case $n=5$}
At \(n=5\) the modular coefficient vector is obtained by interpolation.
The unknowns are the \(23650\) monomials of \cref{cor:support}(1), against the \(82189\) unrestricted weighted-homogeneous candidates of degree \(480\) in either system of invariant coordinates.
By \cref{lem:wellposed} the forms of degree \(480\) vanishing on \(\cL_5\) span a line, so the evaluation map has nullity one and \(23649\) points of \(\cL_5\) suffice to determine \(F_5\) up to a scalar.
Equivalently, they determine \(G_{25}\) up to a scalar through \cref{prop:scheme}(3), which preserves this coefficient vector.  The coefficient corresponding to \(J_{10}^{48}\), or \(\chi_{10}^{48}\) before multiplication by \(\chi_{10}^{\nu(5)}\), is nonzero by \cref{cor:j10-degree}; normalizing it to one leaves a square inhomogeneous system in \(23649\) unknowns.
Its rows are the values of those monomials at points of the family of \cref{lem:n5-cover} whose parameters are drawn at random from \(\F_p\), the coordinates being \(T=J_2\), \(\psi_4=J_2^2-24J_4\), \(\psi_6=J_2^3-36J_2J_4+216J_6\), \(\chi_{10}=J_{10}\) of \cref{eq:constants}.

The system is solved by blocked Gaussian elimination over \(\F_p\), one solve per prime, and the solution is required to annihilate \(256\) rows withheld from the solve and to vanish at \(640\) points not used in its construction; a nonzero residual at either aborts the run.
The matrix is assembled in \(12\) seconds, occupies \(4.17\) gibibytes, and one solve takes \(2.8\cdot 10^{3}\) seconds on a single core.
Fifty primes were used, at a cost of \(39\) core-hours; the coefficient matrix was invertible at every one, so each solve returns the modular coefficient vector corresponding to \(\overline{F}_5\), normalized by its leading coefficient, by \cref{prop:determination}(2).  Every run returns the same \(23612\) nonzero coefficients among the \(23650\) admissible monomials, and multiplication by \(\chi_{10}^{12}\) followed by \(T=\chi_{12}/\chi_{10}\) gives the corresponding representative of \(G_{25}\) with the same coefficients.  he result agrees with the equation of \cite{MSV09}.
The same program at \(n=2\), on the family of \cite{2000-2} with \(\degw F_2=30\) and \(\nu(2)=3\), returns \(24\) nonzero coefficients among the \(26\) monomials of \cref{cor:support}(1) and \(\deg_{J_{10}}F_2=3\), reproducing the equation of \cite{2000-2}.

The modular coefficients are recovered from their residues by rational reconstruction.  After primitive integral normalization they define
\[
\widehat{Q}\in\Z[T,\psi_4,\psi_6,\chi_{10}],
\]
supported on the monomials of \cref{cor:support}(1).  Applying the inverse of \cref{eq:constants}, clearing denominators, and normalizing the content gives a primitive \(\widehat{F}\in\Z[J_2,J_4,J_6,J_{10}]\) with
\[
\degw \widehat{F}=480,\qquad
\deg_{J_{10}}\widehat{F} =48,
\]
the values prescribed by \cref{thm:main} and \cref{cor:j10-degree}.
The reconstruction stabilized at forty-eight primes, contributing at most \(1440\) bits, and the solutions at the two remaining primes coincide with the reductions of the reconstructed vector; against the bound of \cref{lem:hadamard}, of order \(2.6\cdot 10^{8}\) bits, the reconstruction is therefore an early termination, and the identification of \(\widehat{F}\) with \(F_5\) is not a consequence of it.

The identification of \(F_5\) rests on a symbolic identity on the family of \cref{lem:n5-cover}.

\begin{lem}
\label{lem:F5-identification}
Let \(\widehat{F}\) be the polynomial returned by the interpolation above.
Then \(\widehat{F}=\pm F_5\).
\end{lem}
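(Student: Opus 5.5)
The plan is to certify \(\widehat{F}\) by a single exact symbolic identity, combined with the uniqueness statement of \cref{lem:wellposed}. That lemma says that a form in the span \(V\) of the monomials of \cref{cor:support}(1) which vanishes on \(\cL_5\) is a scalar multiple of \(F_5\). Since \(\widehat{F}\) is, by construction, supported on exactly those monomials (after the triangular change of coordinates \cref{eq:constants}, which preserves \(V\)), and has weighted degree \(480=\degw F_5\) by \cref{thm:main}, it suffices to prove that \(\widehat{F}\) vanishes on \(\cL_5\). Primitivity of \(\widehat{F}\) and of \(F_5\) in \(\Z[J_2,J_4,J_6,J_{10}]\) then forces the scalar to be \(\pm1\).

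To prove the vanishing, use the dominant family of \cref{lem:n5-cover}. The plan has three steps.

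\begin{enumerate}
\item Compute the Igusa invariants \(J_2,J_4,J_6,J_{10}\) of the sextic \(x(x-1)c(x)\), viewed as a binary sextic with a root at infinity. These are polynomials in \(\Q[\alpha,\beta]\); they do not involve \(\xi\), because \(c(x)\) depends only on \(\alpha,\beta\). Note that \(C_{\alpha,\beta,\xi}\) depends on \(\xi\) only through the target elliptic curve, so the moduli point lies in \(\Q(\alpha,\beta)\).
\item Substitute these invariants into \(\widehat{F}\) and expand exactly over \(\Z\) using FLINT.
\item Verify that the resulting polynomial in \(\alpha,\beta\) is identically zero.
\end{enumerate}

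Because the family is dominant for \(\cL_5\) (\cref{subsec:n5}), the vanishing of \(\widehat{F}\circ I\) on the parameter space gives vanishing on a dense subset of \(\cL_5\), hence on all of \(\cL_5\) by closedness. This is the converse direction of \cref{prop:determination}(1).

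The main obstacle is the size of the symbolic expansion. \(\widehat{F}\) has about \(7.7\cdot10^4\) terms, and \(J_{10}^{48}\) alone gives a bivariate polynomial of very high degree in \(\alpha,\beta\). A direct expansion may be prohibitive, so I would try the following reductions first.

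\begin{enumerate}
\item Work in the modular coordinates \(T,\psi_4,\psi_6,\chi_{10}\), where the support is smaller (\(23612\) terms).
\item Bound the total degree \(D\) in \((\alpha,\beta)\) of \(\widehat{F}(I(C_{\alpha,\beta}))\) a priori from the degrees of the \(J_i\).
\item Replace the full expansion by a restriction to many lines or curves \(\beta=\lambda\alpha+\mu\). A nonzero bivariate polynomial of degree \(\le D\) cannot vanish on more than \(D\) lines through generic directions, so checking the identity exactly over \(\Z\), as univariate polynomials, on \(D+1\) such lines is an unconditional proof.
\item Alternatively, expand along a Kronecker substitution \(\beta=\alpha^{D+1}\) in a single univariate multiplication.
\end{enumerate}

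With any of these options the verification becomes a finite exact computation whose correctness does not rely on the probabilistic interpolation.
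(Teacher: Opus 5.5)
Your overall argument is the paper's: \(\widehat{F}\in V\), vanishing of \(\widehat{F}\) on \(\cL_5\) through the dominant family of \cref{lem:n5-cover}, then \cref{lem:wellposed} and primitivity to force the scalar to be \(\pm1\). The gap is in step~1, which rests on a false claim: the cubic \(c\) is not a function of \(\alpha,\beta\) alone. Since \(x\,u_1(x)^2-u_2(x)^2=(x-1)\,u_3(x)^2\), the identity of \cref{lem:n5-cover} is equivalent to
\[
c(x)\,(x-\xi)^2=c_3\bigl(x\,u_1(x)^2-\kappa(\xi)\,u_2(x)^2\bigr).
\]
So the roots of \(c\), which are three of the six Weierstrass points of \(C_{\alpha,\beta,\xi}\), are the simple points of the fibre of \(\kappa\) over the fourth branch point \(\kappa(\xi)\). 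They change when \(\xi\) is replaced by the other root of \(u_4\). Comparing \(x^4\)-coefficients gives \(c_2/c_3=2(\alpha^2+2\alpha+2\beta)-(2\alpha+1)^2\kappa(\xi)+2\xi\). At \((\alpha,\beta)=(1,2)\), where \(u_4=3\xi^2-3\xi+8\), this equals \((566+97\xi)/121\).

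Your remark that the curve depends on \(\xi\) ``only through the target elliptic curve'' does not make it independent of \(\xi\): that target is the Legendre curve with parameter \(\kappa(\xi)\), which moves with the root. Hence \(J_2,\dots,J_{10}\) lie in \(\Q(\alpha,\beta)[\xi]/(u_4(\xi))\), not in \(\Q[\alpha,\beta]\), and step~1 as written cannot be carried out. Any \(\xi\)-free sextic you substituted would test the identity on curves that are not members of the family.

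The repair is exactly what the paper does. Carry out the substitution in \(\Q[\alpha,\beta][\xi]/(u_4(\xi))\) and reduce the result to the form \(P_0+P_1\xi\), with coefficients of degree at most \(4320\) in \((\alpha,\beta)\). Then check that both \(P_0\) and \(P_1\) vanish. This gives vanishing at every point of the double cover \(\{u_4(\xi)=0\}\) that parametrizes the family, and the rest of your argument goes through unchanged. Your size-reduction devices (restriction to \(D+1\) distinct lines, or a Kronecker substitution) remain valid if applied to \(P_0\) and \(P_1\) separately. They are optional, though: the paper's direct expansion was feasible.
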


\begin{proof}
By construction \(\widehat{F}\) is primitive, has weighted degree \(480=\degw F_5\), and its image under \cref{lem:igusa-relations} is supported on the monomials of \cref{cor:support}(1); hence \(\widehat{F}\) lies in the space \(V\) of \cref{lem:wellposed}.
Substituting the invariants of the curves \(C_{\alpha,\beta,\xi}\) of \cref{lem:n5-cover} and expanding over \(\Q\) gives
\begin{equation}
\label{eq:F5-identity}
\widehat{F}\bigl(I(C_{\alpha,\beta,\xi})\bigr)=0 \quad\text{in } \Q[\alpha,\beta][\xi]/(u_4(\xi)) ;
\end{equation}
after reduction modulo \(u_4\) the left side is linear in \(\xi\) with coefficients of degree at most \(4320\) in \((\alpha,\beta)\), and every one of them vanishes.
Hence \(\widehat{F}\) vanishes at \(I(C_{\alpha,\beta,\xi})\) for every \((\alpha,\beta,\xi)\) at which the family is defined.
That family is dominant for \(\cL_5\) by \cref{lem:n5-cover}, so \(\widehat{F}\) vanishes on a dense subset of \(\cL_5\), hence on \(\cL_5\).
By \cref{lem:wellposed}, \(\widehat{F}=cF_5\) with \(c\in\C^\times\), and both polynomials are primitive in \(\Z[J_2,J_4,J_6,J_{10}]\), so \(c=\pm 1\).
\end{proof}

\begin{thm}[The exact form \(G_{25}\)]
\label{thm:G25-computation}
In the normalization of \cref{eq:constants}, a primitive integral representative of the exact Humbert form
\[
G_{25}\in M_{600}^{\mathrm{even}}
   =\C[\psi_4,\psi_6,\chi_{10},\chi_{12}]_{600}
\]
has \(23612\) nonzero coefficients among the \(23650\) monomials permitted by \cref{cor:support}.  Its inverse image under \cref{prop:scheme}(3), after primitive integral normalization, is \(\pm F_5\).
\end{thm}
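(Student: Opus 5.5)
The plan is to assemble \cref{thm:G25-computation} from the pieces already in place rather than to prove anything new. Start from the output of the interpolation: the primitive \(\widehat{Q}\in\Z[T,\psi_4,\psi_6,\chi_{10}]\) supported on the monomials of \cref{cor:support}(1) with \(d=480\), \(\nu=\nu(5)=12\), and its image \(\widehat{F}\) under the inverse of \cref{eq:constants}. By \cref{lem:F5-identification}, \(\widehat{F}=\pm F_5\). By \cref{lem:igusa-relations} together with \cref{eq:constants}, \(\widehat{Q}\) is a nonzero rational multiple of \(\Theta(F_5)=F_5(\tau)\).

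By \cref{thm:main}, \(G_{25}=c\,\chi_{10}^{12}F_5(\tau)\). So \(\chi_{10}^{12}\widehat{Q}\) is a scalar multiple of \(G_{25}\), of weight \(480+120=600=k(H_{25})\).

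To pass from \(\widehat{Q}\) to \(G_{25}\), rewrite each monomial \(T^{t}\psi_4^a\psi_6^b\chi_{10}^{u}\) of \(\widehat{Q}\) after multiplying by \(\chi_{10}^{12}\). Substituting \(T=\chi_{12}/\chi_{10}\) gives \(\psi_4^a\psi_6^b\chi_{10}^{12+u-t}\chi_{12}^{t}\). The support condition \(u\geq t-12\) of \cref{cor:support}(1) makes the exponent of \(\chi_{10}\) nonnegative, so the result lies in \(\C[\psi_4,\psi_6,\chi_{10},\chi_{12}]_{600}\). As in the proof of \cref{thm:fj-order}, the map \((t,u)\mapsto(12+u-t,t)\) is injective, so distinct monomials stay distinct and no cancellation occurs. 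Hence the representative has exactly the same coefficient vector as \(\widehat{Q}\), and it inherits integrality and primitivity from \(\widehat{Q}\). Its number of nonzero coefficients is therefore the count \(23612\) recorded by the computation, out of the \(N_5=23650\) admissible monomials.

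The last sentence of the statement follows by reversing these steps, which is \cref{prop:scheme}(3). Dividing by \(\chi_{10}^{12}\) and applying the inverse of \cref{lem:igusa-relations} returns a constant multiple of \(F_5\); normalizing the content gives \(\pm F_5\), as \cref{lem:F5-identification} already shows.

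The only genuine obstacle is certifying that \(\widehat{F}\) really is \(F_5\) and not an artifact of modular reconstruction stopped early. This cannot come from the prime count, since the bound of \cref{lem:hadamard} far exceeds the number of primes used. It rests entirely on the exact symbolic identity \cref{eq:F5-identity} in \cref{lem:F5-identification}, combined with dominance of the family (\cref{lem:n5-cover}) and the one-dimensionality of \cref{lem:wellposed}. With that lemma assumed, the rest is bookkeeping.
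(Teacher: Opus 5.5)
Your proposal is correct and follows essentially the same route as the paper. Both arguments form \(\chi_{10}^{12}\widehat{Q}(\chi_{12}/\chi_{10},\psi_4,\psi_6,\chi_{10})\), use the support inequality of \cref{cor:support}(1) to land in \(\Z[\psi_4,\psi_6,\chi_{10},\chi_{12}]_{600}\) with the same primitive coefficient vector, identify the result with a scalar multiple of \(G_{25}\) via \cref{lem:F5-identification} and \(G_{25}=c\,\chi_{10}^{12}F_5(\tau)\), and take the term count \(23612\) from the computation; your explicit no-cancellation check via injectivity of the exponent map is only a detail the paper leaves implicit in ``has the same coefficient vector.''
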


\begin{proof}
The support inequality in \cref{cor:support}(1) shows that
\[
\widehat{G}:=
\chi_{10}^{12}\,
\widehat{Q}\bigl(\chi_{12}/\chi_{10},\psi_4,\psi_6,\chi_{10}\bigr)
\]
lies in the polynomial ring \(\Z[\psi_4,\psi_6,\chi_{10},\chi_{12}]\), has weight \(600\), and has the same coefficient vector as \(\widehat{Q}\).  By \cref{lem:F5-identification}, the inverse image of this vector is \(\pm F_5\).  The identity \(G_{25}=c\chi_{10}^{12}F_5(\tau)\) of \cref{cor:exact-form,thm:main} therefore shows that \(\widehat{G}\) is a nonzero scalar multiple of the exact Humbert form.  Primitive integral normalization leaves only its sign undetermined.  The term count is the one returned at every prime in the reconstruction above.
\end{proof}

The identification is over \(\Q\); neither the number of primes nor a bound on the coefficient height enters it.  The symbolic expansion of \cref{eq:F5-identity} and the modular linear algebra share no code, only Igusa's expressions for \(J_2,J_4,J_6,J_{10}\) in the coefficients of \cref{affine:C}.   The primitive forms of \(G_{25}\) and \(F_5\), the coordinate conventions, the exact comparison with \(F_5^{\mathrm{IC}}\), and verification values and hashes accompany this article as  ancillary files.

\subsection{Coordinate conversion and independent verification}
\label{subsec:conversion}

Next  we denote the primitive polynomial computed independently in \cite{SCF26} by  \(F_5^{\mathrm{IC}}\in\Z[I_2,I_4,I_6,I_{10}]\), using the families of \cite{Kum15} and rational reconstruction by lattice reduction.  With signs normalized compatibly, exact expansion gives
\[
F_5^{\mathrm{IC}}\bigl(8J_2,\,4J_2^2-96J_4,\, 8J_2^3-160J_2J_4-576J_6,\,4096J_{10}\bigr)=2^{510}3^{126}  F_5(J_2,J_4,J_6,J_{10}).
\]
The two computations use different dominant families and different reconstruction procedures, so this polynomial identity independently verifies both.  The modular-to-Igusa substitution of \cref{eq:constants} has content \(2^{170}3^{126}\); after its removal the resulting polynomial is the primitive \(F_5\) in the \(J\)-coordinates.

A comparison with \cite{MSV09} would not be independent in this sense. That computation uses
the same parametrization as \cref{lem:n5-cover} and the same invariant coordinates, so
agreement would confirm the arithmetic without testing the method.

The same exact conversions can be made for every previously computed equation.  For each \(n\), let \(F_n^{\mathrm{IC}}\) denote the primitive representative in Igusa--Clebsch coordinates, and for a primitive integral polynomial \(P\), write \(h(P)\) for the maximum binary length of its coefficients.  The results are compared in \cref{tab:forms}.  In the modular row the term count and height refer to the primitive coefficient vector of \(G_{n^2}\); equivalently, they refer to the image of \(F_n\) in the reduced basis of \cref{cor:support}, since \cref{prop:scheme}(3) preserves the coefficient vector.

\begin{table}[ht]
\caption{Term counts and coefficient heights in three coordinate systems. In the modular coordinates the data refer to the primitive integral representative of \(G_{n^2}\); in the other two rows they refer to the primitive equation \(F_n\). The height is the number of binary digits of the largest coefficient.}
\label{tab:forms}
\centering
\begin{tabular}{clrr}
\hline
\(n\) & coordinates & terms & height \\
\hline
2 & \(\psi_4,\psi_6,\chi_{10},\chi_{12}\) & 24 & 52 \\
  & \(I_2,I_4,I_6,I_{10}\)                 & 34 & 37 \\
  & \(J_2,J_4,J_6,J_{10}\)                 & 29 & 26 \\
\hline
3 & \(\psi_4,\psi_6,\chi_{10},\chi_{12}\) & 200 & 172 \\
  & \(I_2,I_4,I_6,I_{10}\)                 & 318 & 132 \\
  & \(J_2,J_4,J_6,J_{10}\)                 & 438 & 109 \\
\hline
4 & \(\psi_4,\psi_6,\chi_{10},\chi_{12}\) & 1565 & 463 \\
  & \(I_2,I_4,I_6,I_{10}\)                 & 2699 & 367 \\
  & \(J_2,J_4,J_6,J_{10}\)                 & 4371 & 322 \\
\hline
5 & \(\psi_4,\psi_6,\chi_{10},\chi_{12}\) & 23612 & 1268 \\
  & \(I_2,I_4,I_6,I_{10}\)                 & 43410 & 1032 \\
  & \(J_2,J_4,J_6,J_{10}\)                 & 76713 & 898 \\
\hline
\end{tabular}
\end{table}

Two features of \cref{tab:forms} are relevant to the reconstruction.  First, the modular representative is the sparsest in every case and nearly saturates the support permitted by \cref{cor:support}: the occupancies for \(n=2,3,4,5\) are
\[
\frac{24}{26},\qquad \frac{200}{206},\qquad
\frac{1565}{1579},\qquad \frac{23612}{23650},
\]
respectively.  Thus the gain comes primarily from the support theorem, which removes inadmissible monomials before the linear algebra, rather than from zeros inside the admissible coefficient vector.

Second, conversion away from the modular generators generally trades sparsity for smaller coefficients.  For \(n=3,4,5\) the number of terms increases strictly from the modular to the Igusa--Clebsch to the Igusa coordinates, while the height decreases strictly.  At \(n=2\) the Igusa form, with \(29\) terms, is sparser than the Igusa--Clebsch form, with \(34\), but the height still decreases in the same order \(52>37>26\).  This behavior is explained by the triangular non-unimodular coordinate substitutions: expansion creates new monomials, whereas removal of the resulting common content lowers the height of the primitive integral form.

The table therefore supports reconstructing \(G_{n^2}\) first and converting to invariants only afterward.  At \(n=5\), the modular support reduces the number of unknowns by the factor \(82189/23650\simeq3.47\), corresponding to factors about \(12.1\) in dense-matrix memory and \(42.0\) in cubic-time field operations.  The larger modular height, however, means that the support reduction does not automatically reduce the number of primes needed for rational reconstruction.

\subsection{Reduction modulo primes}
\label{subsec:modp}

The equation retains its meaning after reduction modulo \(p\): its vanishing detects \((n,n)\)-split ordinary Jacobians over finite fields.
The statement is independent of the computation above, and the supersingular case is not addressed.

\begin{prop}\label{prop:modp-detection}
Let \(n\geq 2\), \(p\) be a prime outside a finite set depending on \(n\), and \(C\) be a genus-two curve over \(\overline{\F}_p\).

\begin{enumerate}
\item If \(\overline{F}_n\bigl(I(C)\bigr)=0\), then \(\Jac(C)\) admits an \((n,n)\)-isogeny to a product of elliptic curves.

\item If \(\Jac(C)\) is ordinary and admits an optimal \((n,n)\)-isogeny to a product of elliptic curves, that is, one whose associated degree-\(n\) covering \(C\to E\) is maximal in the sense of \cref{defn:subcover}, then \(\overline{F}_n\bigl(I(C)\bigr)=0\).
\end{enumerate}
\end{prop}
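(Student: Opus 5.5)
The plan is to spread the result over the arithmetic model of \(\cA_2\) and \(\cM_2\) over \(\Z[1/N]\), and to use the fact that \(F_n\) cuts out, away from finitely many primes, the reduction of a flat closed subscheme whose fibres are the moduli of \((n,n)\)-split Jacobians. Concretely, let \(\mathcal{Z}_n\) be the moduli stack over \(\Z[1/n]\) of triples \((A,E_1\times E_2,\phi)\) with \(\phi\colon E_1\times E_2\to A\) an isogeny with kernel isotropic of type \((n,n)\) for the product polarization \(n\) times the principal one. Its image in \(\cA_2\) is proper over the base, being the image of a proper (finite over \(\cA_1\times\cA_1\)) stack. Over \(\Q\) that image is the union of the \(H_{m^2}\) for \(m\mid n\) together with \(H_1\). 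Up to \(J_{10}\)-factors, it is the zero locus of \(\prod_{m\mid n}F_m\).

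\textbf{Step 1 (part 1).} Choose \(N\) so that several things hold over \(\Z[1/N]\). First, \(F_n\) has unit leading \(J_{10}\)-coefficient, as in \cref{cor:j10-degree}. Second, the closure \(\mathcal{V}_n=V(F_n)\cap\cM_{2,\Z[1/N]}\) is flat with geometrically irreducible fibres; the irreducibility of \(\overline{F}_n\) for almost all \(p\) follows from Bertini/Noether-type results on reduction of absolutely irreducible polynomials. Third, \(\mathcal{V}_n\) coincides with the closure of the generic fibre of the image of \(\mathcal{Z}_n\) restricted to the \(\cL_n\) component. Since the image of \(\mathcal{Z}_n\) is closed, and \(\mathcal{V}_n\) is the flat closure of its generic-fibre component, every \(\overline{\F}_p\)-point of \(\mathcal{V}_n\) lies in the image of \(\mathcal{Z}_n\). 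So \(\overline{F}_n(I(C))=0\) yields an \((n,n)\)-isogeny to a product. The one subtlety is that the special fibre of the image of \(\mathcal{Z}_n\) might acquire extra components. That is harmless for (1), which only needs containment \(\mathcal{V}_{n,p}\subseteq\operatorname{im}\mathcal{Z}_{n,p}\), and this follows from closedness and flatness.

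\textbf{Step 2 (part 2).} Part (2) requires the converse containment on the ordinary locus, and this is where I expect the main obstacle: excluding new components in characteristic \(p\). I would use Serre--Tate canonical lifts. Let \(\Jac(C)\) be ordinary with an optimal \((n,n)\)-isogeny, equivalently a maximal degree-\(n\) cover \(C\to E\). Then \(\Jac(C)\), \(E\), and the isogeny (of degree prime to \(p\)) lift to the canonical lift over \(W(\overline{\F}_p)\), since homomorphisms lift to canonical lifts. The lifted Jacobian is principally polarized and not a product: being a product reduces to a product, and \(C\) is smooth. It is therefore a Jacobian \(\Jac(\widetilde{C})\). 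The lifted isogeny still has kernel of type \((n,n)\), and it is optimal, because any factorization would specialize to a factorization of the reduction, the kernel being étale for \(p\nmid n\). Hence \(\widetilde{C}\) in characteristic zero lies in \(\cL_n\), by the characterization \(\cL_n=\iota^{-1}(H_{n^2})\) of \cref{sec-2}. So \(F_n(I(\widetilde{C}))=0\), and reducing modulo \(p\) gives \(\overline{F}_n(I(C))=0\), using that \(F_n\) has \(p\)-integral coefficients and that \(I\) commutes with reduction (since \(p\ne 2\) is excluded).

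\textbf{Remaining checks.} The finite exceptional set consists of the primes dividing \(2n\), the content issues and \(L\) of \cref{prop:determination}, and those where \(\overline{F}_n\) stops defining the reduced reduction. The last item, identifying \(V(\overline{F}_n)\) with the special fibre of the closure, is where I would spend the most care. I would first try to show that \(F_n\) generates the ideal of \(\mathcal{V}_n\) over \(\Z[1/N]\), using primitivity and the UFD property of \(\Z[1/N][J_2,J_4,J_6,J_{10}]\).
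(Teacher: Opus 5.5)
Your outline is essentially the paper's proof. In (1) you use the same inputs. Primitivity of $F_n$ makes $\Z[1/N][J_2,J_4,J_6,J_{10}]/(F_n)$ torsion-free, so $V(F_n)$ is flat and equals the closure of its characteristic-zero fibre; and $(n,n)$-splittings specialize. You package the specialization as closedness of the image of $\mathcal{Z}_n$, whereas the paper checks it along a single trait (trait selection, stable reduction and separatedness, N\'eron--Ogg--Shafarevich, N\'eron mapping property). In (2) your argument is the paper's: canonical lifts, indecomposability of the lifted principally polarized surface, Torelli, and \'etaleness of kernels of degree dividing $n$ to keep the cover maximal.

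Two steps need repair.

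\textbf{Closedness of the image of $\mathcal{Z}_n$.} Your justification is wrong as stated. $\cA_1\times\cA_1$ is not proper over $\Z[1/N]$, so a stack finite and surjective over it is not proper over the base either. What you actually need is that $\mathcal{Z}_n\to\cA_2$ is proper, and this is true. One way: an $(n,n)$-splitting of $A$ is a Lagrangian subgroup of $A[n]$ with decomposable quotient, which is a closed condition on a finite \'etale cover of $\cA_2$ over $\Z[1/n]$. Another way is the valuative criterion, which is exactly the paper's N\'eron--Ogg--Shafarevich and N\'eron-mapping argument.

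\textbf{Good reduction of $\widetilde C$ in (2).} The assertion that $I$ commutes with reduction presupposes that $\widetilde C$ has good reduction with special fibre $C$. Knowing that $\Jac(\widetilde C)$ reduces to $\Jac(C)$ is not by itself a statement about the curves. The paper obtains this from stable reduction and separatedness. Equivalently, the stable model of $\widetilde C$ has special fibre of compact type whose polarized Jacobian is $\Jac(C)$. Since $\Jac(C)$ is indecomposable, that fibre is smooth and isomorphic to $C$ by Torelli.

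Finally, the unit leading $J_{10}$-coefficient and the geometric irreducibility of the fibres play no role. The concern in your last paragraph is also not an issue. By the torsion-freeness above, the special fibre of $V(F_n)$ is $V(\overline{F}_n)$ by definition, so no comparison with a ``reduced reduction'' is required.
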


\begin{proof}
Enlarge the excluded finite set of primes so that \(p\nmid 2n\) and
the invariant description of \(\cM_2\), the Torelli morphism, and the
characteristic-zero identification
\(\{F_n=0\}\cap\iota(\cM_2)=\iota(\cL_n)\) are defined over
\(\Z[1/N]\) for some \(N=N(n)\) not divisible by \(p\).
After replacing the field of definition by a finite extension of
\(\F_p\), we may assume that \(C\), and in (2) the given isogeny and
covering, are defined over a finite field \(k\).

For (1), let
\(X=\{F_n=0\}\subset\bP(2,4,6,10)_{\Z[1/N]}\).
Since \(F_n\) is primitive, the graded ring
\(\Z[1/N][J_2,J_4,J_6,J_{10}]/(F_n)\) is torsion-free over
\(\Z[1/N]\). Thus \(X\) is flat over \(\Z[1/N]\); it is proper because
it is closed in weighted projective space.
Let \(x=I(C)\in X(k)\). Flatness implies that the generic fiber of
\(X\) is schematically dense, so a standard trait-selection argument
gives, after replacing \(k\) and the fraction field by finite
extensions, a complete discrete valuation ring \(R\) of mixed
characteristic and a morphism
\(\Spec R\to X\) whose closed point is \(x\) and whose generic point
lies in \(X_{\Q}\).

Since \(J_{10}(C)\neq 0\), the morphism \(\Spec R\to X\) factors
through the open set \(\{J_{10}\neq 0\}\). Its generic point therefore
belongs to
\(\{F_n=0\}\cap\iota(\cM_2)=\iota(\cL_n)\).
After a finite extension of the fraction field \(K=\Frac(R)\), this
point is represented by a smooth genus-two curve
\(\widehat C/K\) belonging to \(\cL_n\).

After a further finite extension, the stable-reduction theorem
\cite{DeligneMumford} gives a stable model of \(\widehat C\) over
\(R\). The moduli map of this stable model and the given map
\(\Spec R\to\cM_2\) agree on the generic point, and hence agree
everywhere by separatedness of the moduli space of stable curves
\cite{DeligneMumford}. In particular, the special fiber is smooth
and has moduli point \(I(C)\), so it is geometrically isomorphic to
\(C\). Thus \(\widehat C\) has good reduction with special fiber \(C\).

Because \(\widehat C\in\cL_n\), there is an \((n,n)\)-isogeny
\(\widehat\phi:\Jac(\widehat C)\to\widehat E_1\times\widehat E_2\).
The Jacobian has good reduction, and good reduction is preserved under
isogeny by the Néron--Ogg--Shafarevich criterion
\cite{SerreTate}. After another finite extension, both
\(\widehat E_i\) therefore have good reduction. The homomorphism
\(\widehat\phi\) extends uniquely to the corresponding abelian schemes
over \(R\) by the Néron mapping property
\cite[Ch.~1, \S2]{BLR}. Moreover, the identity
\(\widehat\phi^{*}(\lambda_{\widehat E_1}\boxplus
\lambda_{\widehat E_2})=n\lambda_{\Jac(\widehat C)}\) extends and may
be specialized. Its special fiber is consequently an
\((n,n)\)-isogeny
\(\Jac(C)\to E_1\times E_2\), proving (1).

For (2), write \(A=\Jac(C)\) and let
\(\phi:A\to E_1\times E_2\) be the given optimal
\((n,n)\)-isogeny. Since ordinarity is invariant under isogeny, the
elliptic curves \(E_1\) and \(E_2\) are ordinary. Let
\(\widetilde A\), \(\widetilde E_1\), and \(\widetilde E_2\) be their
canonical lifts over \(W(k)\). The canonical lifting functor for
ordinary abelian varieties is fully faithful and is compatible with
duals, products, and polarizations \cite{DeligneOrdinary}. Hence
\(\phi\) lifts uniquely to a homomorphism
\(\widetilde\phi:\widetilde A\to
\widetilde E_1\times\widetilde E_2\), and the identity
\(\phi^{*}(\lambda_{E_1}\boxplus\lambda_{E_2})=n\lambda_A\) lifts to
the corresponding identity for \(\widetilde\phi\). Thus
\(\widetilde\phi\) is again an \((n,n)\)-isogeny.

The generic fiber of the principally polarized surface
\(\widetilde A\) is indecomposable. Indeed, the decomposable locus
\(H_1\) is closed, so a decomposition of the generic fiber would
specialize to a decomposition of \(A\), whereas the principally
polarized Jacobian of a smooth genus-two curve is indecomposable.
It follows from the characteristic-zero Torelli theorem that, after
a finite extension of \(\Frac(W(k))\), the generic fiber of
\(\widetilde A\) is the Jacobian of a smooth genus-two curve
\(\widehat C\). Applying stable reduction and separatedness as above
shows that \(\widehat C\) has good reduction with special fiber
geometrically isomorphic to \(C\).

It remains to check that the covering associated with
\(\widetilde\phi\) is maximal. Let
\(\widehat\pi:\widehat C\to\widetilde E_1\) be this covering. If
\(\widehat\pi\) factored through an isogeny
\(\alpha:\widehat E'\to\widetilde E_1\) of degree \(d>1\), then
\(d\mid n\), and hence \(p\nmid d\). Since \(\widehat E'\) is
isogenous to an elliptic curve with good reduction, it also has good
reduction. The factorization extends to the corresponding abelian
schemes and specializes to a factorization of \(C\to E_1\) through
an isogeny of the same degree \(d>1\), contradicting the maximality
of the original covering. Thus \(\widehat\pi\) is maximal and
\(\widehat C\in\cL_n\).

Consequently \(F_n\bigl(I(\widehat C)\bigr)=0\). The invariant point
of the smooth model of \(\widehat C\) specializes to \(I(C)\), so
reducing this equality modulo \(p\) gives
\(\overline{F}_n\bigl(I(C)\bigr)=0\), proving (2).
\end{proof}


\subsection{Arithmetic size and the next case \texorpdfstring{\(G_{49}\)}{G49}}
\label{subsec:size}

Unrestricted interpolation in invariant coordinates, elimination of the parameters of a family, or a Gr\"obner basis computation for the ideal of \(\cL_n\) is infeasible already at \(n=5\).
\Cref{thm:main} prescribes \(\degw F_n\) in advance and \cref{cor:support} the modular monomials that can occur, so every dimension in \cref{tab:sizes} is known before the computation begins.
The height of the resulting primitive modular coefficient vector is not known in advance, and it governs the number of primes.  We denote it by \(h(G_{n^2})\), as in \cref{tab:forms}; multiplication by \(\chi_{10}^{\nu(n)}\) and the substitution \(T=\chi_{12}/\chi_{10}\) preserve the coefficients.

Dense elimination over \(\F_p\) in \(N\) unknowns costs \(O(N^{3})\) field operations and occupies \(8N^{2}\) bytes at a word size of sixty-four bits.
At \(n=5\) the interpolation solved above has \(N=N_5-1=23649\) unknowns and occupies \(4.17\) gibibytes; at \(n=7\) the value \(N_7-1=526734\) gives \(2.0\) tebibytes, and one solve costs \(1.1\cdot 10^{4}\) times the arithmetic of one solve at \(n=5\).
Each prime of the reconstruction requires one such solve.  A computation at \(n=7\) therefore requires matrix-free or structured linear algebra, or an effective use of the sparse torsion conditions, rather than a direct repetition of the dense \(n=5\) solve.

\begin{rem}[The next case \(G_{49}\)]
\label{rem:G49}
For \(n=7\), \cref{tab:sizes} gives \(k(H_{49})=1680\), \(\degw F_7=1440\), and \(N_7=526735\) admissible modular monomials, against \(2121445\) unrestricted weighted-homogeneous candidates.  The occupancies in \cref{tab:forms} increase from \(24/26\) at \(n=2\) to \(23612/23650\) at \(n=5\).  This suggests, but does not prove, that the primitive form \(G_{49}\) will be nearly dense in its admissible support.  One should therefore not expect sparsity among its nonzero coefficients to reduce the problem substantially below \(N_7\).

The modular support nevertheless reduces the dimension by a factor \(2121445/526735\simeq4.03\), which corresponds to factors about \(16.2\) in dense-matrix memory and \(65.3\) in cubic-time field operations.  Even after this reduction, a dense matrix requires about \(2.0\) tebibytes.  Moreover, the heights in \cref{tab:forms} do not justify a numerical prediction for \(h(G_{49})\), so the number of primes required for rational reconstruction remains unknown.

The case \(n=7\) has a compensating geometric advantage: it is the first case in the generic odd-degree Frey--Kani ramification pattern in which the fourth and fifth branch points are distinguished, having profiles \((2)^2\) and \((2)\), respectively, whereas both have profile \((2)\) for \(n=5\); the full degree-seven map \(\varphi:\bP^1\to\bP^1\) is displayed explicitly in \cite{2000-1}.
The map can be used both to generate exact points of \(\cL_7\) and to establish the covering by a polynomial identity, analogous to the role of \cref{lem:n5-cover} at \(n=5\).  It therefore makes matrix-free evaluation or structured interpolation a natural approach to \(G_{49}\).  It does not by itself lower \(N_7\) or imply exploitable structure in the evaluation matrix; the remaining problem is to extract such structure from the evaluations of the admissible modular monomials.
\end{rem}

Normalize the modular coefficient vector so that the entry corresponding to \(J_{10}^{\,d/10}\), a nonzero constant by \cref{cor:j10-degree}, equals one, where \(d=\degw F_n\).
Each coefficient is then a rational \(a/b\) in lowest terms whose numerator and denominator are bounded by coefficients of the primitive integral representative of \(G_{n^2}\), so \(|a|\leq 2^{h(G_{n^2})}\) and \(b\leq 2^{h(G_{n^2})}\).
Rational reconstruction from a modulus \(Q\) returns \(a/b\) as soon as \(Q>2|a|b\), so
\[
\Bigl\lceil\frac{2h(G_{n^2})+1}{\log_2 p}\Bigr\rceil
\]
primes suffice, and knowledge of the leading coefficient of the primitive form halves this count, the coefficients then being integers of at most \(h(G_{n^2})\) binary digits.
At \(n=5\), with \(h(G_{25})=1268\) and \(\log_2 p\) near \(30\), the two counts are \(85\) and \(43\), against the forty-eight primes entering the reconstruction.

\begin{lem}
\label{lem:hadamard}
Let \(n\geq 2\) and \(N=N_n\). Let \(P_1,\dots,P_{N-1}\in\cL_n(\Q)\), each represented by a quadruple of integral weighted coordinates, let \(D\) be the \((N-1)\times N\) integer matrix whose entry \(D_{ij}\) is the value at \(P_i\) of the \(j\)-th monomial of \cref{cor:support}(1), and assume \(\rank D=N-1\). Set \(B=\max_{i,j}|D_{ij}|\). Then
\[
h(G_{n^2})\;\leq\;(N-1)\,\log_2\bigl(B\sqrt{N-1}\bigr)+1.
\]
\end{lem}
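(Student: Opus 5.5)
The plan is to apply Cramer's rule to the kernel of \(D\) and bound each entry with Hadamard's inequality. Since \(\rank D=N-1\) and \(D\) has \(N\) columns, the kernel of \(D\) over \(\Q\) is one-dimensional. For each \(j\), let \(\Delta_j\) be the determinant of the \((N-1)\times(N-1)\) minor obtained by deleting column \(j\). The vector \(v=(v_j)\) with \(v_j=(-1)^j\Delta_j\) lies in this kernel, because expanding along a duplicated row gives \(Dv=0\). It is nonzero, because some maximal minor is nonzero by the rank assumption.

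Next we relate \(v\) to the coefficient vector of \(G_{n^2}\). Each \(P_i\) lies in \(\cL_n(\Q)\), and by \cref{cor:support}(1) the form \(F_n\), written in the reduced modular basis, is supported on the \(N\) monomials indexing the columns of \(D\). Its coefficient vector \(c\) therefore satisfies \(Dc=0\), using the identification of \cref{prop:scheme}(3), which preserves the coefficient vector. One-dimensionality of the kernel gives \(c=\lambda v\). Let \(g\) be the primitive integral representative, which is the coefficient vector of \(G_{n^2}\) by \cref{prop:scheme}(3) and \cref{thm:G25-computation}'s convention. Then \(g=v/\gcd(v)\) up to sign, because \(v\) is an integer vector proportional to \(g\) and \(g\) is primitive. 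Hence \(|g_j|\leq|v_j|=|\Delta_j|\) for every \(j\).

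It remains to bound each \(\Delta_j\). The minor has \(N-1\) rows, each with \(N-1\) entries of absolute value at most \(B\), so each row has Euclidean norm at most \(B\sqrt{N-1}\). Hadamard's inequality gives
\[
|\Delta_j|\leq\bigl(B\sqrt{N-1}\bigr)^{N-1}.
\]
The binary length of an integer \(a\neq0\) is \(\lfloor\log_2|a|\rfloor+1\leq\log_2|a|+1\), so
\[
h(G_{n^2})\leq(N-1)\log_2\bigl(B\sqrt{N-1}\bigr)+1.
\]

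The only delicate step is the identification in the second paragraph. We must justify that the matrix \(D\) is built in the same basis, and with the same integral weighted coordinates, as the coefficient vector that defines \(h(G_{n^2})\). We also need the vanishing \(Dc=0\) to hold exactly over \(\Q\), which it does because \(F_n\) vanishes on \(\cL_n\). A choice of scaling of the integral coordinates of the \(P_i\) only rescales the rows of \(D\), since each column monomial is weighted-homogeneous of degree \(d\). So it changes neither the kernel nor the conclusion.
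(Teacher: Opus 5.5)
Your proposal is correct and is essentially the paper's proof: the one-dimensional kernel of \(D\) is spanned by the signed maximal-minor vector from Cramer's rule, primitivity of the modular coefficient vector gives \(|c_j|\leq|\det D^{(j)}|\), and Hadamard's inequality applied to rows of norm at most \(B\sqrt{N-1}\) finishes. Your additional remarks, on identifying the coefficient vector via \cref{prop:scheme}(3) and on rescaling the weighted coordinates, are consistent with the paper and simply spell out what it asserts without comment.
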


\begin{proof}
The modular coefficient vector \(c\) corresponding to \(G_{n^2}\) lies in the kernel of \(D\), which is one-dimensional.
Let \(D^{(j)}\) be \(D\) with the \(j\)-th column deleted and let \(v\) be the integer vector with \(v_j=(-1)^{j}\det D^{(j)}\).
By Cramer's rule \(v\) lies in the kernel of \(D\) and is nonzero, so \(v\) spans it.
The vector \(c\) is primitive, so \(v=mc\) for some nonzero \(m\in\Z\), and \(|c_j|\leq|\det D^{(j)}|\) for every \(j\).
Each row of \(D^{(j)}\) has Euclidean norm at most \(B\sqrt{N-1}\), so Hadamard's inequality gives \(|\det D^{(j)}|\leq\bigl(B\sqrt{N-1}\bigr)^{N-1}\). The number of binary digits of an integer of absolute value at most \(X\) is at most \(\log_2 X+1\).
\end{proof}

\Cref{lem:hadamard} is far from sharp.
At \(n=5\) the points supplied by \cref{lem:n5-cover} with parameters of small height have \(\log_2 B\) of order \(1.1\cdot 10^{4}\), the largest entry being \(\chi_{10}^{48}\), so the bound is of order \(2.6\cdot 10^{8}\) bits, against the value \(h(G_{25})=1268\) recorded in \cref{tab:forms}.
The bound also depends on the points, not on \(n\) alone.

The modular height is thus known only where the exact Humbert form has been computed, and no bound on \(h(G_{n^2})\) in terms of \(n\) follows from the results above.  The size of the linear algebra is prescribed, while the number of primes required for reconstruction is not.  The proof of \cref{thm:G25-computation} does not require an a priori height bound or exhaustion of the sufficient prime count above.

Explicit dominant families are available for every \(n\leq 11\) \cite{Kum15}, so
\cref{prop:determination} applies throughout the range accessible to dense linear algebra. For
\(n\) without a known dominant family, the uniqueness of a truncated torsion system in the
sense of \cref{subsec:scheme} replaces the family.